\newtheorem{theorem}{Theorem}
\newtheorem{lemma}{Lemma}
\newtheorem{proposition}{Proposition}
\newcommand{\ad}{\,\mathrm{ad}\,}
\newcommand{\GL}{\,\mathrm{GL}\,}
\newcommand{\SL}{\,\mathrm{SL}\,}
\newcommand{\diag}{\,\mathrm{diag}\,}
\begin{document}

\begin{center}

{\Large {\bf Automorphisms of Chevalley groups of type $B_l$\\

\medskip

 over local rings with $1/2$\footnote{The
work is supported by the Russian President grant MK-2530.2008.1 and
by the grant of Russian Fond of Basic Research 08-01-00693. }} }

\bigskip

{\large \bf E.~I.~Bunina}

\end{center}
\bigskip

\begin{center}

{\bf Abstract.}

\end{center}

In the given paper we prove that every automorphism of a Chevalley group of type $B_l$, $l\geqslant 2$, over a commutative local ring with~$1/2$ is standard, i.\,e., it is a composition of ring, inner and central automorphisms.

\smallskip

\section*{Introduction}\leavevmode

An associative commutative ring $R$ with a unit is called  \emph{local},
if it contains exactly one maximal ideal (that coincides with the radical of~$R$). Equivalently, the set of all non-invertible
elements of~$R$ is an ideal.

We describe automorphisms of Chevalley groups of type $B_l$ over local rings with~$1/2$.

The analogue of Theorem~1 for the root systems $A_l, D_l, $ and $E_l$ was obtained by the author in~\cite{ravnyekorni}, in~\cite{normalizers} all automorphisms of Chevalley groups of given types over local rings with~$1/2$ were described.  Theorem~1 for the root systems $B_2$ and $G_2$ is proved in~\cite{korni2},  for the system $F_4$ it is proved in~\cite{bunF4}.
.

Similar results for Chevalley groups over fields were proved
 by R.\,Steinberg~\cite{Stb1} for the finite case and by J.\,Humphreys~\cite{H} for the infinite case. Many papers were devoted
to description of automorphisms of Chevalley groups over different
commutative rings, we can mention here the papers of
Borel--Tits~\cite{v22}, Carter--Chen~Yu~\cite{v24},
Chen~Yu~\cite{v25}--\cite{v29}, A.\,Klyachko~\cite{Klyachko}.
 E.\,Abe~\cite{Abe_OSN} proved that all automorphisms of Chevalley groups under Noetherian  rings with~$1/2$ are standard.

The case
$A_l$ was completely studied by the papers of
W.C.~Waterhouse~\cite{v46}, V.M.~Petechuk~\cite{v12},  Fuan Li and
Zunxian Li~\cite{v37}, and also for rings without~$1/2$. The paper
of I.Z.\,Golubchik and A.V.~Mikhalev~\cite{v8} covers the
case~$C_l$, that is not considered in the present paper. Automorphisms and isomorphisms of general linear groups over arbitrary associative rings were described by E.I.~Zelmanov in~\cite{v11} and by I.Z.~Golubchik, A.V.~Mikhalev in~\cite{GolMikh1}.

 We generalize some methods of V.M.~Petechuk~\cite{Petechuk1} to prove Theorem~1.

The author is thankful to N.A.\,Vavilov,  A.A.\,Klyachko,
A.V.\,Mikhalev for valuable advices, remarks and discussions.

\section{Definitions and main theorems.}\leavevmode

 We fix the root system~$\Phi$ of the type $B_l$, $l\geqslant 2$. Detailed texts about root
systems and their properties can be found in the books
\cite{Hamfris}, \cite{Burbaki}).

Let $e_1,\dots,e_l$
be an orthonorm basis of the space $\mathbb R^l$. Then we  numerate the roots of $B_l$
as follows:
$$
\alpha_1= e_1-e_2, \alpha_2=e_2-e_3, \alpha_{l-1}=e_{l-1}-e_l, \alpha_l=e_l
$$
are simple roots;
\begin{align*}
e_i\pm e_j, \ e_i, \ i<j,
\end{align*}
are other positive roots.

Suppose now that we have a
semisimple complex Lie algebra~$\mathcal L$ of type $\Phi$ with
Cartan subalgebra~$\mathcal H$ (detailed information about
semisimple Lie algebras can be found in the book~\cite{Hamfris}).

Then we can choose a basis $\{ h_1, \dots, h_l\}$ in~$\mathcal H$ and for every
$\alpha\in \Phi$ elements $x_\alpha \in {\mathcal L}_\alpha$ so that $\{ h_i; x_\alpha\}$ form a basis in~$\mathcal L$ and for every two elements of this basis their commutator is an integral linear combination of the elements of the same basis.

Let us introduce elementary Chevalley groups (see,
for example,~\cite{Steinberg}).

Let  $\mathcal L$ be a semisimple Lie algebra (over~$\mathbb C$)
with a root system~$\Phi$, $\pi: {\mathcal L}\to gl(V)$ be its
finitely dimensional faithful representation  (of dimension~$n$). If
$\mathcal H$ is a Cartan subalgebra of~$\mathcal L$, then a
functional
 $\lambda \in {\mathcal H}^*$ is called a
 \emph{weight} of  a given representation, if there exists a nonzero vector $v\in V$
 (that is called a  \emph{weight vector}) such that
for any $h\in {\mathcal H}$ $\pi(h) v=\lambda (h)v.$

In the space~$V$ there exists a basis of weight vectors such that
all operators $\pi(x_\alpha)^k/k!$ for $k\in \mathbb N$ are written
as integral (nilpotent) matrices. This basis is called a
\emph{Chevalley basis}. An integral matrix also can be considered as
a matrix over an arbitrary commutative ring with~$1$. Let $R$ be
such a ring. Consider matrices $n\times n$ over~$R$, matrices
$\pi(x_\alpha)^k/k!$ for
 $\alpha\in \Phi$, $k\in \mathbb N$ are included in $M_n(R)$.

Now consider automorphisms of the free module $R^n$ of the form
$$
\exp (tx_\alpha)=x_\alpha(t)=1+tx_\alpha+t^2 (x_\alpha)^2/2+\dots+
t^k (x_\alpha)^k/k!+\dots
$$
Since all matrices $x_\alpha$ are nilpotent, we have that this
series is finite. Automorphisms $x_\alpha(t)$ are called
\emph{elementary root elements}. The subgroup in $Aut(R^n)$,
generated by all $x_\alpha(t)$, $\alpha\in \Phi$, $t\in R$, is
called an \emph{elementary adjoint Chevalley group} (notation:
$E_{\ad}(\Phi,R)=E_{\ad}(R)$).

The action of $x_\alpha(t)$ on the Chevalley basis is described in
\cite{v23}, \cite{VavPlotk1}.

All weights of a given representation (by addition) generate a
lattice (free Abelian group, where every  $\mathbb Z$-basis  is also
a $\mathbb C$-basis in~${\mathcal H}^*$), that is called the
\emph{weight lattice} $\Lambda_\pi$.

Elementary Chevalley groups are defined not even by a representation
of the Chevalley groups, but just by its \emph{weight lattice}.
Namely, up to an abstract isomorphism an elementary Chevalley group
is completely defined by a root system~$\Phi$, a commutative
ring~$R$ with~$1$ and a weight lattice~$\Lambda_\pi$.

Among all lattices we can mark  the lattice corresponding to the
adjoint representation: it is generated by all roots (the \emph{root
lattice}~$\Lambda_{ad}$). The corresponding (elementary) Chevalley group is called \emph{adjoint}.

Introduce now Chevalley groups (see~\cite{Steinberg},
\cite{Chevalley}, \cite{v3}, \cite{v23}, \cite{v30}, \cite{v43},
\cite{VavPlotk1}, and also latter references in these papers).

Consider semisimple linear algebraic groups over algebraically
closed fields. These are precisely elementary Chevalley groups
$E_\pi(\Phi,K)$ (see.~\cite{Steinberg},~\S\,5).

All these groups are defined in $SL_n(K)$ as  common set of zeros of
polynomials of matrix entries $a_{ij}$ with integer coefficients
 (for example,
in the case of the root system $C_l$ and the universal
representation we have $n=2l$ and the polynomials from the condition
$(a_{ij})Q(a_{ji})-Q=0$). It is clear now that multiplication and
taking inverse element are also defined by polynomials with integer
coefficients. Therefore, these polynomials can be considered as
polynomials over arbitrary commutative ring with a unit. Let some
elementary Chevalley group $E$ over~$\mathbb C$ be defined in
$SL_n(\mathbb C)$ by polynomials $p_1(a_{ij}),\dots, p_m(a_{ij})$.
For a commutative ring~$R$ with a unit let us consider the group
$$
G(R)=\{ (a_{ij})\in \SL_n(R)\mid \widetilde p_1(a_{ij})=0,\dots
,\widetilde p_m(a_{ij})=0\},
$$
where  $\widetilde p_1(\dots),\dots \widetilde p_m(\dots)$ are
polynomials having the same coefficients as
$p_1(\dots),\dots,p_m(\dots)$, but considered over~$R$.

This group is called the \emph{Chevalley group} $G_\pi(\Phi,R)$ of
the type~$\Phi$ over the ring~$R$, and for every algebraically
closed field~$K$ it coincides with the elementary Chevalley group.

The subgroup of diagonal (in the standard basis of weight vectors)
matrices of the Chevalley group $G_\pi(\Phi,R)$ is called the
 \emph{standard maximal torus}
of $G_\pi(\Phi,R)$ and it is denoted by $T_\pi(\Phi,R)$. This group
is isomorphic to $Hom(\Lambda_\pi, R^*)$.

Let us denote by $h(\chi)$ the elements of the torus $T_\pi
(\Phi,R)$, corresponding to the homomorphism $\chi\in Hom
(\Lambda(\pi),R^*)$.

In particular, $h_\alpha(u)=h(\chi_{\alpha,u})$ ($u\in R^*$, $\alpha
\in \Phi$), where
$$
\chi_{\alpha,u}: \lambda\mapsto u^{\langle
\lambda,\alpha\rangle}\quad (\lambda\in \Lambda_\pi).
$$

Note that the condition
$$
G_\pi (\Phi,R)=E_\pi (\Phi,R)
$$
is not true even for fields, that are not algebraically closed.
Let us show the difference between Chevalley groups and their elementary subgroups in the case when $R$ is semilocal. In this case
 $G_\pi (\Phi,R)=E_\pi(\Phi,R)T_\pi(\Phi,R)$
(see~\cite{v38}, \cite{Abe1}, \cite{v19}), and elements $h(\chi)$ are connected with elementary generators
by the formula
\begin{equation}\label{ee4}
h(\chi)x_\beta (\xi)h(\chi)^{-1}=x_\beta (\chi(\beta)\xi).
\end{equation}

 In the case of semilocal rings from  the formula~\eqref{ee4} we see that
$$
[G(\Phi,R),G(\Phi,R)]\subseteq E(\Phi,R).
$$
If $R$ (as in our case) also contains~$1/2$,
$l\geqslant 2$, we can easily show that
$$
[G(\Phi,R),G(\Phi,R)]=[E(\Phi,R), E(\Phi,R)]=E(\Phi,R).
$$

Define four types of automorphisms of a Chevalley group
 $G_\pi(\Phi,R)$, we
call them  \emph{standard}.

{\bf Central automorphisms.} Let $C_G(R)$ be a center of
$G_\pi(\Phi,R)$, $\tau: G_\pi(\Phi,R) \to C_G(R)$ be some
homomorphism of groups. Then the mapping $x\mapsto \tau(x)x$ from
$G_\pi(\Phi,R)$ onto itself is an automorphism of $G_\pi(\Phi,R)$,
that is denoted by~$\tau$ and called a \emph{central automorphism}
of the group~$G_\pi(\Phi,R)$.

Note that every central automorphism of a Chevalley group
$G_\pi(\Phi,R)$ is identical on the commutant. Under our conditions
the elementary subgroup $E_\pi (\Phi,R)$ is a commutant of the groups $G_\pi(\Phi,R)$ and $E_\pi(\Phi,R)$,
therefore on elementary Chevalley groups all central automorphisms are identical.

{\bf Ring automorphisms.} Let $\rho: R\to R$ be an automorphism of
the ring~$R$. The mapping $(a_{i,j})\mapsto (\rho (a_{i,j}))$ from $G_\pi(\Phi,R)$
onto itself is an automorphism of the group $G_\pi(\Phi,R)$, that is
denoted by the same letter~$\rho$ and is called a \emph{ring
automorphism} of the group~$G_\pi(\Phi,R)$. Note that for all
$\alpha\in \Phi$ and $t\in R$ an element $x_\alpha(t)$ is mapped to
$x_\alpha(\rho(t))$.

{\bf Inner automorphisms.} Let $S$ be some ring containing~$R$,  $g$
be an element of $G_\pi(\Phi,S)$, that normalizes the subgroup $G_\pi(\Phi,R)$. Then
the mapping $x\mapsto gxg^{-1}$  is an automorphism
of the group~$G_\pi(\Phi,R)$, that is denoted by $i_g$ and is called an
\emph{inner automorphism}, \emph{induced by the element}~$g\in G_\pi(\Phi,S)$. If $g\in G_\pi(\Phi,R)$, then call $i_g$ a \emph{strictly inner}
automorphism.

{\bf Graph automorphisms.} Let $\delta$ be an automorphism of the
root system~$\Phi$ such that $\delta \Delta=\Delta$. Then there
exists a unique automorphisms of $G_\pi (\Phi,R)$ (we denote it by
the same letter~$\delta$) such that for every $\alpha \in \Phi$ and
$t\in R$ an element $x_\alpha (t)$ is mapped to
$x_{\delta(\alpha)}(\varepsilon(\alpha)t)$, where
$\varepsilon(\alpha)=\pm 1$ for all $\alpha \in \Phi$ and
$\varepsilon(\alpha)=1$ for all $\alpha\in \Delta$.

There no graph automorphisms in the case $B_l$ under consideration.

Similarly we can define four type of automorphisms of the elementary
subgroup~$E(R)$. An automorphism~$\sigma$ of the group
 $G_\pi(\Phi,R)$ (or $E_\pi(\Phi,R)$)
is called  \emph{standard} if it is a composition of automorphisms
of these introduced four types.

Together with standard automorphisms we use the following ''temporary'' type of automorphisms of an elementary adjoint Chevalley group:

{\bf Automorphisms--conjugations.} Let $V$ be the representation space of the group $E_{\ad} (\Phi,R)$, $C\in \GL(V)$ be some matrix from its normalizer:
$$
C E_{\ad}(\Phi,R) C^{-1}= E_{\ad} (\Phi,R).
$$
 Then the mapping
 $x\mapsto CxC^{-1}$ from $E_\pi(\Phi,R)$ onto itself is an automorphism of the Chevalley group, that is denoted by~$i_С$ and is called an  \emph{automorphism--conjugation} of~$E(R)$,
\emph{induced by an element}~$C$ of~$\GL(V)$.

Our aim is to prove the next main theorem:

\begin{theorem}\label{main}
Let $G=G_{\pi}(\Phi,R)$ $(E_\pi(\Phi,R))$
be an \emph{(}elementary\emph{)} Chevalley group with a the root system  $B_l$, $l\geqslant 2$, $R$ is a commutative local ring
with~$1/2$. Then every automorphism of~$G$ is standard. If the Chevalley group is adjoint, then an inner automorphism in the composition is strictly
inner.
\end{theorem}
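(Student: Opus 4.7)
The plan is to follow the general strategy of Petechuk~\cite{Petechuk1}, adapted to the root system~$B_l$. First I would reduce to the elementary subgroup: under the given hypotheses $E_\pi(\Phi,R)=[G_\pi(\Phi,R),G_\pi(\Phi,R)]$, so any automorphism $\sigma$ of $G_\pi(\Phi,R)$ preserves $E_\pi(\Phi,R)$, and the ambiguity in recovering $\sigma$ on all of $G$ from its restriction to~$E$ is precisely absorbed by a central automorphism (since $G/E$ embeds into a quotient of the torus). It also suffices to treat the adjoint representation: the isogeny from $E_{\ad}(\Phi,R)$ to $E_\pi(\Phi,R)$ sends standard automorphisms to standard automorphisms, and central data can be tracked separately. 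Thus the problem reduces to describing automorphisms of $E_{\ad}(\Phi,R)$, for which the temporary notion of an automorphism--conjugation is introduced as a bridge.

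Next I would exploit the local structure of~$R$ by reducing modulo the maximal ideal~$\mathfrak{m}$. The projection $R\to R/\mathfrak{m}=k$ induces a surjection $E_{\ad}(\Phi,R)\to E_{\ad}(\Phi,k)$, and $\sigma$ descends to an automorphism of a Chevalley group over a field. By Steinberg~\cite{Stb1} and Humphreys~\cite{H} this induced automorphism is standard; since $B_l$ admits no nontrivial graph automorphism it is a composition of a ring, inner, and central automorphism. I would lift suitable inverses of these standard factors to~$R$ (the ring automorphism of~$k$ need not lift, but its action on the generators $x_\alpha(t)$ can be matched by adjusting the torus data using that every residue class in~$R$ contains units) and compose with~$\sigma$ to obtain an automorphism~$\sigma'$ which is the identity on each root subgroup modulo~$\mathfrak{m}$.

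The next step is to represent~$\sigma'$ as an automorphism--conjugation on the representation space~$V$. Tracking the images of the standard maximal torus~$T_{\ad}$ and of the root subgroups~$X_\alpha$, I would reconstruct a matrix~$C\in \GL(V)$ that conjugates $E_{\ad}(\Phi,R)$ to itself and realizes~$\sigma'$. The key inputs are the Chevalley commutator relations together with formula~\eqref{ee4}: the torus action pins down the weight decomposition of~$V$, and the unipotent commutators determine~$C$ up to scalars on each weight space, which the local ring setting lets one rigidify using abundance of units. Once~$\sigma'$ is known to be an automorphism--conjugation, a separate argument shows the conjugating matrix lifts to an element of $G_\pi(\Phi,R)$, giving a (strictly) inner automorphism in the adjoint case, and the remaining slack on the torus merges with the ring factor from step two.

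The hardest step, I expect, will be precisely the passage from the residue field back to~$R$: showing that the putative conjugating matrix recovered from images of root and torus elements genuinely normalizes the group over~$R$, rather than just formally satisfying the Chevalley relations modulo~$\mathfrak{m}$. This is where the presence of~$1/2$ enters essentially, via the Chevalley commutator formulas for type~$B_l$ containing the interaction between short roots $e_i$ and long roots $e_i\pm e_j$; the coefficient~$2$ appearing in these relations (through $[X_\alpha, X_\beta]$ when $\alpha+\beta$ and $\alpha+2\beta$ are both roots) must be invertible to separate the contributions of different root subgroups and to solve for~$C$ uniquely. Assembling a ring automorphism from the residue-field reduction, an inner (strictly inner in the adjoint case) automorphism from the conjugating matrix, and a central automorphism from the lift back to~$G_\pi$ from~$E_\pi$, one obtains the standard decomposition asserted in Theorem~\ref{main}.
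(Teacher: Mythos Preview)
Your overall architecture---reduce to the elementary adjoint group via the commutant, reduce modulo the radical to invoke Steinberg--Humphreys, peel off the inner part by lifting a representative, then argue that what remains is (ring automorphism)$\circ$(conjugation by something in $\GL(V)$), and finally upgrade the conjugation to an inner automorphism---matches the paper's strategy. But two steps in your sketch are not workable as stated.

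First, you cannot arrange that $\sigma'$ is the identity on root subgroups modulo~$\mathfrak m$. The ring automorphism $\bar\rho$ of $k$ genuinely need not lift to~$R$, and its effect $x_\alpha(t)\mapsto x_\alpha(\bar\rho(t))$ is \emph{not} achievable by torus conjugation, which only rescales~$t$ by a fixed unit. The paper avoids this entirely: it lifts only the inner factor $\bar g$ to some $g\in\GL_n(R)$ and works with $\varphi'=i_{g^{-1}}\varphi$, which is merely an isomorphism onto a subgroup of $\GL_n(R)$ and reduces to $\bar\rho$ (not the identity) modulo~$J$. The saving observation is that on elements $x_\alpha(1)$, $w_\alpha(1)$, $h_\alpha(-1)$ with coefficients in the prime subring, $\bar\rho$ acts trivially, so $\varphi'$ sends each such element into its coset modulo $\GL_n(R,J)$; the ring automorphism of~$R$ is not peeled off at the start but \emph{emerges} at the end of the argument.

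Second, and more seriously, your description of how to produce the conjugating matrix~$C$---``the torus action pins down the weight decomposition, unipotent commutators determine~$C$ up to scalars''---is not a method that will close. The paper's proof of this step (its Theorem~2) occupies Sections~2--5 and proceeds quite differently: one first uses Petechuk's involution argument to find a basis in which all $h_{\alpha_i}(-1)$ are preserved; then, via a delicate sequence of further basis changes governed by the Weyl-group relations, one forces all $w_{\alpha_i}(1)$ to be preserved; then a lengthy case-by-case matrix computation (handled separately for $l=3$, $l=4$, $l\ge 5$, each involving dozens of explicit polynomial conditions) shows $x_{\alpha_i}(1)$ and $h_{\alpha_i}(t)$ are preserved up to a bijection $t\mapsto s$; finally one checks this bijection is a ring automorphism and surjects onto~$R$. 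Nothing like a direct reconstruction of~$C$ from weight-space data is available, because a priori $\varphi'$ is only an isomorphism into $\GL_n(R)$, not an automorphism of the Chevalley group. Your proposal correctly locates the hard step but does not supply the mechanism that makes it go through.
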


The main theorem follows from the next two theorems:

\begin{theorem}\label{old}
Every automorphism of an elementary adjoint Chevalley group of type
$B_l$, $l\geqslant 2$, over a local ring with~$1/2$
is a composition of a ring automorphism and an automorphism--conjugation.
\end{theorem}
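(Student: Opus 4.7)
My plan is to adapt the method of V.M.~Petechuk (see \cite{Petechuk1}), already used by the author for $A_l, D_l, E_l$ in \cite{ravnyekorni} and for $F_4$ in \cite{bunF4}, to the present $B_l$ setting. Let $\varphi$ be an automorphism of $E_{\ad}(B_l,R)$ acting on the representation space $V$ of the adjoint Chevalley group. The goal is to produce a matrix $C\in\GL(V)$ normalizing $E_{\ad}(B_l,R)$ and a ring automorphism $\rho:R\to R$ such that $\varphi(g)=C\,\rho(g)\,C^{-1}$ for all $g\in E_{\ad}(B_l,R)$, i.e.\ $\varphi=i_C\circ\rho$.

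The first step is to localize at the maximal ideal: reducing modulo the radical $J$ of $R$ gives an induced automorphism $\bar\varphi$ of $E_{\ad}(B_l,k)$, where $k=R/J$ is a field, and by Steinberg--Humphreys this $\bar\varphi$ is standard. Since $B_l$ admits no graph automorphisms, $\bar\varphi$ is the composition of a ring automorphism of $k$ and a conjugation by some $\bar C\in \GL(V\otimes k)$. I would lift $\bar C$ to an element $C_0\in\GL(V)$ and replace $\varphi$ by $\varphi'=i_{C_0}^{-1}\circ\varphi$, which becomes the identity modulo $J$. The problem then reduces to showing that every automorphism of $E_{\ad}(B_l,R)$ that is congruent to the identity modulo $J$ is itself an inner automorphism composed with a ring automorphism that is trivial modulo $J$.

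Next, I would study the action of $\varphi'$ on each individual root subgroup $X_\alpha=\{x_\alpha(t):t\in R\}$. Using the Chevalley commutator relations in $B_l$ (whose structure constants are $\pm 1,\pm 2$, all invertible thanks to $1/2\in R$) together with the torus action recorded in formula~\eqref{ee4}, I would show that $\varphi'(x_\alpha(t))=x_\alpha(\rho(t))\cdot u_\alpha(t)$ where $\rho:R\to R$ is additive and multiplicative and each correction $u_\alpha(t)$ lies in the congruence kernel modulo $J$. The corrections $u_\alpha(t)$ are then absorbed by a further inner conjugation by an element of $E_{\ad}(B_l,R)$ congruent to $1$ modulo~$J$, obtained from the fixed-point data of $\varphi'$ on the torus $T_{\ad}(B_l,R)$ and on a compatible system of weight vectors in $V$. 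Collecting all conjugations then gives the required $C$, and the consistent map $\rho$ is the required ring automorphism.

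The main obstacle will be in the middle step: controlling $\varphi'$ simultaneously on the long-root and short-root subgroups. In $B_l$ the simple root $\alpha_l=e_l$ is short while $\alpha_1,\ldots,\alpha_{l-1}$ are long, and the commutator relation $[x_{e_i}(t),x_{e_j}(s)]=x_{e_i+e_j}(\pm 2ts)$ and its analogs involve the factor $2$ that must be divided out. Tracking these relations so that the extracted $\rho$ coincides on short-root and long-root subgroups --- rather than possibly differing by a Frobenius-like twist, as happens in characteristic $2$ where $B_l$ and $C_l$ collapse --- is the delicate point, and is precisely where the assumption $1/2\in R$ is indispensable. Establishing this compatibility, together with the verification that $\rho$ so obtained is genuinely a ring automorphism of $R$ (not merely of residues), will occupy the technical core of the proof.
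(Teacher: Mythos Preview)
Your opening move---reduce modulo $J$, invoke Steinberg--Humphreys over the residue field, lift the conjugating matrix to $\GL(V)$---is exactly what the paper does in Section~2. From there on, however, your outline and the paper's argument diverge, and your version has a real gap.

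The paper does not try to establish a formula $\varphi'(x_\alpha(t))=x_\alpha(\rho(t))\cdot u_\alpha(t)$ with $u_\alpha(t)$ in the congruence kernel and then ``absorb'' $u_\alpha(t)$ by conjugation \emph{inside} $E_{\ad}(B_l,R)$. A priori $\varphi'(x_\alpha(t))$ is merely some element congruent to $x_\alpha(t)$ modulo~$J$; there is no evident reason it should split as a root unipotent times a correction removable by an \emph{inner} automorphism. Indeed the conjugating matrix $C$ that Theorem~\ref{old} produces lies only in $\GL(V)$, not in the Chevalley group---that is precisely why the statement says ``automorphism--conjugation'' rather than ``inner automorphism''. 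The passage from automorphism--conjugation to inner is the separate content of Theorem~\ref{norm}, and your sketch conflates the two.

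What the paper actually does is work concretely in the adjoint matrix representation and perform successive basis changes in $\GL_n(R,J)$. First (Section~3) it uses the Petechuk propositions on involutions to diagonalize the $h_{\alpha_i}(-1)$, and then through an inductive chain of explicit basis changes (Lemmas~\ref{l3_1}--\ref{l3_5}) forces $\varphi_2(w_{\alpha_i})=w_{\alpha_i}$ for all simple reflections. Only after the Weyl group is pinned down does it attack $x_{\alpha}(1)$ and $h_\alpha(t)$ (Section~4), and this is done by massive explicit matrix computations carried out \emph{separately} for $B_3$, $B_4$, and $B_l$ with $l\ge 5$: block decompositions of $\varphi_2(x_{\alpha_i}(1))$ dictated by commutation with the already-fixed $w_j$ and $h_j(-1)$, followed by elimination of well over a hundred scalar unknowns via the Chevalley relations. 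Your commutator-relation sketch gives no mechanism that replaces this, and in particular no way to pin down $\varphi'(x_\alpha(1))$ before a candidate $\rho$ is even available.

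Finally, you omit the surjectivity of $\rho$. Once one has $\varphi_2(x_\alpha(t))=x_\alpha(\rho(t))$, the map $\rho$ is a priori only an injective ring endomorphism $R\to R'\subseteq R$. The paper closes this in Section~5 by showing that $E_{\ad}(B_l,R)$ generates the full matrix ring $M_n(R)$ (Lemma~\ref{porozhd}), whence $C\,M_n(R)\,C^{-1}=M_n(R')$ with $C\in\GL_n(R)$ forces $R'=R$ (Lemma~\ref{Tema}). Without this step you have not produced a ring \emph{automorphism}.
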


\begin{theorem}\label{norm}
Every  automorphism--conjugation of an elementary adjoint Chevalley group  of type $B_l$, $l\geqslant 2$, over a local ring is  strictly inner \emph{(}conjugation with the help of the corresponding Chevalley group\emph{)}.
\end{theorem}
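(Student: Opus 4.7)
I would show that after correcting $C$ by successive conjugations coming from $G_{\ad}(\Phi, R)$, the remaining $\GL(V)$-conjugation is induced by an element of the standard torus $T_{\ad}(\Phi, R)$; the composite correction will then lie in $G_{\ad}(\Phi, R) = E_{\ad}(\Phi, R)\cdot T_{\ad}(\Phi, R)$, which is valid since $R$ is local (as quoted before formula~\eqref{ee4}). The three corrections will handle, in order, the torus, the Weyl permutation, and the torus character.

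First I would show that, after multiplying $C$ by a suitable element of $E_{\ad}(\Phi, R)$, we may assume $i_C$ preserves the standard torus $T_{\ad}(\Phi, R)$ setwise. Indeed $C\, T_{\ad}(\Phi, R)\, C^{-1}$ is a split maximal torus of $E_{\ad}(\Phi, R)$, and over a local ring any two such tori are $E_{\ad}(\Phi, R)$-conjugate by a standard diagonalization-and-lifting argument of the sort used in the companion paper~\cite{normalizers}. Next, with $T_{\ad}$ normalized, $C$ must permute the $T_{\ad}$-weight spaces of $V$ (namely $\mathcal{H}$ and the root spaces $\mathcal{L}_\alpha$), and therefore sends each root subgroup $X_\alpha = \{x_\alpha(t) : t \in R\}$ to $X_{\sigma(\alpha)}$ for some automorphism $\sigma$ of the root system~$\Phi$. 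Because $B_l$ admits no nontrivial diagram automorphism, $\sigma$ lies in the Weyl group $W(\Phi)$, so we may lift it to an element $n_\sigma \in E_{\ad}(\Phi, R)$ (a product of the standard factors $w_\alpha(1) = x_\alpha(1)\,x_{-\alpha}(-1)\,x_\alpha(1)$) and absorb it into $C$ to further achieve $i_C(X_\alpha) = X_\alpha$ for every $\alpha \in \Phi$.

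At this point, because conjugation by $C$ is $R$-linear on $\mathrm{End}_R(V)$ and $x_\alpha(t) = \exp(t\,x_\alpha)$ is polynomial in $t$, the restriction $i_C|_{X_\alpha}$ must have the form $x_\alpha(t) \mapsto x_\alpha(c_\alpha t)$ with $c_\alpha \in R^*$. Applying $i_C$ to both sides of the Chevalley commutator relations then forces $c_{i\alpha + j\beta} = c_\alpha^i\, c_\beta^j$ for every $\alpha,\beta$ and every $(i,j)$ producing a root; these relations show that $\alpha \mapsto c_\alpha$ extends to a homomorphism $\chi \in \Hom(\Lambda_{\ad}, R^*)$, i.e., to an element $h(\chi) \in T_{\ad}(\Phi, R)$. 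By formula~\eqref{ee4}, $h(\chi)\,x_\alpha(t)\,h(\chi)^{-1} = x_\alpha(\chi(\alpha)\, t) = x_\alpha(c_\alpha t)$, so $i_C$ and $i_{h(\chi)}$ agree on every root generator and hence on all of $E_{\ad}(\Phi, R)$. Unwinding the three adjustments yields an element $g \in E_{\ad}(\Phi, R) \cdot T_{\ad}(\Phi, R) = G_{\ad}(\Phi, R)$ with $i_C = i_g$, which is the strictly inner conclusion.

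I expect the main obstacle to be the very first step: even knowing that $C\, T_{\ad}\, C^{-1}$ sits inside $E_{\ad}(\Phi, R)$, one still has to produce an element of $E_{\ad}(\Phi, R)$ itself (rather than of some overgroup or of the ambient $\GL(V)$) that conjugates it back to the standard torus. Establishing this $E_{\ad}$-transitivity on split maximal tori over an \emph{arbitrary} local ring---without the hypothesis $1/2 \in R$ that simplifies the earlier theorems---is the delicate point; once it is in place, Steps two and three reduce to routine manipulation of the Chevalley commutator relations and formula~\eqref{ee4}.
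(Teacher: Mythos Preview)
Your route is genuinely different from the paper's. You propose a torus-normalization argument: conjugate $C$ so it stabilizes $T_{\ad}$, absorb the resulting Weyl permutation, then read off a character of the root lattice. The paper instead reduces $C$ modulo the radical~$J$, applies Steinberg's theorem over the residue field $k=R/J$ (Lemma~\ref{fromAnton}) to correct $C$ by an element of $G_{\ad}(\Phi,R)$ so that the new $C$ is congruent to~$E$ modulo~$J$, and then shows via an explicit linearized matrix calculation (built on the technical Lemma~\ref{prod2}, which pins down $n+1$ matrix entries that determine a product decomposition) that such a $C$ must lie in $\lambda\cdot G_{\ad}(\Phi,R)$. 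Your approach is more conceptual; the paper's is computational but entirely self-contained and never needs a torus-conjugacy statement over~$R$.

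The gap you flag in Step~1 is real, and the argument as written does not close it. Two specific obstructions. First, $T_{\ad}(\Phi,R)$ need not sit inside $E_{\ad}(\Phi,R)$ for $B_l$: the coroot lattice has index~$2$ in the coweight lattice, so the elements $h_\alpha(t)$ generate only the subgroup of $T_{\ad}\cong (R^*)^l$ in which the product of coordinates is a square, and this is proper whenever $R^*/(R^*)^2\ne 1$. Since you only know that $C$ normalizes $E_{\ad}$ (not $G_{\ad}$), it is not clear that $C\,T_{\ad}\,C^{-1}$ even lands in $G_{\ad}$, let alone that it is a ``split maximal torus'' in any sense for which a conjugacy theorem applies. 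Second, even restricting to the subgroup generated by the $h_\alpha(t)$, the assertion that any two such tori are $E_{\ad}$-conjugate over an arbitrary local ring is a substantive result, not a routine lift; the reference~\cite{normalizers} treats only the simply-laced types $A_l,D_l,E_l$ and does not supply what you need for~$B_l$. The paper's reduction-mod-$J$ strategy sidesteps both problems: torus conjugacy is invoked only over the field~$k$, where it is contained in Steinberg's classification.

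One further remark: although Theorem~\ref{norm} is stated for a local ring without mention of~$1/2$, the paper's own proof opens \S6 by reinstating the hypothesis $1/2\in R$ and uses characteristic $\ne 2$ in Lemma~\ref{fromAnton}; so your attempt to drop $1/2$ aims beyond what the paper itself establishes.
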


Four next sections a devoted to the proof of Theorem~\ref{old}, \S\,6 is devoted to the proof of Theorem~\ref{norm}.

\section{Replacing the initial automorphism to the special one.}\leavevmode

In this section we use some reasonings
from~\cite{Petechuk1}.

Let $J$ be the maximal ideal (radical) of~$R$, $k$  the residue
field $R/J$. Then $E_J=E_{ad}( \Phi,R,J)$ is the greatest normal
proper subgroup of $E_{\ad}(\Phi,R)$ (see~\cite{Abe1}).

Therefore,
$E_J$ is invariant under the action of~$\varphi$.

By this reason  the automorphism
$$
\varphi: E_{\ad} (\Phi,R)\to E_{\ad}(\Phi,R)
$$
induces an automorphism
$$
\overline \varphi: E_{\ad} (\Phi,R)/E_J=E_{\ad} (\Phi,k)\to
E_{\ad}(\Phi,k).
$$
The group $E_{\ad}(\Phi,k)$ is a Chevalley group over field,
therefore the automorphism $\overline \varphi$ is standard, i.\,e. it has the form
$$
\overline \varphi =   i_{\overline g} \overline \rho,\quad \overline
g\in E_J(E_{\ad}(\Phi,k)).
$$
(for the type $B_l$ graph automorphisms do not exist).

 It is clear that there exists a matrix $g\in GL_n(R)$ such that
its image under factorization  $R$ by~$J$ coincides with~$\overline
g$. We are not sure that  $g\in N_{\GL_n(R)}(E_{\ad}(\Phi,R))$.

Consider  a mapping $\varphi'= i_{g^{-1}} \varphi$. It is an
isomorphism of the group
 $E_{ad}(\Phi,R)\subset GL_n(R)$ onto some subgroup in $GL_n(R)$,
with the property that its image under factorization $R$ by $J$
coincides with the automorphism $\overline \rho$.

It gives us that Every matrix $A\in E_{\ad}(\Phi,R)$ with elements from the
subring~$R'$ of~$R$, generated by unit, is mapped under the action
of~$\varphi'$ to some matrix from the set~$A\cdot \GL_n(R,J)$.

No use some arguments from the paper~\cite{Petechuk1} (the group $\SL_n(R)$ was considered there).

Let $a\in E_{ad} (\Phi,R)$, $a^2=1$. Then the element $e=\frac{1}{2}
(1+a)$ is an idempotent in the ring $M_n(R)$. This idempotent  $e$
defines a decomposition of the free $R$-module $V=R^n$:
$$
V=eV\oplus (1-e)V=V_0\oplus V_1.
$$
With the isomorphism $\varphi'$ we associate the decomposition $V=V_0'\oplus V_1'$
of our module~$V$, defined by an idempotent  $e'=\frac{1}{2}
(1+\varphi(a)).$ Let $\overline V=\overline V_0 \oplus \overline
V_1= \overline V_0'\oplus \overline V_1'$
be decompositions of a $k$-module~$\overline V$ with respect to~$\overline a$ and
$\varphi(\overline a)$, and $\overline e=\frac{1}{2} (1+\overline a)$,
 $\overline e'=\frac{1}{2} (1+\overline \varphi(\overline a))$.

Then we have
\begin{proposition}\emph{(see Proposition~6 in~\cite{Petechuk1})}.\label{pr1_1}
The modules \emph{(}subspaces\emph{)}
 $\overline V_0$, $\overline V_1$ are images of the modules $V_0$, $V_1$ under factorization by~$J$.
\end{proposition}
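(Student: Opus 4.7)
The plan is to exploit the naturality of the idempotent construction under reduction modulo $J$. The crucial observation is that the idempotent $\overline e$ defining the decomposition $\overline V = \overline V_0 \oplus \overline V_1$ is literally the entry-wise reduction of the idempotent $e = \tfrac{1}{2}(1+a)$ defining $V = V_0 \oplus V_1$; this is immediate because the entry-wise reduction map $M_n(R) \to M_n(k)$ is a ring homomorphism and $1/2$ belongs to both $R$ and $k$, so $\tfrac{1}{2}(1+a)$ is sent to $\tfrac{1}{2}(1+\overline a) = \overline e$.

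Given this, I would finish by a one-line computation. Letting $\pi \colon V \to \overline V$ denote the $R$-linear reduction map, I would first record the obvious compatibility $\pi(Mv) = \overline M\,\pi(v)$ valid for every $M\in M_n(R)$ and every $v\in V$. Applying this with $M=e$ and using surjectivity of $\pi$ gives $\pi(V_0) = \pi(eV) = \overline e\,\pi(V) = \overline e\,\overline V = \overline V_0$, and the parallel argument with $1-e$ in place of $e$ delivers $\pi(V_1) = \overline V_1$.

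I do not anticipate any genuine obstacle: the proposition is a formal naturality statement about the idempotent decomposition of a free module under a ring quotient, and the specific hypotheses, namely $a \in E_{\ad}(\Phi,R)$ with $a^2 = 1$ and $1/2 \in R$, enter only to guarantee that $e$ is a bona fide idempotent whose reduction remains idempotent. The most likely reason for isolating the statement as a separate proposition is so that it can be invoked cleanly in the subsequent Nakayama-style arguments, where one lifts a $k$-basis of $\overline V_i$ to an $R$-basis of $V_i$ and transfers direct-sum decompositions between $V$ and $\overline V$.
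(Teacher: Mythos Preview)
Your proof is correct and follows essentially the same approach as the paper: both hinge on the compatibility $\overline{e\,x}=\overline e\,\overline x$ (your identity $\pi(Mv)=\overline M\,\pi(v)$ specialized to $M=e$). The only cosmetic difference is that the paper verifies the two inclusions $\widetilde V_i\subseteq\overline V_i$ and $\overline V_i\subseteq\widetilde V_i$ separately, whereas you obtain equality in one stroke from the surjectivity of~$\pi$.
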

\begin{proof} Let us denote the images of $V_0$, $V_1$ under factorization
by $J$ by $\widetilde V_0$, $\widetilde V_1$, respectively. Since
$V_0=\{ x\in V| ex=x\},$ $V_1= \{ x\in V|ex=0\},$  we have
 $\overline e(\overline x)=\frac{1}{2}(1+\overline a)(\overline x)=\frac{1}{2}
(1+\overline a(\overline
x))=\frac{1}{2}(1+\overline{a(x)})=\overline{e(x)}$. Then
$\widetilde V_0\subseteq \overline V_0$, $\widetilde V_1\subseteq
\overline V_1$.

Let $x=x_0+x_1$, $x_0\in V_0$, $x_1\in V_1$. Then $\overline
e(\overline x)=\overline e(\overline x_0)+\overline e (\overline
x_1)=\overline x_0$. If $\overline x\in \widetilde V_0$, then
$\overline x=\overline x_0$.
\end{proof}

Similarly to Proposition~8 from~\cite{Petechuk1} we have

\begin{proposition}\label{pr1_2}
Suppose that an isomorphism  $\varphi': E_{\ad} (\Phi,R)\to
\varphi(E_{\ad}(\Phi,R))\subset \GL_n(R)$ satisfies all properties described above, $ a\in E_{\ad}(\Phi,R)$, $a^2=1$, $a$ is a matrix
with elements from the subring of~$R$, generated by the unit, $b$
and $a$ are equivalent modulo~$J$, $V=V_0\oplus V_1$ is a
decomposition of~$V$ with respect to~$a$, $V=V_0'\oplus V_1'$ is a
decomposition of~$V$ with respect to~$b$.

If $V_0,V_1$ are free modules, then $V_0', V_1'$ are also free modules and $\dim V_0'=\dim V_0$, $\dim V_1'=\dim V_1$.
\end{proposition}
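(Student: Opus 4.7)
The plan is to reduce the claim to a standard fact about idempotents in a local ring: two idempotents in $M_n(R)$ that agree modulo the Jacobson radical give direct-sum decompositions of $R^n$ with free summands of the same ranks.

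First I would set $e=\tfrac12(1+a)$ and $e'=\tfrac12(1+b)$. These are well-defined because $2$ is a unit, and they are genuine idempotents: $e^2=e$ because $a^2=1$, and $b^2=1$ as well (the proposition is intended to be applied with $b=\varphi'(a)$, so $b$ is the image of an involution under an isomorphism). Thus $V=V_0\oplus V_1=eV\oplus(1-e)V$ and similarly $V=V_0'\oplus V_1'=e'V\oplus(1-e')V$. Since $b\equiv a\pmod J$, we have $e'-e\in J\cdot M_n(R)$.

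Next I would build a candidate basis of $V$ adapted to $e'$ out of a basis adapted to $e$. Pick free bases $v_1,\dots,v_m$ of $V_0$ and $v_{m+1},\dots,v_n$ of $V_1$; together they form a free basis of $V=R^n$. Set
\[
w_i=e'(v_i)\ \text{for}\ i\le m,\qquad w_i=(1-e')(v_i)\ \text{for}\ i>m.
\]
For $i\le m$ we have $v_i=e(v_i)$, hence $w_i-v_i=(e'-e)(v_i)\in JV$; analogously for $i>m$. So the transition matrix from $(v_i)$ to $(w_i)$ has the form $I+N$ with every entry of $N$ in $J$. Because $R$ is local, such a matrix is invertible, so $(w_1,\dots,w_n)$ is again a free basis of $V$.

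Finally I would show that the first $m$ vectors $w_1,\dots,w_m$ form a free basis of $V_0'$ and the last $n-m$ vectors form a free basis of $V_1'$. By construction $w_i\in V_0'$ for $i\le m$ and $w_i\in V_1'$ for $i>m$, so the spans are contained in the respective summands. Conversely, given any $v\in V_0'$, write it uniquely as $v=\sum_{i=1}^n c_i w_i$ using the basis from the previous step; applying $e'$ and using $e'(w_i)=w_i$ for $i\le m$ (since $e'^2=e'$) and $e'(w_i)=0$ for $i>m$, we get $v=e'(v)=\sum_{i\le m}c_iw_i$. Hence $V_0'$ is freely generated by $w_1,\dots,w_m$, and similarly $V_1'$ by $w_{m+1},\dots,w_n$. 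This yields freeness and the equalities $\dim V_0'=\dim V_0$, $\dim V_1'=\dim V_1$.

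The only potentially delicate point is the invertibility of the transition matrix $I+N$; this is where localness is essential (one can invoke the Neumann/Nakayama argument that $1+J\subset R^*$, applied entrywise via the determinant, or equivalently use that reduction mod $J$ gives the identity matrix and a lift of the inverse exists). Everything else is routine idempotent bookkeeping.
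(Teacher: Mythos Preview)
Your argument is correct. The idempotent construction, the transition matrix $I+N$ with $N\in M_n(J)$, and the verification that the $w_i$ split into free bases of $V_0'$ and $V_1'$ are all sound; the key invertibility of $I+N$ follows exactly as you say from $\det(I+N)\equiv 1\pmod J$ together with $R$ being local.

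Regarding comparison with the paper: the paper does not actually supply a proof of this proposition. It merely states the result with the remark ``Similarly to Proposition~8 from~[Petechuk1]'' and moves on. So there is no ``paper's approach'' to compare against here; you have written out the natural argument that the paper delegates to the cited reference. The only implicit assumption you had to supply is that $b^2=1$ (so that $e'$ is genuinely an idempotent and the decomposition $V_0'\oplus V_1'$ makes sense); you correctly identified that in the intended application $b=\varphi'(a)$ is the image of an involution under a group isomorphism, which is how the paper uses the statement.
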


\section{The images of~$w_{\alpha_i}$}

We consider some fixed adjoint Chevalley group $E=E_{ad}(\Phi,R)$
with the root system $B_l$ ($l\geqslant 2$),
 its adjoint representation in the group $GL_n(R)$
($n=l+2m$, where $m$ is the number of positive roots of~$\Phi$),
with the basis of weight vectors $v_1=x_{\alpha_1},
v_{-1}=x_{-\alpha_1}, \dots, v_n=x_{\alpha_n}, v_{-n}=x_{-\alpha_n},
V_1=h_{1},\dots,V_l=h_{l}$, corresponding to the Chevalley basis of
the system~$\Phi$.

We also have the isomorphism~$\varphi'$, described in Section~2.

Consider the matrices $h_{\alpha_1}(-1),\dots, h_{\alpha_l}(-1)$ in
our basis. They have the form
$$
h_{\alpha_i}(-1)=\diag [\pm 1,\dots, \pm 1,
\underbrace{1,\dots,1}_{l}],
$$
on $(2j-1)$-th and $(2j)$-th places we have $-1$ if and only if
 $\langle \alpha_i,\alpha_j\rangle=-1$. As we see, for all~$i$
$h_{\alpha_i}(-1)^2=1$.

According to Proposition~\ref{pr1_2} we know that every matrix
$h_i=\varphi''(h_{\alpha_i}(-1))$ in some basis is diagonal with
$\pm 1$ on the diagonal, and the number of $1$ and $-1$ coincides
with its number for the matrix $h_{\alpha_i}(-1)$. Since all
matrices $h_i$ commutes, there exists a basis, where all $h_i$ have
the same form as $h_{\alpha_i}(-1)$. Suppose that we come to this
basis with the help of the matrix~$g_1$. It is clear that
 $g_1\in GL_n(R,J)$. Consider the mapping
 $\varphi_1=i_{g_1}^{-1} \varphi'$. It is also an isomorphism
of the group $E$ onto some subgroup of $GL_n(R)$ such that its image
under factorization $R$ by~$J$ is~$\overline \rho$, and
$\varphi_1(h_{\alpha_i}(-1))=h_{\alpha_i}(-1)$ for all
$i=1,\dots,l$.

Let us consider the isomorphism~$\varphi_1$.

Every element $w_i=w_{\alpha_i}(1)$ moves by conjugation  $h_i$ to each other, therefore its image has a block-monomial form. In particular, this image can be rewritten as a block-diagonal matrix, where the first block is  $2m\times 2m$, the second is
$\l\times l$.

Consider the first basis vector after the last basis change. Denote it by~$e$. The Weil group  $W$
acts transitively on the set of roots of the same length, therefore for every
root~$\alpha_i$ of the same length as the first one,
there exists such $w^{(\alpha_i)}\in W$, that $w^{(\alpha_i)}
\alpha_1=\alpha_i$. Similarly, all roots of the second length are also conjugate under the action
of~$W$. Let $\alpha_k$ be the first root of the length that is not equal to the length of~$\alpha_1$, and let $f$ be
the $k$-th basis vector after the last basis change. If $\alpha_j$ is a root conjugate to $\alpha_k$, then let us denote by $w_{(\alpha_j)}$ an element of~$W$ such that
$w_{(\alpha_j)} \alpha_k=\alpha_j$. Consider now the basis
$e_1,\dots, e_{2m}, e_{2m+1},\dots, e_{2m+l}$, where $e_1=e$, $e_k=f$,
for $1< i\le 2m$ either $e_i=\varphi_1(w^{(\alpha_i)})e$, or
$e_i=\varphi_1(w_{(\alpha_i)})f$ (it depends of the length of $\alpha_k$); for $2m< i\leqslant 2m+l$ we do not move $e_i$. Clear that the matrix of this basis change is equivalent to the unit modulo radical. Therefore the obtained set of vectors also is a basis.

Clear that a matrix for $\varphi_1(w_i)$ ($i=1,\dots,l$) in the basis part $\{ e_1,\dots,e_{2m}\}$
 coincides with the matrix for $w_i$ in the initial basis of weight vectors.
Since $h_i(-1)$ are squares of $w_i$, then there images are not changed in the new basis.

Besides, we know that every matrix $\varphi_1(w_i)$
is block-diagonal up to decomposition of basis in the first  $2m$
and last $l$ elements. Therefore the last part of basis consisting of $l$ elements, can be changed independently.

Let us denote the elements and their images on this part of basis by $\widetilde w_i$ and $\widetilde{w_i'}$, respectively. These are matrices $l\times l$.

The matrix $\widetilde w_1$ is identical on the positions $3,\dots,l$, on the positions $1,2$ it has the form
$$
\begin{pmatrix}
-1& 1\\
0& 1
\end{pmatrix}.
$$
The matrices $\widetilde w_i$, $i=2,\dots,l-1$, are identical on the positions $1,\dots, i-2, i+2,\dots,l$, and on the positions $i-1,i,i+1$ they have the form
$$
\begin{pmatrix}
1& 0& 0\\
1& -1& 1\\
0& 0& 1
\end{pmatrix}.
$$
The matrix $\widetilde w_l$ is identical on the positions $1,\dots, l-2$, and on the positions $l-1,l$ it is
$$
\begin{pmatrix}
1& 0\\
2& -1
\end{pmatrix}.
$$

Let $\widetilde V=\widetilde
V_0^i\oplus \widetilde V_1^i$ be a decomposition of
$\widetilde{w_i'}$.

\begin{lemma}\label{l3_1}
Matrices $\widetilde{w_i'}$ and $\widetilde{w_j'}$,
$i\ne j$, commute if and only if $\widetilde
V_1^i\subseteq \widetilde V_0^j$ and $\widetilde V_1^j\subseteq
\widetilde V_0^i$.
\end{lemma}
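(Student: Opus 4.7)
The plan is to exploit that each $\widetilde{w_i'}$ is an involution (its square is the Cartan block of $\varphi_1(h_{\alpha_i}(-1)) = h_{\alpha_i}(-1)$, which is the identity on the last $l$ basis vectors), and that the $(-1)$-eigenmodule $\widetilde V_1^i$ is a rank-one free direct summand of $\widetilde V$. Freeness comes from $R$ being local with $1/2 \in R$, so that the idempotent $(1-\widetilde{w_i'})/2$ splits $\widetilde V$ into two projective and hence free summands. The rank count is obtained by reducing modulo $J$: since the induced automorphism of $E_{\ad}(\Phi,k)$ is a ring automorphism fixing integer-entry matrices, $\overline{\widetilde{w_i'}} = \widetilde w_i$, whose $(-1)$-eigenspace over the residue field $k$ is visibly the line $k \cdot \overline{e_i}$ (direct computation from the explicit block form of $\widetilde w_i$). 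Combining with Proposition~\ref{pr1_1}, $\widetilde V_1^i$ is free of rank one with reduction $k \cdot \overline{e_i}$.

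For the easy direction $(\Leftarrow)$, I would assume both inclusions and refine the decomposition as $\widetilde V = (\widetilde V_0^i \cap \widetilde V_0^j) \oplus \widetilde V_1^j \oplus \widetilde V_1^i$. The refinement works because for any $v \in \widetilde V_0^i$, writing $v = u_0 + u_1$ along $\widetilde V = \widetilde V_0^j \oplus \widetilde V_1^j$, the hypothesis $\widetilde V_1^j \subseteq \widetilde V_0^i$ forces $u_0 = v - u_1 \in \widetilde V_0^i$, hence $u_0 \in \widetilde V_0^i \cap \widetilde V_0^j$. On each of the three summands both involutions act as scalars $\pm 1$ (using the second hypothesis $\widetilde V_1^i \subseteq \widetilde V_0^j$ for the last summand), so they commute.

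The real content is in the converse $(\Rightarrow)$. Assuming $\widetilde{w_i'}$ and $\widetilde{w_j'}$ commute, $\widetilde{w_j'}$ preserves the eigenspaces $\widetilde V_0^i, \widetilde V_1^i$ of $\widetilde{w_i'}$. The restriction $\widetilde{w_j'}|_{\widetilde V_1^i}$ is an involution on a rank-one free $R$-module, and for a local ring with $1/2$ any $r \in R$ with $r^2 = 1$ satisfies $r = \pm 1$ (since $(r-1)(r+1)=0$ and $2 \notin J$ makes at most one of $r\pm 1$ lie in $J$). Hence either $\widetilde V_1^i \subseteq \widetilde V_0^j$, which is what we want, or $\widetilde V_1^i \subseteq \widetilde V_1^j$.

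The main obstacle is ruling out the alternative $\widetilde V_1^i \subseteq \widetilde V_1^j$. Since both sides are rank-one free direct summands of $\widetilde V$, such an inclusion is automatically an equality over the local ring $R$: if $\widetilde V_1^i = Re$ and $\widetilde V_1^j = Rf$ with $e = \lambda f$, then the image of $e$ in $\overline{\widetilde V_1^j}$ (which is one-dimensional) is nonzero because $\widetilde V_1^i$ is itself rank-one free, forcing $\overline\lambda \ne 0$ and hence $\lambda \in R^*$. But by the reduction step of the first paragraph, $\overline{\widetilde V_1^i} = k \cdot \overline{e_i}$ and $\overline{\widetilde V_1^j} = k \cdot \overline{e_j}$ are distinct lines in $\overline{\widetilde V}$ for $i \ne j$, contradicting the equality. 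Hence $\widetilde V_1^i \subseteq \widetilde V_0^j$, and the symmetric argument with $i, j$ swapped gives $\widetilde V_1^j \subseteq \widetilde V_0^i$.
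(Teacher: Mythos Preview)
Your proof is correct and follows the same line as the paper's: both exploit that commuting involutions preserve each other's eigenmodules, so the rank-one $\widetilde V_1^i$ lands in either $\widetilde V_0^j$ or $\widetilde V_1^j$, and both rule out the latter by contradiction (the paper via the intermediate deduction $\widetilde V_0^i=\widetilde V_0^j$ and hence $\widetilde{w_i'}=\widetilde{w_j'}$, you more directly by reducing modulo~$J$ and observing that the $(-1)$-eigenlines $k\,\overline e_i$ and $k\,\overline e_j$ of $\widetilde w_i$ and $\widetilde w_j$ are distinct). You additionally supply the easy converse direction and the freeness/rank-one justification for $\widetilde V_1^i$, both of which the paper leaves implicit.
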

\begin{proof}
If $\widetilde{\varphi_1(w_i)}$ and $\widetilde{\varphi_1(w_j)}$
commute, then the (free one-dimensional) submodule $\widetilde
V_1^i$ is proper for $\widetilde{\varphi_1(w_j)}$ and the (free
one-dimensional) submodule $\widetilde V_1^j$ is proper for
$\widetilde{\varphi_1(w_i)}$. Therefore either $\widetilde
V_1^i\subset \widetilde V_1^j$ or $\widetilde V_1^i\subset
\widetilde V_0^j$. If $\widetilde V_1^i\subset \widetilde V_1^j$
then $\widetilde V_1^i=\widetilde V_1^j$. Since the module $V_0^i$
is invariant for $\widetilde \varphi_1(w_j)$, we have $\widetilde
V_0^i\subset \widetilde V_0^j$, therefore $\widetilde
V_0^i=\widetilde V_0^j$, and so $\widetilde
\varphi_1(w_i)=\widetilde \varphi_1(w_j)$ and we come to
contradiction. Consequently, $\widetilde V_1^i\subset \widetilde
V_0^j$, and similarly $\widetilde V_1^j\subset \widetilde
V_0^i$.\end{proof}

\begin{lemma}\label{l3_2}
For the root system $B_l$ there exists such a basis in $\widetilde
V$ that the matrix $\widetilde \varphi_1(w_1)$ in this basis has the
same form as $w_1$, i.e. is equal to
$$
\begin{pmatrix}
-1& 1& 0\\
0& 1& 0\\
0& 0& E_{l-2}
\end{pmatrix}.
$$
\end{lemma}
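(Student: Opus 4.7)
The plan is to exploit the identity $w_{\alpha_1}(1)^2 = h_{\alpha_1}(-1)$ in the Chevalley group. Since $\varphi_1$ fixes $h_{\alpha_1}(-1)$ by the construction of the previous section, and since $h_{\alpha_1}(-1)$ acts as the identity on the Cartan-subalgebra part of the adjoint representation, the $l \times l$ Cartan block $\widetilde{w_1'}$ satisfies $\widetilde{w_1'}{}^2 = E_l$. In other words, $\widetilde{w_1'}$ is an involution on the free $R$-module $\widetilde V$ of rank~$l$.

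Because $1/2 \in R$, the operators $(1 \pm \widetilde{w_1'})/2$ are complementary idempotents, and so $\widetilde V = \widetilde V_0^1 \oplus \widetilde V_1^1$ with both summands projective and therefore free (since $R$ is local). To pin down their ranks, I reduce modulo~$J$: since $\widetilde w_1$ has integer entries and $\varphi_1$ reduces to the ring automorphism $\overline\rho$, the reduction of $\widetilde{w_1'}$ is just $\widetilde w_1$, whose $+1$- and $-1$-eigenspaces have dimensions $l-1$ and $1$. This is the analogue for the Cartan block of Proposition~\ref{pr1_2}, and it yields $\mathrm{rk}\,\widetilde V_0^1 = l-1$ and $\mathrm{rk}\,\widetilde V_1^1 = 1$.

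Having this decomposition, I build the desired basis by hand. Pick a generator $v_1$ of $\widetilde V_1^1$ and any basis $u_2, v_3, \dots, v_l$ of $\widetilde V_0^1$. Then set $v_2 := u_2 - v_1/2$. The change of basis from $v_1, u_2, v_3, \dots, v_l$ to $v_1, v_2, v_3, \dots, v_l$ is unipotent upper-triangular, so the latter is still a basis of $\widetilde V$. A direct computation gives
$$
\widetilde{w_1'} v_1 = -v_1, \qquad \widetilde{w_1'} v_2 = u_2 + \tfrac{1}{2} v_1 = v_1 + v_2, \qquad \widetilde{w_1'} v_i = v_i \quad (i \geq 3),
$$
which reads off as exactly the claimed block matrix form.

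The only nontrivial step is the free splitting of $\widetilde V$ into summands of the correct ranks; once it is in hand, the "off-diagonal $1$" in position $(1,2)$ is produced by the shift $v_2 = u_2 - v_1/2$, which is the place where $1/2 \in R$ is essential. I expect no other obstacle: the argument is entirely formal once the involution property of $\widetilde{w_1'}$ is noted.
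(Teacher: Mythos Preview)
Your proof is correct and follows essentially the same route as the paper's: both arguments use that $\widetilde{w_1'}$ is an involution with a one-dimensional $(-1)$-eigenspace to diagonalize it as $\diag[-1,1,\dots,1]$, and then perform the identical shift $v_2 \mapsto v_2 - \tfrac{1}{2}v_1$ to produce the off-diagonal~$1$. Your version is more explicit in justifying the rank of $\widetilde V_1^1$ via reduction modulo~$J$ (the paper simply asserts it, implicitly invoking Proposition~\ref{pr1_2}), but the substance is the same.
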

\begin{proof}
Since $\widetilde w_1$ is an involution and $\widetilde V_1^1$ has the dimension $1$, then there exists a basis $\{ e_1,e_2,\dots, e_l\}$, such that $\widetilde {w_1'}$ in it has the form $\diag
[-1,1,\dots,1]$. In the basis $\{ e_1, e_2-1/2e_1,e_3,\dots, e_l\}$
the matrix $\widetilde {w_1'}$ has the obtained form.
\end{proof}

\begin{lemma}\label{l3_4}
For the root system $B_l$, $l>2$ if $\widetilde w_1=\widetilde{w_1'}$, \dots, $\widetilde w_{i-1}=\widetilde{w_{i-1}'}$, $i<l$, then we can choose a basis in
$\widetilde V$ such that in this basis  $\widetilde w_1=\widetilde{w_1'}$, \dots, $\widetilde w_{i-1}=\widetilde{w_{i-1}'}$,
$\widetilde{w_i'}=\widetilde w_i$.
\end{lemma}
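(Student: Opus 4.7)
The plan is to build a change of basis $g \in \GL_l(R, J)$ (equal to $I$ modulo $J$) which centralizes $\widetilde w_1, \ldots, \widetilde w_{i-1}$ and satisfies $g^{-1} \widetilde{w_i'}\, g = \widetilde w_i$. I will prescribe $g$ by specifying the new basis vectors explicitly in terms of the current ones.

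First I locate the key eigenvectors. Since $\widetilde{w_i'}$ commutes with every $\widetilde w_j = \widetilde{w_j'}$ for $|i-j| \geq 2$, Lemma~\ref{l3_1} yields $\widetilde V_1^{i'} \subseteq \widetilde V_0^j$ and $\widetilde V_1^j \subseteq \widetilde V_0^{i'}$. The second family of inclusions combined with the fact that the involution $\widetilde{w_i'}$ acts on the rank-one module $Re_j$ by $\pm 1$, and $\widetilde{w_i'} \equiv \widetilde w_i \pmod J$, upgrades to $\widetilde{w_i'} e_j = e_j$ for all $j \leq i-2$. Computing $\bigcap_{j \leq i-2} \widetilde V_0^j$ in the current basis by iterating the $+1$-eigenspace conditions $v_{j-1} - 2 v_j + v_{j+1} = 0$ shows it is spanned by $\mathbf u := e_1 + 2 e_2 + \cdots + (i-1) e_{i-1}$ and $e_i, e_{i+1}, \ldots, e_l$. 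By Proposition~\ref{pr1_2}, $\widetilde V_1^{i'}$ is free of rank $1$; its generator, normalized to reduce to $e_i$ modulo $J$, takes the shape
\[
f_i \;=\; \alpha\,\mathbf u + e_i + \sum_{k > i} \beta_k e_k, \qquad \alpha, \beta_k \in J.
\]
A local analysis (involution property plus commutation with $\widetilde w_{i-2}$) gives $\widetilde{w_i'} e_{i-1} = e_{i-1} + c f_i$ and $\widetilde{w_i'} e_{i+1} = e_{i+1} + d f_i$ for units $c, d \equiv 1 \pmod J$. Moreover, for each $j \geq i + 2$, Lemma~\ref{l3_1} applied to the commuting pair $\widetilde{w_i'}, \widetilde{w_j'}$ produces a free rank-one $\widetilde V_1^{(j)'} \subseteq \widetilde V_0^{i'} \cap \bigcap_{k \leq i-1} \widetilde V_0^k$; let $f_j'$ be a generator with $f_j' \equiv e_j \pmod J$.

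I now define the new basis: keep $e_j$ for $j \leq i-2$, and set new $e_{i-1} := c^{-1} e_{i-1}$, new $e_i := f_i$, new $e_{i+1} := d^{-1} e_{i+1}$, new $e_j := f_j'$ for $j \geq i + 2$. Each of these is congruent to its old counterpart modulo $J$, so the resulting basis-change matrix $g$ lies in $\GL_l(R, J)$, and a direct calculation shows that in the new basis $\widetilde{w_i'}$ has matrix exactly $\widetilde w_i$.

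The main obstacle is verifying that $g$ commutes with each $\widetilde w_k$ for $k \leq i-1$. For $k \leq i-2$ this is automatic: $f_i \in \widetilde V_0^k$ by the Step~1 intersection, each $f_j'$ lies in $\widetilde V_0^k$ by Lemma~\ref{l3_1} applied to $\widetilde{w_j'}$ and $\widetilde w_k$ (both commute for $|j-k| \geq 3$), and the scalar factors $c^{-1}, d^{-1}$ are harmless. The delicate case is $k = i - 1$: since $\widetilde w_{i-1} e_i = e_{i-1} + e_i$, the substitution $e_i \leftarrow f_i$ interacts through the $\alpha\mathbf u$-term with $\widetilde w_{i-1}$ (one computes $\widetilde w_{i-1} f_i = f_i + (1 - i\alpha) e_{i-1}$), and the centralizing identity $\widetilde w_{i-1} g = g \widetilde w_{i-1}$ on $e_i$ reduces to the single scalar relation
\[
c(1 - i\alpha) = 1.
\]
This identity is forced by the braid relation $\widetilde{w_i'}\, \widetilde w_{i-1}\, \widetilde{w_i'} = \widetilde w_{i-1}\, \widetilde{w_i'}\, \widetilde w_{i-1}$, which holds because $\varphi_1$ is a homomorphism and the canonical matrices $\widetilde w_{i-1}, \widetilde w_i$ satisfy the braid relation exactly (a direct $3 \times 3$ check on the $A_2$-block corresponding to $\{\alpha_{i-1}, \alpha_i\}$); applying both sides to $e_i$ and reading off the coefficient of $e_{i-1}$ yields precisely $1 = c(1 - i\alpha)$, completing the proof.
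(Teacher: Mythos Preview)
Your construction has a genuine gap in the verification that $g$ centralizes the earlier $\widetilde w_k$. You assert that ``for $k\le i-2$ this is automatic \ldots\ the scalar factors $c^{-1},d^{-1}$ are harmless,'' but this is false for $k=i-2$ (and the same problem recurs for $k=i-1$ acting on $e_{i-2}$). Indeed, $\widetilde w_{i-2}$ sends $e_{i-1}\mapsto e_{i-2}+e_{i-1}$, so in your new basis
\[
\widetilde w_{i-2}(\text{new }e_{i-1})=\widetilde w_{i-2}(c^{-1}e_{i-1})=c^{-1}e_{i-2}+c^{-1}e_{i-1}=c^{-1}(\text{new }e_{i-2})+(\text{new }e_{i-1}),
\]
whereas the standard matrix $\widetilde w_{i-2}$ would require coefficient $1$ on $\text{new }e_{i-2}$. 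Thus $g^{-1}\widetilde w_{i-2}\,g=\widetilde w_{i-2}$ forces $c=1$, which you have not proved. The braid relation you invoke gives only $c(1-i\alpha)=1$; I checked that applying the braid relation to $e_{i-2}$, $e_{i-1}$, and $f_i$, together with the commutation $[\widetilde w_{i-2},\widetilde{w_i'}]=1$, yields no further constraint beyond $c(1-i\alpha)=1$, so $c$ is genuinely free at this stage. Consequently your chosen $g$ does \emph{not} preserve $\widetilde w_{i-2}$ (for $i\ge 3$), and the argument fails.

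The paper's proof avoids this by a different mechanism: rather than insisting that each basis change centralize all of $\widetilde w_1,\dots,\widetilde w_{i-1}$, it first localizes to the $3\times 3$ block on positions $i-1,i,i+1$ and then uses two explicit triangular basis changes there together with the \emph{equations} coming from the involution condition and the braid relation $\widetilde w_{i-1}\widetilde{w_i'}\widetilde w_{i-1}=\widetilde{w_i'}\widetilde w_{i-1}\widetilde{w_i'}$ to pin down the entries one by one. In that approach one never tries to conjugate by a prescribed $g$ and then check centralizing; instead the relations themselves force the remaining parameters to their standard values. Your eigenspace approach is more conceptual, but to make it work you would need either to prove $c=1$ (and hence $\alpha=0$) directly from the available relations, or to replace the naive rescaling of $e_{i-1}$ by a more subtle modification that genuinely lies in the centralizer of $\langle\widetilde w_1,\dots,\widetilde w_{i-1}\rangle$.
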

\begin{proof}
An intersection of modules $\widetilde V_0^1$,  $\widetilde V_0^2$, \dots, $\widetilde V_0^i$ is a free module of dimension $\ge l-i-1$. Therefor we can suppose that $\widetilde
{w_2'}$ is $\begin{pmatrix} *& 0\\
0& E_{l-i-1} \end{pmatrix}$. Since $\widetilde{w_i'}$ commutes with $\widetilde w_1$,\dots, $\widetilde w_{i-1}$ and is an involution, then the matrix $\widetilde{w_i'}$ is identical on the first $i-2$ vectors. Now we can consider not all module~$\widetilde V$, but its restriction on three basic vectors with numbers $i-1,i,i+1$. Let
$$
\widetilde {w_i'}=\begin{pmatrix} a_1& a_2& a_3\\
b_1& b_2& b_3\\
c_1& c_2& c_3
\end{pmatrix}.
$$
Making the basis change with the matrix
$$
\begin{pmatrix}
E_{i-2}& 0& 0& 0& 0\\
0& b_1& (1-b_1)/2& 0& 0\\
0& 0& 1& 0& 0\\
0& 0& 0& 1& 0\\
0& 0& 0& 0& E_{l-i-1}
\end{pmatrix},
$$
we have that  $\widetilde w_1$,\dots, $\widetilde w_{i-1}$ are not moved, but $\widetilde
{w_2'}$ are moved to
$$
\widetilde \varphi_1(w_1)=\begin{pmatrix} a_1'& a_2'& a_3'\\
1& b_2'& b_3'\\
c_1'& c_2'& c_3'
\end{pmatrix}.
$$
Now use the following conditions: $\widetilde {w_i'}^2=E$ (cond. 1) and $\widetilde w_{i-1}\widetilde
{w_i'} \widetilde w_{i-1}=\widetilde {w_i'} \widetilde
w_{i-1} \widetilde {w_i'}$ (cond. 2). If we take the sum of these two conditions, then the second line of the obtained
 sum directly gives us $a_1=1$, $a_2=a_3=0$.

The position $(2,3)$ of the second condition gives $b_3(b_2+c_3)=0$, and since $b_2$ is invertible, we have $c_3=-b_2$.

Additionally we make the basis change with the help of the matrix
$$
\begin{pmatrix}
E_{i-2}& 0& 0& 0& 0\\
0& 1& 0& 0& 0\\
0& 0& 1& 0& 0\\
0& 0& 1+b_2& b_3& 0\\
0& 0& 0& 0& E_{l-i-1}
\end{pmatrix},
$$
the elements $\widetilde w_1$,\dots, $\widetilde w_{i-1}$ are not moved, but $\widetilde
{w_2'}$ is moved to
$$
\widetilde \varphi_1(w_1)=\begin{pmatrix} 1& 0& 0\\
1& -1& 1\\
x& y& 1
\end{pmatrix}.
$$
Now from the first condition it directly follows $x=y=0$, that is what we need.
\end{proof}

\begin{lemma}\label{l3_5}
For the root system $B_l$ if $\widetilde w_1=\widetilde{w_1'}$, \dots, $\widetilde w_{l-1}=\widetilde{w_{l-1}'}$,  then one can choose a basis in
$\widetilde V$ such that  $\widetilde w_1=\widetilde{w_1'}$, \dots, $\widetilde w_{l-1}=\widetilde{w_{l-1}'}$,
$\widetilde{w_l'}=\widetilde w_l$ in this basis.
\end{lemma}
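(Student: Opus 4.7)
The proof parallels Lemma~\ref{l3_4}, with two substantive differences arising from the fact that $\alpha_l$ is the short root: the canonical form of $\widetilde w_l$ is a $2\times 2$ block rather than $3\times 3$, and the Weyl relation with $\widetilde w_{l-1}$ is the braid of length four $(s_{l-1}s_l)^4 = 1$ instead of length three.

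Since $\alpha_l$ is orthogonal to $\alpha_1,\ldots,\alpha_{l-2}$ in the Dynkin diagram, the matrix $\widetilde{w_l'}$ commutes with $\widetilde w_1,\ldots,\widetilde w_{l-2}$; combined with $\widetilde{w_l'}^2 = E$, the intersection-of-invariant-subspaces argument used in Lemma~\ref{l3_4} (together with Lemma~\ref{l3_1} and Proposition~\ref{pr1_2}) yields a basis change preserving $\widetilde w_1,\ldots,\widetilde w_{l-1}$ in canonical form and making $\widetilde{w_l'}$ act as the identity on $e_1,\ldots,e_{l-2}$, so that its only nontrivial action lives on the submodule spanned by $e_{l-1}, e_l$. (For $l = 2$ this preparatory step is vacuous.) Writing $\widetilde{w_l'}|_{e_{l-1}, e_l} = \begin{pmatrix} a & b \\ c & d \end{pmatrix}$, the congruence $\widetilde{w_l'} \equiv \widetilde w_l \pmod J$ gives $a\equiv 1$, $b\equiv 0$, $c\equiv 2$, $d\equiv -1$, so $a,c\in R^*$; the involution identity forces $b(a+d) = c(a+d) = 0$, whence (by invertibility of $c$) $d = -a$ and $bc = 1 - a^2$.

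Next, apply the length-four braid relation
$$
\widetilde w_{l-1}\widetilde{w_l'}\widetilde w_{l-1}\widetilde{w_l'} = \widetilde{w_l'}\widetilde w_{l-1}\widetilde{w_l'}\widetilde w_{l-1}.
$$
Restrict both sides to the block spanned by $e_{l-2}, e_{l-1}, e_l$ (by $e_{l-1}, e_l$ for $l = 2$), on which $\widetilde w_{l-1}$ has the standard form from Lemma~\ref{l3_4}. A direct multiplication and comparison of entries (the cleanest being the $(3,1)$- or $(2,2)$-positions) yields $c(c - 2a) = 0$, hence $c = 2a$, and consequently $b = (1 - a^2)/(2a)$. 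The remaining entry-by-entry comparisons are automatically consistent with these two identities via the relation $bc = 1 - a^2$. This braid-relation computation is the central technical step.

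Finally, set $q := (a-1)/2 \in J$ (well-defined since $1/2 \in R$ and $a - 1 \in J$), and let $M$ be the matrix acting as the identity on $e_1,\ldots,e_{l-1}$ and mapping $e_l \mapsto q\, e_{l-1} + (1 + 2q)\, e_l$. A direct check, using $\widetilde w_{l-1}(e_l) = e_{l-1} + e_l$ and the fact that $\widetilde w_j$ fixes $e_l$ for $j\leqslant l-2$, verifies that $M$ commutes with every $\widetilde w_j$, $j = 1,\ldots,l-1$, so the already-canonical matrices are undisturbed. Substituting $a = 1 + 2q$, $c = 2a$, $b = (1 - a^2)/(2a)$ into $M^{-1}\widetilde{w_l'} M$ yields exactly $\widetilde w_l$, completing the proof. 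The main obstacle throughout is the braid-relation calculation giving $c = 2a$; once that identity is in hand, the construction of $M$ and the final verification are mechanical.
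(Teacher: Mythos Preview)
Your argument follows the paper's template, differing only in the order of two steps: the paper first applies its basis change to normalize $a=1$, $d=-1$, then uses the involution again to obtain $b=0$, and finally invokes the length-four braid relation to get $c=2$; you instead extract $c=2a$ from the braid relation before changing basis. The computations leading to $d=-a$, $bc=1-a^2$, and $c=2a$ are correct, and $M^{-1}\widetilde{w_l'}M=\widetilde w_l$ indeed checks out on the $(l-1,l)$ block.

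There is, however, a gap in the claim that $M$ commutes with every $\widetilde w_j$, $j\leqslant l-1$. Your justification uses only that $\widetilde w_j$ fixes $e_l$ for $j\leqslant l-2$, but commutation with $M$ also requires that $\widetilde w_j$ fix $M(e_l)=qe_{l-1}+(1+2q)e_l$, hence fix $e_{l-1}$; this fails for $j=l-2$, since $\widetilde w_{l-2}(e_{l-1})=e_{l-2}+e_{l-1}$. A direct calculation gives $M^{-1}\widetilde w_{l-2}M=\widetilde w_{l-2}+qE_{l-2,l}$, so the canonical form of $\widetilde w_{l-2}$ is lost whenever $l\geqslant 3$. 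The paper's own basis-change matrix has precisely the same defect, so this is an oversight inherited from the source rather than a flaw peculiar to your reordering; nonetheless, as written neither argument produces a basis in which all of $\widetilde w_1,\ldots,\widetilde w_l$ are simultaneously in canonical form.
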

\begin{proof}
Since $\widetilde{w_l'}$ commutes with all $\widetilde w_1$, \dots, $\widetilde w_{l-2}$ and has the order two, then on basis elements $e_1,\dots,e_{l-2}$ it is identical. Therefore we only need to consider last two basis elements, the matrix under consideration can be supposed a  $2\times 2$ matrix of the form
$$
\begin{pmatrix}
a& b\\
c& d
\end{pmatrix}.
$$

Since it has the order two, we get $c(a+d)=0$, and since $c\equiv 2\mod J$, then $a+d=0$.

After the basis change with the matrix
$$
\begin{pmatrix}
E_{l-2}& 0& 0\\
0& 1& (1-a)/c\\
0& 0 & 1+2(1-a)/c
\end{pmatrix}
$$
the elements $\widetilde w_1$, \dots, $\widetilde w_{l-1}$ are not moved, and $\widetilde{w_l'}$ becomes
$$
\begin{pmatrix}
E_{l-2}& 0& 0\\
0& 1& b\\
0& c& -1
\end{pmatrix}.
$$
Again from the fact that the matrix has the order two, and $c$ is invertible, we get $b=0$.

Now use the condition
$$
\widetilde w_{l-1}\widetilde
{w_l'} \widetilde w_{l-1}\widetilde{w_l'}=\widetilde {w_l'} \widetilde
w_{l-1} \widetilde {w_l'}\widetilde w_{l-1},
$$
it directly gives $c=2$.
\end{proof}

Therefore, we now can come from the isomorphism $\varphi_1$ under consideration to an isomorphism
 $\varphi_2$, with all properties of $\varphi_1$,
and such that $\varphi_2(w_i)=w_i$ for all $i=1,\dots,l$.

We suppose that we have the isomorphism~$\varphi_2$ with these
properties.

\section{Images of $x_{\alpha_i}(1)$ and $h_{\alpha_i}(t)$.}

In this section we want to prove that under one more basis change we can come to an isomorphism $\varphi_3$ with all properties of the
previous one and such that $\varphi_3(x_{\alpha}(1))=x_\alpha(1)$ for all $\alpha\in \Phi$, $\varphi_3(h_{\alpha}(t))=h_{\alpha}(s)$ for all $t\in R^*$, $\alpha\in \Phi$.

Unfortunately it is not possible to make a proof for all systems $B_l$, $l\geqslant 2$, simultaneously. We consider separately the cases $l=2$, $l=3$, $l=4$, $l\geqslant 5$.

Note that the case $B_2$ was studied in the paper~~\cite{korni2}.

\subsection{The case $B_3$.}

In this case we have the following roots: $\alpha_1=e_1-e_2$, $\alpha_2=e_2-e_3$, $\alpha_3=e_3$, $\alpha_4=\alpha_1+\alpha_2=e_1-e_3$, $\alpha_5=\alpha_2+\alpha_3=e_2$, $\alpha_6=\alpha_1+\alpha_2+\alpha_3=e_1$, $\alpha_7=\alpha_2+2\alpha_3=e_2+e_3$, $\alpha_8=\alpha_1+\alpha_2+2\alpha_3=e_1+e_3$, $\alpha_9=\alpha_1+2\alpha_2+2\alpha_3=e_1+e_2$.

Let $x_1=\varphi_2(x_{\alpha_1}(1))$, $x_3=\varphi_2(x_{\alpha_3}(1))$.

Note that
\begin{align*}
h_{\alpha_1}(-1)&=\diag[1,1,-1,-1,1,1,-1,-1,-1,-1,-1,-1,-1,-1, -1,-1, 1,1,1,1,1],\\
h_{\alpha_2}(-1)&=\diag[-1,-1,1,1,-1,-1,-1,-1,-1,-1,1,1,1,1,-1,-1,-1,-1,1,1,1].
\end{align*}

Recall for the convenience that

\begin{align*}
w_1&=-e_{\alpha_1,-\alpha_1}-e_{-\alpha_1,\alpha_1}+e_{\alpha_2,\alpha_4}+e_{-\alpha_2,-\alpha_4}-e_{\alpha_4,\alpha_2}
-e_{-\alpha_4,\alpha_2}+e_{\alpha_3,\alpha_3}+e_{-\alpha_3,-\alpha_3}+\\
&\ \ \ \ \  +e_{\alpha_5,\alpha_6}+e_{-\alpha_5,-\alpha_6}
-e_{\alpha_6,\alpha_5}-e_{-\alpha_6,\alpha_5}+e_{\alpha_7,\alpha_8}+e_{-\alpha_7,-\alpha_8}-e_{\alpha_8,\alpha_7}
-e_{-\alpha_8,-\alpha_7}+\\
&\ \ \ \ \ +e_{\alpha_9,\alpha_9}+e_{-\alpha_9,-\alpha_9}-e_{h_1,h_1}+e_{h_1,h_2}+e_{h_2,h_2}+e_{h_3,h_3};\\
w_2&=-e_{\alpha_2,-\alpha_2}-e_{-\alpha_2,\alpha_2}+e_{\alpha_1,\alpha_4}+e_{-\alpha_1,-\alpha_4}-e_{\alpha_4,\alpha_1}
-e_{-\alpha_4,\alpha_1}+\\
&\ \ \ \ \ +e_{\alpha_3,\alpha_5}+e_{-\alpha_3,-\alpha_5}
-e_{\alpha_5,\alpha_3}-e_{-\alpha_5,\alpha_3}+e_{\alpha_6,\alpha_6}+e_{-\alpha_6,-\alpha_6}
+e_{\alpha_7,\alpha_7}+e_{-\alpha_7,-\alpha_7}+\\
&\ \ \ \ \ +e_{\alpha_8,\alpha_9}+e_{-\alpha_8,-\alpha_9}-e_{\alpha_9,\alpha_8}
-e_{-\alpha_9,-\alpha_8}+
e_{h_1,h_1}+e_{h_2,h_1}-e_{h_2,h_2}+e_{h_2,h_3}+e_{h_3,h_3};\\
w_3&=e_{\alpha_1,\alpha_1}+e_{\alpha_1,\alpha_1}+e_{\alpha_2,\alpha_7}+e_{-\alpha_2,-\alpha_7}+e_{\alpha_7,\alpha_2}
+e_{-\alpha_7,\alpha_2}-e_{\alpha_3,-\alpha_3}-e_{-\alpha_3,\alpha_3}+\\
&\ \ \ \ \ +e_{\alpha_4,\alpha_8}+e_{-\alpha_4,-\alpha_8}
+e_{\alpha_8,\alpha_4}+e_{-\alpha_8,\alpha_4}+e_{\alpha_5,\alpha_5}+e_{-\alpha_5,-\alpha_5}+e_{\alpha_6,\alpha_6}
+e_{-\alpha_6,-\alpha_6}+\\
&\ \ \ \ \ +e_{\alpha_9,\alpha_9}+e_{-\alpha_9,-\alpha_9}+e_{h_1,h_1}+e_{h_2,h_2}+2e_{h_3,h_2}-e_{h_3,h_3};\\
X_1&= -e_{h_1,-\alpha_1}+2e_{\alpha_1,h_1}-e_{\alpha_1,h_2}+e_{\alpha_4,\alpha_2}-e_{-\alpha_2,-\alpha_4}+
e_{\alpha_6,\alpha_5}-e_{-\alpha_5,-\alpha_6}+\\
&\ \ \ \ \ +e_{\alpha_8,\alpha_7}-e_{-\alpha_7,-\alpha_8},\\
X_3&=-e_{h_3,-\alpha_3}+2e_{\alpha_3,h_3}-2e_{\alpha_3,h_2}+2e_{\alpha_5,\alpha_2}-e_{-\alpha_2,-\alpha_5}
+2e_{\alpha_6,\alpha_4}-e_{-\alpha_4,-\alpha_6}-\\
&\ \ \ \ \ -e_{\alpha_7,\alpha_5}+2e_{-\alpha_5,-\alpha_7}
-e_{\alpha_8,\alpha_6}+2e_{-\alpha_6,-\alpha_8}.
\end{align*}

Since the matrices $x_1$ and $x_3$ commute with $h_{\alpha_1}(-1)$, then both of them are separated into the blocks corresponding to the basis parts $\{ v_{1}, v_{-1}, v_{3}, v_{-3},  v_{9}, v_{-9}, V_1, V_2, V_3\}$ and $\{ v_2, v_{-2}, v_4, v_{-4}, v_5, v_{-5}$, $v_6, v_{-6}, v_7, v_{-7}, v_8, v_{-8}\}$.

Since the matrix $x_1$ commutes with $w_3$ and $w_2w_3w_2w_1w_2^{-1}w_3^{-1}w_2^{-1}$, it follows that on the first block it has the form
{\tiny
$$
\begin{pmatrix}
t_1& t_2& t_3& -t_3& t_4& -t_4& -2t_5& t_5& 0\\
t_6& t_7& t_8& -t_8& t_9& -t_9& -2t_{10}& t_{10}& 0\\
t_{11}& t_{12}& t_{13}& t_{14}& -t_{15}& t_{15}& -2t_{16}-2t_{17}& t_{16}& t_{17}\\
-t_{11}& -t_{12}& t_{14}& t_{13}& t_{15}& -t_{15}& 2t_{16}+2t_{17}& t_{16}-2t_{17}& t_{17}\\
t_{18}& t_{19}& -t_{20}& t_{20}& t_{21}& t_{22}& t_{23}& t_{24}& 0\\
-t_{18}& -t_{19}& t_{20}& -t_{20}& t_{22}& t_{21}& -t_{23}& t_{23}+t_{24}& 0\\
t_{25}& t_{26}& -t_{27}& t_{27}& t_{28}& t_{29}& t_{30}& t_{31}& 0\\
0& 0& 0& 0& t_{28}+t_{29}& t_{28}+t_{29}& 0& 2t_{31}+t_{30}& 0\\
0& 0& t_{32}& t_{32}& t_{28}+t_{29}& t_{28}+t_{29}& 0& t_{33}& 2t_{31}-t_{33}+t_{30}
\end{pmatrix};
$$
}
and on the second block it is
{\tiny
$$
\left(\begin{array}{cccccccccccc}
t_{34}& t_{35}& t_{36}& t_{37}& t_{38}& t_{39}& t_{40}& t_{41}& t_{42}& t_{43}& t_{44}& t_{45}\\
t_{46}& t_{47}& t_{48}& t_{49}& t_{50}& t_{51}& t_{52}& t_{53}& t_{54}& t_{55}& t_{56}& t_{57}\\
-t_{49}& -t_{48}& t_{47}& t_{46}& -t_{53}& -t_{52}& t_{51}& t_{50}& -t_{57}& -t_{56}& t_{55}& t_{54}\\
-t_{37}& -t_{36}& t_{35}& t_{34}& -t_{41}& -t_{40}& t_{39}& t_{38}& -t_{45}& -t_{44}& t_{43}& t_{42}\\
t_{58}& t_{59}& t_{60}& t_{61}& t_{62}& t_{63}& t_{64}& t_{65}& t_{58}& t_{59}& t_{60}& t_{61}\\
t_{66}& t_{67}& t_{68}& t_{69}& t_{70}& t_{71}& t_{72}& t_{73}& t_{66}& t_{67}& t_{68}& t_{69}\\
-t_{69}& -t_{68}& t_{67}& t_{66}& -t_{73}& -t_{72}& t_{71} & t_{70}& -t_{69}& -t_{68}& t_{67}& t_{66}\\
-t_{61}& -t_{60}& t_{59}& t_{58}& -t_{65}& -t_{64}& t_{63}& t_{62}& -t_{61}& -t_{60}& t_{59}& t_{58}\\
t_{42}& t_{43}& t_{44}& t_{45}& t_{38}& t_{39}& t_{40}& t_{41}&t_{34}& t_{35}& t_{36}& t_{37}\\
t_{54}& t_{55}& t_{56}& t_{57}& t_{38}& t_{39}& t_{40}& t_{41}&t_{34}& t_{35}& t_{36}& t_{37}\\
-t_{57}& -t_{56}& t_{55}& t_{54}& -t_{53}& -t_{52}& t_{51}& t_{50}&-t_{49}& -t_{48}& t_{47}& t_{46}\\
-t_{45}& -t_{44}& t_{43}& t_{42}& -t_{41}& -t_{40}& t_{39}& t_{38}&-t_{37}& -t_{36}& t_{35}& t_{34}
\end{array}\right).
$$
}

Similarly, since the matrix $x_3$ commutes with $w_1$ and $w_2w_3w_2^{-1}$, we have that on the first block it is
{\tiny
$$
\begin{pmatrix}
u_1& u_2& u_3& u_4& u_5& -u_5& -2u_6-2u_7& u_6& u_7\\
u_2& u_1& -u_3& -u_4& -u_5& u_5& -2u_6-2u_7& u_6+2u_7& -u_7\\
-u_8& u_8& u_9& u_{10}& u_8& -u_8& 0& -u_{11}& u_{11}\\
u_{12}& -u_{12}& u_{13}& u_{14}& -u_{12}& u_{12}& 0&  -u_{15}& u_{15}\\
u_5& -u_5& -u_3& -u_4& u_1& u_2& 0& u_6+2u_7& -u_7\\
-u_5& u_5& u_3& u_4& u_2& u_1& 0& u_6& u_7\\
-u_{16}& -u_{16}& 0& 0& u_{16}& u_{16}& u_{20}+u_{21}&  0& 0\\
0& 0& 0& 0& 2u_{16}& 2u_{16}& 0& u_{20}+u_{21}& 0\\
-u_{17}& u_{17}& u_{18}& u_{19}& 2u_{16}+u_{17}& 2u_{16}+u_{17}& u_{20}& u_{21}
\end{pmatrix};
$$
}
and on the second block it is
{\tiny
$$
\left(\begin{array}{cccccccccccc}
u_{22}& u_{23}& -u_{24}& -u_{25}& u_{27}& u_{26}& -u_{28}& u_{28}& u_{29}& u_{30}& u_{25}& u_{24}\\
u_{31}& u_{32}& -u_{33}& -u_{34}& u_{36}& u_{35}& -u_{37}& u_{37}& u_{38}& u_{39}& u_{34}& u_{33}\\
u_{24}& u_{25}& u_{22}& u_{23}& u_{28}& -u_{28}& u_{27}& u_{26}& -u_{25}& -u_{24}& u_{29}& u_{30}\\
u_{33}& u_{34}& u_{31}& u_{32}& u_{37}& -u_{37}& u_{36}& u_{35}& -u_{34}& -u_{33}& u_{38}& u_{39}\\
u_{40}& u_{41}& -u_{42}& -u_{43}& u_{44}& u_{45}& -u_{46}& u_{46}& u_{47}& u_{48}& u_{43}& u_{42}\\
u_{48}& u_{47}& u_{42}& u_{43}& u_{45}& u_{44}& u_{46}& -u_{46}& u_{41}& u_{40}& -u_{43}& -u_{42}\\
u_{42}& u_{43}& u_{40}& u_{41}& u_{46}& -u_{46}& u_{44}& u_{45}& -u_{43}& -u_{42}& u_{47}& u_{48}\\
-u_{42}& -u_{43}& u_{48}& u_{47}& -u_{46}& u_{46}& u_{45}& u_{44}& u_{43}& u_{42}& u_{41}& u_{40}\\
u_{39}& u_{38}& u_{33}& u_{34}& u_{35}& u_{36}& u_{37}& -u_{37}& u_{32}& u_{31}& -u_{34}& -u_{33}\\
u_{30}& u_{29}& u_{24}& u_{25}& u_{26}& u_{27}& u_{28}& -u_{28}& u_{23}& u_{22}& -u_{25}& -u_{24}\\
-u_{33}& -u_{34}& u_{39}& u_{38}& -u_{37}& u_{37}& u_{35}& u_{36}& u_{34}& u_{33}& u_{32}& u_{31}\\
-u_{24}& -u_{25}& u_{30}& u_{29}& -u_{28}& u_{28}& u_{26}& u_{27}& u_{25}& u_{24}& u_{23}& u_{22}
 \end{array}\right).
$$
}

We have that $t_1,t_7,t_{13},t_{21},t_{30},t_{34},t_{47},t_{62},t_{71},u_1,u_9,u_{14},u_{21},u_{22},u_{32},u_{44}$ are equivalent to~$1$ modulo radical, $t_2,t_5,t_{26},t_{49},t_{73},u_{10},u_{19},u_{35},u_{39}$ are $-1$ modulo radical, $u_{11},u_{40}$ are $2$ modulo radical, all other elements are form the radical. There are $121$ variables $t_1,\dots, t_{73}, u_1,\dots, u_{48}$.

We apply step by step three basis changes, commuting with each other and with all matrices $w_{i}$. These changes are represented by matrices $C_1$, $C_2$, $C_3$. The matrix $C_1$  is block-diagonal with  $2\times 2$ blocks. On all $2\times 2$ blocks, corresponding to long and short roots, the matrix  $C_1$  has the form
$$
\begin{pmatrix}
1& -u_{18}/u_{19}\\
-u_{18}/u_{19}& 1
\end{pmatrix}.
$$

On the last block it is identical.

The matrix $C_2$  is diagonal, it is identical on the last block,  it is scalar with~$a$ on all places corresponding to the roots.

The matrix $C_3$ is the sum of an identical matrix and  $2\times 2$ blocks on the following places: the lines correspond to the basis part $\{ v_\alpha, v_{-\alpha}\}$, $\alpha=e_1-e_2, e_2+e_3, e_1-e_3$, the rows correspond to the basis part $\{ v_\beta,v_{-\beta}\}$, $\beta=e_3, e_1,e_2$, and the matrix
$$
\begin{pmatrix}
t_3& -t_3\\
-t_3& t_3
\end{pmatrix};
$$
if the lines correspond to the basis part $\{ v_\alpha, v_{-\alpha}\}$, $\alpha=e_1+e_2, e_2-e_3, e_1+e_3$, then the rows again correspond to the basis part $\{ v_\beta,v_{-\beta}\}$, $\beta=e_3, e_1,e_2$, and the matrix is opposite.

Since all three matrices commute with all $w_{i}$,  then after the basis change with any of these matrices all conditions on the elements of the matrices $x_1$ and $x_3$ still hold.

At first we apply the basis change with $C_1$. After it new  $u_{18}$  in $x_3$ becomes equal to zero (for the convenience of notations we do not rename variables). Then choose  $a=-1/u_{19}$ (where $u_{19}$ is the last one). We see that $u_{18}$ is not moved and $u_{19}$ becomes equal to $-1$. Finally, applying the last change we get $t_3=0$.

Now we can suppose that $u_{18}=0, u_{19}=-1, t_{3}=0$, and we have $118$ variables.

Introduce $x_{4}=\varphi_2(x_{\alpha_4}(1))=w_2x_1{w_2^{-1}}$, $x_{2}=\varphi_2(x_{\alpha_2}(1))=w_1x_{4}{w_1^{-1}}$, $x_{7}=\varphi_2(x_{\alpha_7}(1))=w_3x_2{w_3^{-1}}$, $x_{5}=\varphi_2(x_{\alpha_5}(1))=w_2x_{3}{w_2^{-1}}$.

Use now the following conditions, that are true for the elements $w_i$ and $x_i$:
\begin{align*}
Con1&=(h_2x_1h_2x_1=E);\\
Con2&=(x_1x_{4}=x_{4}x_1);\\
Con3&=(x_1x_{3}=x_{3}x_1);\\
Con4&=(x_1x_2=x_4x_2x_1);\\
Con5&=(x_7x_{3}=x_{3}x_7);\\
Con6&=(h_2x_3h_2x_3=E);\\
Con7&=(x_7^2x_3x_{5}=x_{5}x_3).
\end{align*}

Note that every matrix condition is the set of $441$
polynomial identities, where all polynomials have integer coefficients and depend of the variables $t_i, u_j$. Temporarily rename all variables in $v_1,\dots,v_{118}$.

Suppose that one of our polynomials can be rewritten in the form
\begin{multline*}
(v_{k_0}-\overline v_{k_0})A+(v_1-\overline v_{1})B_1+\dots\\
\dots+(v_{k_0-1}-\overline v_{k_0-1})B_{k_0-1}+(v_{k_0+1}-\overline v_{k_0+1})B_{k_0+1}+\dots+v_{118}-\overline v_{118})B_{118}=0,
\end{multline*}
where $\overline  v_{i}$ is such an integer number that is equivalent to $v_{i}$ modulo radical, the polynomial $A$ in invertible modulo radical, $B_i$ are some polynomials (the variable $v_{k_0}$ can enter in all polynomial and also in~$A$). Then
\begin{multline*}
v_{k_0}-\overline v_{k_0}=\\
=-\frac{(v_1-\overline v_{1})B_1+\dots+(v_{k_0-1}-\overline v_{k_0-1})B_{k_0-1}+(v_{k_0+1}-\overline v_{k_0+1})B_{k_0+1}+\dots+v_{118}-\overline v_{118})B_{118}}{A},
\end{multline*}
we can substitute  the expression for $v_{k_0}$ in all other polynomial conditions. If we can choose $118$ such conditions that on every step we except one new variable, then on the last step we obtain the expression
$$
(v_{k_{118}}-\overline v_{k_{118}})C=0,
$$
where $C$ is some rational expression of variables
$v_1,\dots, v_{118}$, invertible modulo radical. Therefore,
we can say that $v_{k_{118}}=\overline v_{k_{118}}$, and consequently all other variables are equal to the integer numbers equivalent them modulo radical. The existence of the obtained $118$
conditions is equivalent to the existence of such $118$ conditions that
the square matrix consisting of all coefficients of these conditions modulo radical has an invertible determinant.

Since it is very complicated to write a matrix $118\times 118$, we will  sequentially take the obtained equations, but for  simplicity write coefficients $A$ and $B_i$ modulo radical (in the result these coefficients are just numbers $0$,  $\pm 1$, $\pm 2$).

We write below how the variables are expressed from the conditions (in brackets we write the number of the condition and the position there): $(Con1, 1,1)$: $t_6=2(t_1-1)$; $(Con1, 1,2)$: $t_7=-t_1-2t_5$; $(Con1, 1,5)$: $t_8=2t_{27}$; $(Con1, 1,17)$: $t_{28}=2t_4-t_9+t_{29}$; $(Con1, 1,19)$: $t_{10}=t_{30}-t_1$;  $(Con1, 5,1)$: $t_{11}=0$;  $(Con1, 5,2)$: $t_{12}=2t_{16}+2t_{17}$; $(Con1, 5,5)$: $t_{13}=1$; $(Con1, 5,6)$: $t_{14}=0$; $(Con1, 5,17)$: $t_{15}=0$; $(Con1, 17,1)$: $t_{18}=0$; $(Con1, 17,2)$: $t_{23}=-t_{19}$; $(Con1, 17,5)$: $t_{20}=0$; $(Con1, 19,5)$: $t_{27}=0$; $(Con1, 19,1)$: $t_{1}=1$; $(Con1, 19,20)$: $t_{30}=1-2t_{31}$; $(Con1, 19,18)$: $t_{9}=0$; $(Con1, 2,2)$: $t_{31}=1+t_5$;
$(Con1, 1,20)$: $t_{5}=-1$; $(Con1, 18,18)$: $t_{21}=1$; $(Con1, 6,2)$: $t_{17}=-t_{16}$; $(Con1, 18,17)$: $t_{22}=0$; $(Con1, 21,21)$: $t_{33}=0$; $(Con1, 18,2)$: $t_{19}=0$; $(Con1, 2,1)$: $t_{2}=-1$; $(Con2, 2,2)$: $t_{35}=0$; $(Con2, 5,2)$: $t_{60}=0$; $(Con2, 1,19)$: $t_{47}=1$; $(Con2, 6,2)$: $t_{68}=0$; $(Con2, 7,15)$: $t_{43}=0$;
$(Con2, 17,2)$: $t_{55}=0$; $(Con2, 19,18)$: $t_{42}=t_{29}$; $(Con2, 7,8)$: $t_{34}=1$; $(Con1, 3,3)$: $t_{36}=0$; $(Con1, 3,4)$: $t_{37}=0$; $(Con1, 4,3)$: $t_{48}=2t_{46}$; $(Con2, 1,8)$: $t_{49}=-1$; $(Con2, 4,5)$: $t_{52}=0$; $(Con2, 9,2)$: $t_{58}=t_{16}$; $(Con2, 9,5)$: $t_{64}=0$; $(Con2, 15,4)$: $t_{29}=0$; $(Con2, 20,3)$: $t_{25}=0$; $(Con1, 10,3)$: $t_{61}=0$; $(Con1, 10,7)$: $t_{59}=0$; $(Con2, 1,11)$: $t_{39}=0$; $(Con2, 4,8)$: $t_{46}=0$; $(Con2, 16,2)$ $t_{45}=t_{24}$; $(Con2, 7,2)$: $t_{26}=-1$;
$(Con1, 4,9)$: $t_{41}=0$; $(Con1, 9,12)$: $t_{63}=0$; $(Con2, 4,9)$: $t_{40}=0$; $(Con1, 13,8)$: $t_{44}=0$; $(Con3, 17,2)$: $u_{5}=0$; $(Con3, 2,19)$: $u_{2}=0$; $(Con3, 5,19)$: $u_{8}=0$; $(Con3, 6,19)$: $u_{12}=0$; $(Con3, 5,5)$: $t_{32}=0$; $(Con3, 5,6)$: $t_{16}=0$; $(Con3, 21,2)$: $u_{17}=0$; $(Con3, 1,1)$: $u_{16}=0$; $(Con2, 14,2)$: $t_{56}=-t_{54}$; $(Con2, 14,19)$ and $(Con2, 14,21)$: $t_{54}=t_{24}=0$; $(Con3, 19,19)$: $u_{7}=-u_6$; $(Con4, 5,5)$: $t_{62}=1$; $(Con4, 5,10)$: $t_{65}=0$; $(Con4, 6,21)$: $t_{67}=0$; $(Con4, 9,20)$: $t_{66}=0$; $(Con4, 21,13)$: $t_{4}=0$; $(Con4, 2,13)$: $t_{57}=0$; $(Con4, 6,18)$: $t_{69}=0$; $(Con4, 4,5)$: $t_{50}=0$; $(Con4, 6,6)$: $t_{71}=1$;
$(Con4, 6,5)$: $t_{70}=0$; $(Con4, 17,5)$: $t_{53}=0$; $(Con4, 11,10)$: $t_{72}=0$; $(Con4, 11,5)$: $t_{73}=-1$; $(Con4, 3,5)$: $t_{38}=0$; $(Con4, 3,6)$: $t_{51}=0$; $(Con3, 9,3)$: $u_{42}=0$; $(Con3, 16,3)$: $u_{30}=0$; $(Con3, 12,3)$: $u_{48}=0$; $(Con3, 11,4)$: $u_{41}=0$; $(Con3, 10,4)$: $u_{43}=0$; $(Con3, 4,7)$: $u_{31}=0$; $(Con3, 3,8)$: $u_{23}=0$; $(Con3, 9,12)$: $u_{45}=0$; $(Con3, 8,13)$: $u_{38}=0$; $(Con3, 9,3)$: $u_{1}=u_{20}+u_{21}$; $(Con3, 14,4)$: $u_{25}=0$; $(Con3, 4,4)$: $u_{34}=0$; $(Con3, 4,11)$: $u_{36}=0$; $(Con3, 7,7)$: $u_{24}=0$; $(Con3, 1,20)$: $u_{6}=0$;
$(Con5, 2,8)$: $u_{29}=0$; $(Con5, 9,5)$: $u_{13}=0$; $(Con5, 5,4)$: $u_{47}=0$; $(Con5, 3,6)$: $u_{27}=0$; $(Con5, 9,20)$: $u_{15}=0$; $(Con5, 13,20)$: $u_{20}=0$; $(Con5, 13,19)$: $u_{32}=u_{21}$; $(Con6, 8,8)$: $u_{21}=1$; $(Con6, 17,20)$: $u_{3}=0$; $(Con6, 5,20)$: $u_{9}=1$; $(Con6, 12,9)$: $u_{28}=0$; $(Con6, 6,6)$: $u_{14}=1$; $(Con6, 2,6)$: $u_{4}=0$; $(Con6, 3,3)$: $u_{22}=1$; $(Con6, 10,14)$: $u_{44}=1$; $(Con5, 5,14)$: $u_{40}=u_{11}$; $(Con5, 13,9)$: $u_{26}=0$; $(Con6, 5,6)$: $u_{11}=-2u_{10}$; $(Con6, 12,3)$: $u_{46}=0$; $(Con7, 13,14)$: $u_{10}=-1$; $(Con7, 17,5)$: $u_{37}=0$; $(Con7, 5,7)$: $u_{33}=0$;  $(Con7, 5,14)$: $u_{39}=-1$; $(Con7, 2,16)$: $u_{35}=-1$.

Therefore $x_1=x_{\alpha_1}(1)$ and $x_3=x_{\alpha_3}(1)$. Since all long (and all short) roots are conjugate under the action of the Weil group, we have $\varphi_2(x_{\alpha}(1))=x_\alpha(1)$ for all $\alpha\in \Phi$.

Now let us consider $h_t=\varphi_2(h_{\alpha_1}(t))$. Since $h_t$ commutes with $h_1$, $h_2$, $w_3$, $w_9$, $x_{\alpha_3}(1)$,  $x_{\alpha_9}(1)$, and also $w_1h_tw_1^{-1}h_t=E$, $w_2h_tw_2^{-1}=w_1w_2h_tw_2^{-1}w_1^{-1}h_t$, we directly have $h_t=h_{\alpha_1}(s)$ for some $s\in R^*$.
Similarly, $\varphi_2(h_{\alpha_3}(t))=h_{\alpha_3}(s)$.

\subsection{The case $B_4$.}

In this case we have $32$ roots
$$
\pm e_1,\pm e_2,\pm e_3, \pm e_4, \pm e_i \pm e_j,\qquad 1\leqslant i< j\leqslant 4,
 $$
the adjoint representation is $36$-dimensional. Similarly to the previous case we consider $x_1=\varphi_2(x_{\alpha_1}(1))$ and $x_4=\varphi_2(x_{\alpha_4}(1))$. According to commuting with the elements $h_1$ and $h_3$ the matrix $x_1$ is separated into the blocks
 \begin{align*}
&\{ v_{e_1-e_2}, v_{e_2-e_1}, v_{e_1+e_2}, v_{-e_1-e_2},  v_{e_3-e_4}, v_{e_4-e_3}, v_{e_3+e_4}, v_{-e_3-e_4}, V_1, V_2, V_3, V_4\},\\
&\{ v_{e_1-e_3}, v_{e_3-e_1}, v_{e_1+e_3}, v_{-e_1-e_3},  v_{e_1-e_4}, v_{e_4-e_1}, v_{e_1+e_4}, v_{-e_1-e_4},\\
& \quad \quad \quad \quad \quad v_{e_2-e_3}, v_{e_3-e_2}, v_{e_2+e_3}, v_{-e_2-e_3}, v_{e_2-e_4}, v_{e_4-e_2}, v_{e_2+e_4}, v_{-e_2-e_4}\},\\
&\{ v_{e_1}, v_{-e_1}, v_{e_2}, v_{-e_2}\},\\
& \{ v_{e_3}, v_{-e_3}, v_{e_4}, v_{-e_4}\},
 \end{align*}
according to commuting with $h_1$ and $h_2$ the matrix $x_4$ is separated into the blocks
\begin{align*}
& \{ v_{e_4}, v_{-e_4}, V_1, V_2, V_3, V_4\},\\
& \{ v_{e_1-e_2}, v_{e_2-e_1}, v_{e_1+e_2}, v_{-e_1-e_2}, v_{e_3}, v_{-e_3}, v_{e_3-e_4}, v_{e_4-e_3}, v_{e_3+e_4}, v_{-e_3-e_4}\},\\
&\{ v_{e_1}, v_{-e_1}, v_{e_1-e_4}, v_{e_4-e_1}, v_{e_1+e_4}, v_{-e_1-e_4}, v_{e_2-e_3}, v_{e_3-e_2}, v_{e_2+e_3}, v_{-e_2-e_3}\},\\
&\{ v_{e_2}, v_{-e_2}, v_{e_2-e_4}, v_{e_4-e_2}, v_{e_2+e_4}, v_{-e_2-e_4}, v_{e_1-e_3}, v_{e_3-e_1}, v_{e_1+e_3}, v_{-e_1-e_3}\}.
\end{align*}

At the beginning we consider the matrix $x_1$ on the block $\{ v_{e_3}, v_{-e_3}, v_{e_4}, v_{-e_4}\}$, its inverse image on this block is just identical. We know that $x_1$ commutes with $w_{e_3}$, $w_{e_4}$, $w_{e_3-e_4}$. It gives us the following form of the matrix on the block under consideration:
$$
\begin{pmatrix}
a_1& a_2&  a_3& -a_3\\
a_2& a_1& -a_3& a_3\\
-a_3& a_3& a_1& a_2\\
a_3& -a_3& a_2& a_1
\end{pmatrix}.
$$
Besides, since $h_2x_1h_2x_1=E$, we have the equations $a_1^2+a_2^2+2a_3^2=1$ and $2a_1a_2-2a_3^2=0$. Their sum is $(a_1+a_2)^2=1$, therefore (since $a_1+a_2\equiv 1\mod  J$) we have $a_1+a_2=1$, i.\,e., $a_1=1-a_2$.

Now consider our matrix on the block $\{ v_{e_1}, v_{-e_1}, v_{e_2}, v_{-e_2}\}$. Its inverse image on this block is
$$
\begin{pmatrix}
1& 0& -1& 0\\
0& 1& 0& 0\\
0& 0& 1& 0\\
0& 1& 0& 1
\end{pmatrix}.
$$
Since it commutes with $w_{e_1+e_2}$ we have the form
$$
\begin{pmatrix}
b_1& b_2& b_3& b_4\\
c_1& c_2& c_3& c_4\\
-c_4& -c_3& c_2& c_1\\
-b_4& -b_3& b_2& b_1
\end{pmatrix}.
$$

Now unite these bases together, we can note that for all $w_{\alpha}$, $\alpha\in \Phi$, and all $x_{\beta}(1)$, $\beta$ is a long root, this basis part is an invariant  direct summand. In the matrix under consideration there are $11$ variables. Consider on this basis part the matrix $\varphi_2(x_{e_1-e_3}(1))=w_{e_2-e_3}x_1w_{e_2-e_3}^{-1}$. It is
$$
\begin{pmatrix}
b_1& b_2& 0& 0& -b_3& -b_4& 0& 0\\
c_1& c_2& 0& 0& -c_3& -c_4& 0& 0\\
0& 0& 1-a_2& a_2& 0& 0& a_3& -a_3\\
0& 0& a_2& 1-a_2& 0& 0& -a_3& a_3\\
c_4& c_3& 0& 0& c_2& c_1& 0& 0\\
b_4& b_3& 0& 0& b_2& b_1& 0& 0\\
0& 0& -a_3& a_3& 0& 0& 1-a_2& a_2\\
0& 0& a_3& -a_3& 0& 0& a_2& 1-a_2
\end{pmatrix}
$$

We know that the matrices
$\varphi_2(x_{e_1-e_3}(1))$ and $x_1$ commute, therefore (the position $(1,8)$) it follows $2a_3(b_3-b_4)=0$. Since $b_3-b_4\equiv 1\mod J$, we have $a_3=0$. From the equations listed above we have $2a_2(a_2-1)=0$, therefore (since $a_2\in J$) $a_2=0$. Now use the condition $x_{e_1-e_2}(1)x_{e_2-e_3}(1)=x_{e_1-e_3}(1)x_{e_2-e_3}(1)x_{e_1-e_2}(1)$.
The position $(6,5)$ gives us $b_2(1-b_1-c_2)$, so $b_2=0$. After it the position $(6,6)$ gives $b_1(b_1-1)=0$, therefore а $b_1=1$. From the position $(5,5)$ we have $c_2=1$, and from the position $(5,6)$ we have $c_1=0$. From the same condition  $b_4=-c_3$, $c_4=-b_4^2/b_3$. Therefore the matrix under consideration on the given basis part is
$$
\begin{pmatrix}
1& 0& b_3& -c_3& 0& 0& 0& 0\\
0& 1& c_3& -c_3^2/b_3& 0& 0& 0& 0\\
c_3^2/b_3& -c_3& 1& 0& 0& 0& 0& 0\\
c_3& -b_3& 0& 1& 0& 0& 0& 0\\
0& 0& 0& 0& 1& 0& 0& 0\\
0& 0& 0& 0& 0& 1& 0& 0\\
0& 0& 0& 0& 0& 0& 1& 0\\
0& 0& 0& 0& 0& 0& 0& 1
\end{pmatrix}.
$$

Now take another basis part:
$$\{ v_{e_1-e_2}, v_{e_2-e_1}, v_{e_1+e_2}, v_{-e_1-e_2},  v_{e_3-e_4}, v_{e_4-e_3}, v_{e_3+e_4}, v_{-e_3-e_4}, v_{e_3}, v_{-e_3}, v_{e_4}, v_{-e_4}, V_1, V_2, V_3, V_4\}.
 $$
The matrix $x_1$ commutes with $w_{e_1+e_2}$, $w_{e_4}$, $w_{e_3-e_4}$, therefore we directly have that on the unknown  $12\times 12$ basis part the matrix is
{\footnotesize
\begin{multline*}
\left(\begin{matrix}
b_{1,1}& b_{1,2}& b_{1,3}& -b_{1,3}& b_{1,5}& -b_{1,5}& b_{1,5}& -b_{1,5}\\
b_{2,1}& b_{2,2}& b_{2,3}& -b_{2,3}& b_{2,5}& -b_{2,5}& b_{2,5}& -b_{2,5}\\
b_{3,1}& b_{3,2}& b_{3,3}& b_{3,4}& b_{3,5}& -b_{3,5}& b_{3,5}& -b_{3,5}\\
-b_{3,1}& -b_{3,2}& b_{3,4}& b_{3,3}& -b_{3,5}& b_{3,5}& -b_{3,5}& b_{3,5}\\
b_{5,1}& b_{5,2}& b_{5,3}& -b_{5,3}& b_{5,5}& b_{5,6}& b_{5,7}& -b_{5,7}\\
-b_{5,1}& -b_{5,2}& -b_{5,3}& b_{5,3}& b_{5,6}& b_{5,5}& -b_{5,7}& b_{5,7}\\
b_{5,1}& b_{5,2}& b_{5,3}& -b_{5,3}& b_{5,7}& -b_{5,7}& b_{5,5}& b_{5,6}\\
-b_{5,1}& -b_{5,2}& -b_{5,3}& b_{5,3}& -b_{5,7}& b_{5,7}& b_{5,6}& b_{5,5}\\
b_{13,1}& b_{13,2}& b_{13,3}& b_{15,3}-b_{13,3}& b_{13,5}& -b_{13,5}& b_{13,5}& -b_{13,5}\\
0& 0& b_{15,3}& b_{15,3}& 0& 0& 0& 0\\
0& 0& b_{15,3}& b_{15,3}& b_{15,5}& b_{15,5}& b_{15,5}& b_{15,5}\\
0& 0& b_{15,3}& b_{15,3}& 0& 0& 2b_{15,5}& 2b_{15,5}
\end{matrix}\right.\\
\left.\begin{matrix}
-2b_{1,14}& b_{1,14}& 0& 0\\
-2b_{2,14}& b_{2,14}& 0& 0\\
 b_{4,14}-b_{3,14}& b_{3,14}& 0& 0\\
 b_{3,14}-b_{4,14}& b_{4,14}& 0& 0\\
 2(b_{8,14}-b_{6,16})& 2b_{6,16}-b_{8,14}& -2b_{6,16}& b_{6,16}\\
 2(-b_{8,14}+b_{6,16})& b_{8,14}& -2b_{6,16}& b_{6,16}\\
 2(b_{8,14}-b_{6,16})& 2b_{6,16}-b_{8,14}& 0& -b_{6,16}\\
 2(-b_{8,14}+b_{6,16})& b_{8,14}& 0& -b_{6,16}\\
 b_{14,14}-2b_{13,14}& b_{13,14}& 0& 0\\
  0& b_{14,14}& 0& 0\\
 0& b_{8,14}& b_{14,14}-b_{8,14}& 0\\
 0& b_{8,14}& 0& b_{14,14}-b_{8,14}
\end{matrix}\right)
\end{multline*}
}

Since $x_4$ commutes with $w_{e_1-e_2}$, $w_{e_1+e_2}$, $w_{e_3}$, we have that on the basis part
$$
\{ v_{e_1-e_2}, v_{e_2-e_1}, v_{e_1+e_2}, v_{-e_1-e_2}, v_{e_3-e_4}, v_{e_4-e_3}, v_{e_3+e_4}, v_{-e_3-e_4}, v_{e_3}, v_{-e_3}\}
$$
it is
$$
\begin{pmatrix}
a_{1,1}& a_{1,2}& a_{1,3}& -a_{1,3}& a_{1,5}& a_{1,6}& -a_{1,6}& -a_{1,5}& a_{1,9}& -a_{1,9}\\
a_{1,2}& a_{1,1}& -a_{1,3}& a_{1,3}& -a_{1,5}& -a_{1,6}& a_{1,6}& a_{1,5}& -a_{1,9}& a_{1,9}\\
a_{3,1}& -a_{3,1}& a_{3,3}& a_{3,4}& a_{3,5}& a_{3,6}& -a_{3,6}& -a_{3,5}& a_{3,9}& -a_{3,9}\\
-a_{3,1}& a_{3,1}& a_{3,4}& a_{3,3}& -a_{3,5}& -a_{3,6}& a_{3,6}& a_{3,5}& -a_{3,9}& a_{3,9}\\
a_{5,1}& -a_{5,1}& a_{5,3}& -a_{5,3}& a_{5,5}& a_{5,6}& a_{5,7}& a_{5,8}& a_{5,9}& a_{5,10}\\
a_{6,1}& -a_{6,1}& a_{6,3}& -a_{6,3}& a_{6,5}& a_{6,6}& a_{6,7}& a_{6,8}& a_{6,9}& a_{6,10}\\
-a_{6,1}& a_{6,1}& -a_{6,3}& a_{6,3}& a_{6,8}& a_{6,7}& a_{6,6}& a_{6,5}& a_{6,10}& a_{6,9}\\
-a_{5,1}& a_{5,1}& -a_{5,3}& a_{5,3}& a_{5,8}& a_{5,7}& a_{5,6}& a_{5,5}& a_{5,10}& a_{5,9}\\
a_{9,1}& -a_{9,1}& a_{9,3}& -a_{9,3}& a_{9,5}& a_{9,6}& a_{9,7}& a_{9,8}& a_{9,9}& a_{9,10}\\
-a_{9,1}& a_{9,1}& -a_{9,3}& a_{9,3}& a_{9,8}& a_{9,7}& a_{9,6}& a_{9,5}& a_{9,10}& a_{9,9}
\end{pmatrix},
$$
and on the basis part $\{ v_{e_4}, v_{-e_4}, V_1, V_2, V_3, V_4\}$ (according to $h_{e_3-e_4}(-1)x_4h_{e_3-e_4}(-1)x_4=E$) it is
$$
\begin{pmatrix}
a_{11,11}& a_{11,12}& 0& 0& a_{11,15}& -a_{11,15}\\
a_{12,11}& a_{12,12}& 0& 0& a_{12,15}& -a_{12,15}\\
0& 0& 1& 0& 0& 0\\
0& 0& 0& 1& 0& 0\\
0& 0& 0& 0& 1& 0\\
a_{16,11}& a_{16,12}& 0& 0& 1-a_{16,16}& a_{16,16}
\end{pmatrix}.
$$

Similarly to the previous section with some block-diagonal basis change we can make $a_{16,11}=0$, $a_{16,12}=-1$.

From the position $(15,14)$ of
$$
Con1=(h_{e_3-e_4}(-1)x_4h_{e_3-e_4}(-1)x_4=E)
$$
it follows $a_{16,14}(a_{16,14}+2)=0$, so $a_{16,14}=0$. Similarly, from the position $(13,14)$ we have $a_{13,14}=0$, from the position $(16,12)$ $a_{16,16}=a_{12,12}$, from $(16,11)$  $a_{12,11}=0$, from $(11,11)$ $a_{11,11}=1$; from $(16,16)$  $a_{12,15}=-1+a_{12,12}^2$ from $(11,12)$  $a_{11,15}=a_{11,12}(1+a_{12,12})$.

Since $x_1$ and $x_4$ commute, we have: from the position $(16,12)$  $b_{14,14}=1+b_{8,14}$; from $(10,16)$  $b_{6,16}(a_{9,8}+a_{9,7}-a_{9,6}-a_{9,5}) \Rightarrow b_{6,16}=0$; from $(16,9)$ $b_{15,5}(a_{6,10}+a_{5,10})=0\Rightarrow b_{15,5}=0$.

Now by block-diagonal basis changes, that are not identical on long roots, we can get $b_{13,1}=0$, $b_{13,2}=-1$.

After it we consider the positions $(1,14)$ and $(2,14)$ of the commutation condition, their sum is $(a_{1,1}+a_{1,2}-1)(b_{1,14}+b_{2,14})=0$. Since $b_{1,14}+b_{2,14}$ is invertible, we have $a_{1,2}=1-a_{1,1}$.

Now similarly to the previous section, we will write the conditions modulo radical, sequentially excepting variables.

From the commutation condition: the position $(13,1)$ gives $a_{1,1}=1$; $(13,3)$ gives $a_{1,3}=1$;  $(13,5)$ gives $a_{1,5}=1$; $(13,6)$ gives $a_{1,6}=1$. From  $Con1$: the position $(2,5)$ gives $a_{1,9}=0$; again the commutation condition: the position $(2,9)$ gives $b_{1,5}=0$;  $(3,9)$ gives $b_{3,5}=0$;  $(5,9)$ gives $b_{5,6}=0$; gives $(6,9)$ gives $b_{5,7}=0$; gives $(3,14)$ gives $a_{3,1}=0$; $(5,14)$ gives $a_{5,1}=0$;  $(6,14)$ gives $a_{6,1}=0$; $(9,14)$ gives $a_{9,1}=b_{8,14}$; $(13,10)$ gives $b_{13,5}=0$; $Con1$: the position  $(3,3)$ gives $a_{3,3}=1$;  $(3,5)$ gives $a_{3,5}=a_{3,9}$;  $(3,6)$ gives $a_{3,6}=0$;
 $(10,9)$ gives $a_{9,10}=-a_{9,6}$; the condition $Con2=(h_{e_2-e_3}(-1)x_1h_{e_2-e_3}(-1)x_1=E)$: the position $(13,6)$ gives $b_{2,5}=0$;  $(5,5)$ gives $b_{5,5}=1$;  $(1,2)$ gives $b_{1,2}=b_{1,14}$;  $(15,14)$ gives $b_{8,14}=0$;  $(8,14)$ gives $b_{5,1}=0$;  $(4,14)$ gives $b_{3,1}=0$;  $(2,1)$ gives $b_{2,1}=0$;  $(1,1)$ gives $b_{1,1}=1$;  $(1,2)$ gives $b_{2,2}=1$.

Now introduce a new condition. Note that $[x_{e_3}(1),x_{e_4}(1)]=x_{e_3+e_4}(1)^2$, and the elements $x_{e_3+e_4}(1)^2$ and $x_{e_1-e_2}(1)^2$ are conjugate with the element of the Weil group. Therefore the matrices $x_1^2$ and $[x_4,w_{e_3-e_4}x_4 w_{e_3-e_4}^{-1}]$ are conjugate with the element of the Weil group. In particular, this element changes roots $\{e_1,-e_1,e_2,-e_2\}$ and $\{-e_3,e_3,-e_4,e_4\}$, consequently the matrix
$[x_4,w_{e_3-e_4}x_4 w_{e_3-e_4}^{-1}]$ is block-diagonal up to decomposition on the basis part  $\{-e_3,e_3,-e_4,e_4\}$ and other parts.

Therefore directly  $a_{9,6}=0$, $a_{9,9}=1$, $a_{9,8}=a_{5,9}$, $a_{5,10}=0$, $a_{5,9}=0$; $a_{5,6}=0$.

Again from $Con1$ we have $a_{6,6}=1$, $a_{9,7}=0$, $a_{5,7}=0$, $a_{6,7}=0$, $a_{5,8}=0$, $a_{5,3}=0$, $a_{5,5}=1$, $a_{3,9}=0$, $a_{3,4}=0$.

From the commutation condition $b_{5,2}=0$, $b_{5,3}=0$.
 From $Con2$ $b_{2,14}=0$, $b_{3,4}=0$, $b_{2,3}=0$, $b_{3,3}=1$, $b_{13,14}=0$, $b_{3,14}=2b_{3,2}+b_{4,14}$, $b_{1,3}=-2b_{13,3}+2b_{15,3}$.

Again from the block-diagonal matrix $[x_4,w_{e_3-e_4}x_4 w_{e_3-e_4}^{-1}]$ we get $a_{6,5}=0$, $a_{6,9}=0$, $a_{12,12}=1$, $a_{6,3}=0$, $a_{9,3}=0$, $a_{6,8}=a_{11,12}$, $a_{11,12}=a_{6,10}a_{9,5}/2$, $b_{3,2}=0$, $b_{13,3}=0$, $b_{15,3}=0$, $b_{4,14}=0$.

From other equations $a_{6,10}=-2/a_{9,5}$, $a_{9,5}=-2$, $a_{1,14}=2$.

Therefore, $\varphi_2(x_{\alpha_4}(1))=x_{\alpha_4}(1)$ and $\varphi_2(x_{\alpha_1}(1))=x_{\alpha_1}(1)$ on the basis parts under consideration. But for $x_{\alpha_4}(1)$ other basis parts (not considered yet)    
$$
\{ v_{e_1}, v_{-e_1}, v_{e_1-e_4}, v_{e_4-e_1}, v_{e_1+e_4}, v_{-e_1-e_4}, v_{e_2-e_3}, v_{e_3-e_2}, v_{e_2+e_3}, v_{-e_2-e_3}\}
$$ 
and 
$$
\{ v_{e_2}, v_{-e_2} ,v_{e_2-e_4}, v_{e_4-e_2}, v_{e_2+e_4}, v_{-e_2-e_4}, v_{e_1-e_3}, v_{e_3-e_1}, v_{e_1+e_3}, v_{-e_1-e_3}\}
$$ 
are conjugate to the basis part 
$$
\{ v_{e_1-e_2}, v_{e_2-e_1}, v_{e_1+e_2}, v_{-e_1-e_2}, v_{e_3}, v_{-e_3}, v_{e_3-e_4}, v_{e_4-e_3}, v_{e_3+e_4}, v_{-e_3-e_4}\}
$$ 
with elements of Weil group $w_{\alpha_2}$ and $w_{\alpha_1}w_{\alpha_2}$. Thus $x_4$ coincides with $x_{\alpha_4}(1)$ on the whole  $36$-dimensional space. Clear that it follows directly $\varphi_2(x_\alpha(1))=x_\alpha(1)$ for all short roots~$\alpha\in \Phi$. Besides, $\varphi_2(x_{\alpha}(2))= x_{\alpha}(2)$ for all long roots~$\alpha$.

Consequently, if we show, that under~$\varphi_2$ the diagonal matrix $h_{e_2-e_3}(2)$ is mapped to itself, the obtained statement will be proved. Consider $d_t=\varphi_2(h_{\alpha_1}(t))$.

Since $d_t$ commutes with $h_1,h_2,h_3$, that it is decomposed into diagonal blocks, corresponding to the basis parts $\{ v_{e_i}, v_{-e_i}\}$, $i=1,\dots,4$, $\{ V_1, V_2, V_3, V_4\}$  and $\{ v_{e_i-e_j}, v_{e_j-e_i}, v_{e_i+e_j}, v_{-e_i-e_j}$,  $v_{e_k-e_l}, v_{e_l-e_k}, v_{e_k+e_l}, v_{-e_k-e_l}\}$, where $i,j,k,l$ are distinct numbers from $1$ to~$4$. Since $d_t$ commutes also with $w_{e_4}$, $w_{e_3-e_4}$, $w_{e_1+e_2}$, then it has the same form on the basis parts $\{ v_{e_1}, v_{-e_1}\}$ and $\{ v_{-e_2}, v_{e_2}\}$, $\{ v_{e_3}, v_{-e_3}\}$ и $\{ v_{e_4}, v_{-e_4}\}$,\  $\{ v_{e_1-e_3}, v_{e_3-e_1}, v_{e_1+e_3}, v_{-e_1-e_3},  v_{e_2-e_4}, v_{e_4-e_2}$, $v_{e_2+e_4}, v_{-e_2-e_4}\}$ and $\{ v_{e_1-e_4}, v_{e_4-e_1}, v_{e_1+e_4}, v_{-e_1-e_4},  v_{e_2-e_3}, v_{e_3-e_2}, v_{e_2+e_3}, v_{-e_2-e_3}\}$. Also if on the part $\{ v_{e_4}, v_{-e_4}\}$ the matrix $d_t$ has the form
$$
\begin{pmatrix}
d_1& d_2\\
d_3& d_4
\end{pmatrix},
$$
then since it commutes with $w_{e_4}$, we directly have $d_4=d_1$, $d_3=d_2$. From the condition $w_{\alpha_1}d_tw_{\alpha_1}^{-1}d_t=E$ we get $2d_1d_2=0$ and $d_1^2+d_2^2=1$, therefore $d_1=1$, $d_2=0$, on the given basis part we get the obtained form.

Besides we note that $d_t$ commutes with $x_{e_1+e_2}(2)$, $x_{e_4}(1)$, $x_{e_3-e_4}(2)$, or, what is the same, with $X_{e_1+e_2}$, $X_{e_4}$, $X_{e_3-e_4}$.

Considering the basis part $\{ v_{e_1}, v_{-e_1}, v_{e_2}, v_{-e_2}\}$, we see that from commutation with $w_{e_1+e_2}$ on it the matrix $d_t$ is
$$
\begin{pmatrix}
d_{1,1}& d_{1,2}& 0& 0\\
d_{2,1}& d_{2,2}& 0& 0\\
0& 0& d_{2,2}& d_{2,1}\\
0& 0& d_{1,2}& d_{1,1}
\end{pmatrix};
$$
commutation with $X_{e_1+e_2}$, which is $e_{v_{e_1},v_{-e_2}}-e_{v_{e_2},v_{-e_1}}$ on this basis part, gives $d_{1,2}=d_{2,1}=0$,
and, finally, the condition $w_{\alpha_1}d_tw_{\alpha_1}^{-1}d_t=E$ gives $d_{1,1}d_{2,2}=1$.

Now come to the basis part $\{ V_1, V_2, V_3, V_4\}$. From commuting with $w_{e_4}$, $w_{e_3-e_4}$, $w_{e_1+e_2}$ and from the condition $w_{\alpha_1}d_tw_{\alpha_1}^{-1}d_t=E$ we easily get that on this part $d_t$ is identical.

Now we only need to consider the $8\times 8$ basis parts. At first, consider the part 
$$
\{ v_{e_1-e_2}, v_{e_2-e_1}, v_{e_1+e_2}, v_{-e_1-e_2},  v_{e_3-e_4}, v_{e_4-e_3}, v_{e_3+e_4}, v_{-e_3-e_4}\}.
 $$
We temporarily unite it with the previous basis part and get a $12\times 12$ matrix with an identical $4\times 4$ last block.

Since $d_t$ on this basis part commutes with $X_{e_1+e_2}=2e_{v_{e_1+e_2},v_{h_2}}-e_{v_{h_2},v_{-e_1-e_2}}$ и c $X_{-e_1-e_2}=2e_{v_{-e_1-e_2},v_{h_2}}-e_{v_{h_2},v_{e_1+e_2}}$, we directly have that the matrix is decomposed into diagonal blocks up to the basis parts $\{ v_{e_1-e_2}, v_{e_2-e_1},   v_{e_3-e_4}, v_{e_4-e_3}, v_{e_3+e_4}, v_{-e_3-e_4}\}$ и $v_{e_1+e_2}, v_{-e_1-e_2}$, and on the last part it is identical.

Similarly, since $d_t$ commutes with $X_{e_3-e_4}=2e_{v_{e_3-e_4},v_{h_3}}-e_{v_{e_3-e_4},v_{h_2}}-e_{v_{e_3-e_4},v_{h_4}}-e_{v_{h_3},v_{e_4-e_3}}$ and with $X_{e_4-e_3}=2e_{v_{e_4-e_3},v_{h_3}}-e_{v_{e_4-e_3},v_{h_2}}-e_{v_{e_4-e_3},v_{h_4}}-e_{v_{h_3},v_{e_3-e_4}}$, we have that the matrix is decomposed also into diagonal blocks  up to the basis parts $\{ v_{e_1-e_2}, v_{e_2-e_1}$,   $ v_{e_3+e_4}, v_{-e_3-e_4}\}$ and $v_{e_3-e_4}, v_{e_4-e_3}$, on the last part it is identical.

Now  use the fact that our matrix commutes with $w_{e_4}$, which changes pairs of roots $e_3-e_4$ and $e_3+e_4$, $e_4-e_3$ and $-e_3-e_4$. We get that  $d_t$ is identical everywhere except the block $\{ v_{e_1-e_2}, v_{e_2-e_1}\}$. This block we do not study yet, and come to the last not considered yet basis parts
$$
\{ v_{e_1-e_3}, v_{e_3-e_1}, v_{e_1+e_3}, v_{-e_1-e_3},  v_{e_2-e_4}, v_{e_4-e_2}, v_{e_2+e_4}, v_{-e_2-e_4}\}
$$ 
and 
$$
\{ v_{e_1-e_4}, v_{e_4-e_1}, v_{e_1+e_4}, v_{-e_1-e_4},  v_{e_2-e_3}, v_{e_3-e_2}, v_{e_2+e_3}, v_{-e_2-e_3}\}.
$$

At first $d_t$ commutes with $X_{e_1+e_2}$, $X_{-e_1-e_2}$, $X_{e_3+e_4}$, $X_{-e_3-e_4}$, $X_{e_3-e_4}$, $X_{e_4-e_3}$. Directly from it we have that $d_t$ is diagonal and has the form
$$
\diag [d_1,d_2,d_1,d_2,d_2,d_1, d_2,d_1,d_1,d_2, d_1,d_2,d_2,d_1,d_2,d_1].
$$
Since $w_{e_1-e_2}d_tw_{e_1-e_2}^{-1}d_t=E$, we have $d_1d_2=1$.

Now temporarily consider only the basis part $\{ v_{e_1-e_2}, v_{e_2-e_1}, v_{e_2-e_3}, v_{e_3-e_2}, v_{e_1-e_3}, v_{e_3-e_1}\}$. We have the condition $w_{e_2-e_3}d_tw_{e_2-e_3}^{-1}=d_tw_{e_1-e_2}w_{e_2-e_3}d_tw_{e_2-e_3}^{-1}w_{e_1-e_2}^{-1}$. It implies that on the block $\{ v_{e_1-e_2}, v_{e_2-e_1}\}$ the matrix $d_t$ is diagonal with numbers $d_1^2, 1/d_1^2$ on the diagonal.

Then use the commutation with $X_{e_4}$ and get $d_{1,1}=d_1$. Therefore $d_t=h_{\alpha_1}(d_1)$.

Note that if $t=2$, then we also have
$$
d_2 x_{e_2}(1) d_2^{-1}=x_{e_2}(1)^2,
$$
so $d_2=h_{\alpha_1}(2)$. It shows that $x_1=x_{\alpha_1}(1)$.

The obtained statement is proved.

\subsection{The case $B_l$, $l\geqslant 5$.}

In this case we consider $x=\varphi_2(x_{e_1-e_2}(1))$ and $y=\varphi_2(x_{e_3}(1))$.

The matrix $x$ commutes with $h_{\alpha_1}(-1)$, $h_{\alpha_3}(-1), h_{\alpha_4}(-1),\dots, h_{\alpha_{l-1}}(-1)$, therefore it is decomposed into diagonal blocks corresponding to the following basis parts:

1. $\{ v_{e_1-e_2}, v_{e_2-e_1}, v_{e_1+e_2}, v_{-e_1-e_2}, V_{h_1},\dots, V_{h_l}\}$;

2. $\{ v_{e_i-e_j}, v_{e_j-e_i}, v_{e_i+e_j}, v_{-e_i-e_j}\}$, $2< i< j\leqslant l$;

3. $\{ v_{e_1-e_i}, v_{e_i-e_1}, v_{e_1+e_i}, v_{-e_1-e_i}, v_{e_2-e_i}, v_{e_i-e_2}, v_{e_2+e_i}, v_{-e_2-e_i}\}$, $2< i\leqslant l$;

4. $\{ v_{e_1}, v_{-e_1}, v_{e_2}, v_{-e_2}\}$;

5. $\{ v_{e_i}, v_{-e_i}\}$, $2< i\leqslant l$.

If on the basis part of the last type the matrix $x$ has the form
$$
\begin{pmatrix}
a& b\\
c& d
\end{pmatrix},
$$
then from the condition $h_{\alpha_2}(-1)xh_{\alpha_2}(-1)x=E$ we have $c(a+d)=b(a+d)=0$, consequently (since $a+d\equiv 2\mod J$) we get $b=c=0$.
Then $a^2=d^2=1$, therefore $a=d=1$. Thus, on all basis parts of the fifth type $x$ is identical.

Now consider the basis part of the fourth type.

According to commutation with $w_{e_1+e_2}$ we directly have that on this basis part $x$ is
$$
\begin{pmatrix}
x_1& x_2& x_3& x_4\\
x_5& x_6& x_7& x_8\\
-x_8& -x_7& x_6& x_5\\
-x_4& -x_3& x_2& x_1
\end{pmatrix}.
$$

Let us unite this basis part with $\{ v_{e_3}, v_{-e_3}\}$, where $x$ is identical, as we already know.
On this basis part we consider the condition
$$
xw_{e_1-e_2}w_{e_2-e_3}xw_{e_2-e_3}^{-1}w_{e_1-e_2}^{-1}=w_{e_2-e_3}xw_{e_2-e_3}^{-1}w_{e_1-e_2}w_{e_2-e_3}xw_{e_2-e_3}^{-1}w_{e_1-e_2}^{-1}x.
$$
From its position $(6,5)$ it follows $x_2(1-x_1-x_6)=0$. Since $1-x_1-x_6\equiv -1\mod J$, then $x_2=0$. Similarly from the position $(5,6)$ we have $x_5=0$. After that from the positions $(5,5)$ and $(6,6)$ $x_1=x_6=1$. From $(1,4)$ $x_3(x_4+x_7)=0$, and since $x_3\equiv -1 \mod J$, then $x_7=-x_4$. From $(6,2)$ and $(5,1)$ we get $x_4^2=x_3+x_3^2$ and $x_4^2=x_8+x_8^2$, therefore $(x_3-x_8)(1+x_3+x_8)=0$.
Since $x_3-x_8\equiv -1\mod J$, then $x_8=-1-x_3$.

Therefore now we have that on the basis part of the fourth type $x$ is
$$
\begin{pmatrix}
1& 0& x_3& x_4\\
0& 1& -x_4& -1-x_3\\
1+x_3& x_4& 1& 0\\
-x_4& -x_3& 0& 1
\end{pmatrix}.
$$

Since elements of the Weil group which do not move~$\alpha_1$, act transitively on the roots of the same length, orthogonal to~$\alpha_1$, we have that on all basis parts of the second type and on all basis parts of the third type $x$ has the same form.

Consider any basis part of the second type (for example, $\{ v_{e_3-e_4}, v_{e_4-e_3}$,\\ $v_{e_3+e_4}, v_{-e_3-e_4}\}$). Since $x$ commutes with $w_{e_3}$, $w_{e_4}$, $w_{e_3-e_4}$ (in the general case with $w_{e_i}$, $w_{e_j}$, $w_{e_i-e_j}$), it follows that $x$ has the form
$$
\begin{pmatrix}
x_5& x_6& x_7& -x_7\\
x_6& x_5& -x_7& x_7\\
x_7& -x_7& x_5& x_6\\
-x_7& x_7& x_6& x_5
\end{pmatrix}.
$$

Using the condition $h_{e_2-e_3}(-1)x h_{e_2-e_3}(-1)x=E$ we get (on the place $(1,3)$) $2x_7(x_5-x_6)=0$, therefore, since $x_5-x_6\equiv 1\mod J$, we have $x_7=0$. After that from the position $(1,2)$ it follows $2x_5x_6=0$, so $x_6=0$, and then directly $x_5=0$. Consequently we also know that on the basis parts of the second type $x$ is identical.

Let us come to the basis part of the third type (similarly, we can take $i=3$).

According to commutation with $w_{e_1+e_2}$ and $w_{e_i}$ we have that $x$ on this basis part has the form
$$
\begin{pmatrix}
x_5& x_6& x_7& x_8& x_9& x_{10}& x_{11}& x_{12}\\
x_{13}& x_{14}& x_{15}& x_{16}& x_{17}& x_{18}& x_{19}& x_{20}\\
x_7& x_8& x_5& x_6& x_{11}& x_{12}& x_9& x_{10}\\
 x_{15}& x_{16}& x_{13}& x_{14}&  x_{19}& x_{20}&  x_{17}& x_{18}\\
-x_{18}& -x_{17}& -x_{20}& -x_{19}& x_{14}& x_{13}& x_{16}& x_{15}\\
-x_{10}& -x_9& -x_{12}& -x_{11}& x_6& x_5& x_8& x_7\\
-x_{20}& -x_{19}& -x_{18}& -x_{17}& x_{16}& x_{15}& x_{14}& x_{13}\\
-x_{12}& -x_{11}& -x_{10}& -x_9& x_8& x_7& x_6& x_5
\end{pmatrix}.
$$

Now consider the last basis part  $\{ v_{e_1-e_2}, v_{e_2-e_1}, v_{e_1+e_2}, v_{-e_1-e_2}, V_{h_1},\dots, V_{h_l}\}$. According  to commutation with $w_{\alpha_3}$, \dots, $w_{\alpha_l}$ the matrix $x$ is identical on the basis part $\{V_{h_3},\dots, V_{h_l}\}$. Therefore, we can consider in the given case the $6\times 6$ matrix.

Commuting with $w_{e_1+e_2}$ gives us the following form:
$$
\begin{pmatrix}
x_{21}& x_{22}& x_{23}& -x_{23}& -2x_{24}& x_{24}\\
x_{25}& x_{26}& x_{27}& -x_{27}&  -2x_{28}& x_{28}\\
x_{29}& x_{30}& x_{31}& x_{32}& x_{33}& x_{34}\\
-x_{29}& -x_{30}& x_{32}&  x_{31}& -x_{33}& x_{33}+x_{34}\\
x_{35}& x_{36}& x_{37}& x_{38}& x_{39}& x_{40}\\
0& 0& x_{37}+x_{38}& x_{37}+x_{38}& 0& 2x_{40}+x_{39}
\end{pmatrix}.
$$

Therefore for the matrix $x$ we now have $40$ variables $x_1,\dots, x_{40}$.

Now consider the matrix $y$. It commutes with $h_{\alpha_1}(-1)$, $h_{\alpha_2}(-1)h_{\alpha_3}(-1)$, $h_{\alpha_4}(-1)$, $h_{\alpha_5}(-1)$,  \dots, $h_{\alpha_{l-1}}(-1)$ and is decomposed to the following blocks:

1. $\{ v_{e_3}, v_{-e_3}, V_{h_1},\dots, V_{h_l}\}$;

2. $\{v_{e_3-e_i}, v_{e_i-e_3}, v_{e_3+e_i}, v_{-e_3-e_i},  v_{e_i}, v_{-e_i}\}$, $i\ne 3$;

3. $\{ v_{e_i-e_j}, v_{e_j-e_i}, v_{e_i+e_j}, v_{-e_i-e_j}\}$, $1\leqslant i< j\leqslant l$; $i,j\ne 3$.

On the basis parts of the third type $y$ is identical (it is proved just by the same way as for the matrix~$x$; we use here commutation with $w_{e_i}$, $w_{e_j}$, $w_{e_i-e_j}$ and the condition $h_{e_2-e_3}(-1) y h_{e_2-e_3}(-1)y=E$).

Clear that on all basis parts of the second type the matrix  $y$ have the same form (since the elements of the Weil, which do not move~$e_3$, act transitively on the roots $e_i$, $i\ne 3$).

According to commutation with $w_{e_3}$ the matrix $y$ on the given basis part has the form
$$
\begin{pmatrix}
y_1& y_2& y_3& y_4& y_5& y_6\\
y_7& y_8& y_9& y_{10}& y_{11}& y_{12}\\
y_3& y_4& y_1& y_2& -y_6& -y_5\\
y_9& y_{10}& y_7& y_8& -y_{12}& -y_{11}\\
y_{13}& y_{14}& y_{15}& y_{16}& y_{17}& y_{18}\\
-y_{15}& -y_{16}& -y_{13}& -y_{14}& y_{18}& y_{17}
\end{pmatrix}.
$$

On the basis part of the first type, according to commutation with all $w_{\alpha_j}$, which do not move~$e_3$, on the basis elements $V_{h_1}, V_{h_5}, \dots, V_{h_l}$ the matrix $y$ is identical. On the basis part $\{ v_{e_3}, v_{-e_3}, V_{h_2},V_{h_3}, V_{h_4}\}$ we use additionally  $h_{e_2-e_3}(-1) y h_{e_2-e_3}(-1)y=E$, $yw_{e_4}yw_{e_4}^{-1}y=w_{e_4}$, and also, as in the previous sections, make basis changes so that on the places $(V_{h_3},v_{e_3})$ and $(V_{h_4},v_{e_3})$ there are zeros, and on the places $(V_{h_3},v_{-e_3})$, $(V_{h_4},v_{-e_3})$ there are $-1$. Then we have the following $y$:
$$
\begin{pmatrix}
1& y_{19}& y_{19}-y_{19}^2& y_{19}^2-y_{19}& 0\\
0& -y_{19}& y_{19}^2-1& 1-y_{19}^2& 0\\
0& 0& 1& 0& 0\\
0& -1& 1+y_{19}& -y_{19} & 0\\
0& -1& 1+y_{19}& -1-y_{19}& 1
\end{pmatrix}.
$$

Consequently for the matrix $y$ we have $19$ variables $y_1,\dots, y_{19}$.

With one more basis change from the previous section let us make $x_{35}=0$, $x_{36}=-1$.

Now use the fact that $x$ and $y$ commute. At first, consider the basis part 
$$
\{ v_{e_1-e_2}, v_{e_2-e_1}, v_{e_1+e_2}, v_{-e_1-e_2}, v_{e_3}, v_{-e_3}, V_{h_1}, V_{h_2}, V_{h_3}, V_{h_4}\}.
 $$
The matrix $y$ on it has only one unknown variable $y_{19}\equiv -1\mod J$. According to the given condition  $x_{37}+x_{38}=0$ and $x_{39}+2x_{40}=1$.

Consider now the basis part  $v_{e_1-e_2}, v_{e_2-e_1}, v_{e_1+e_2}, v_{-e_1-e_2}, v_{e_1}, v_{-e_1}, v_{e_2}, v_{-e_2}, V_{h_1}, V_{h_2}$. Since the matrix $x_{e_1}(1)$ is conjugate to $x_{e_3}(1)$  with the help of the Weil group element, we easily can take $\varphi_2(x_{e_1}(1))=y'$. The matrix  $y'$ has the same variables as~$y$. The matrices $x$ and $y'$ also commute. Consider this condition on the basis part under consideration.

The matrix $x$ is
$$
\begin{pmatrix}
x_{21}& x_{22}& x_{23}& -x_{23}& 0& 0& 0& 0& -2x_{24}& x_{24}\\
x_{25}& x_{26}& x_{27}& -x_{27}& 0& 0& 0& 0& -2x_{28}& x_{28}\\
x_{29} x_{30}& x_{31}& x_{32}& 0& 0& 0& 0& x_{33}& x_{34}\\
-x_{29}& -x_{30}& x_{32}& x_{31}& 0& 0& 0& 0& -x_{33}& x_{33}+x_{34}\\
0& 0& 0& 0& 1& 0& x_3& x_4& 0& 0\\
0& 0& 0& 0& 0& 1& -x_4& 1-x_3& 0& 0\\
0& 0& 0& 0& 1+x_3& x_4& 1& 0& 0& 0\\
0& 0& 0& 0& -x_4& -x_3& 0& 1& 0& 0\\
0& -1& x_{37} -x_{37}& 0& 0& 0& 0& 1-2x_{40}& x_{40}\\
0& 0& 0& 0& 0& 0& 0& 0& 0& 1
\end{pmatrix},
$$
and $y'$ is
$$
\begin{pmatrix}
y_1& y_2& y_3& y_4& 0& 0& y_5& y_6& 0& 0\\
y_7& y_8& y_9& y_{10}& 0& 0& y_{11}& y_{12}& 0& 0\\
y_3& y_4& y_1& y_2& 0& 0& -y_6& -y_5& 0& 0\\
y_9& y_{10}& y_7& y_8& 0& 0& -y_{12}& -y_{11}& 0& 0\\
0& 0& 0& 0& 1& y_{19}& 0& 0& y_{19}^2-y_{19}& 0\\
0& 0& 0& 0& 0& -y_{19}& 0& 0& y_{19}^2-1& 0\\
y_{13}& y_{14}& y_{15}& y_{16}& 0& 0& y_{17}& y_{18}& 0& 0\\
-y_{15}& -y_{16}& -y_{13}& -y_{14}& 0& 0& y_{18}& y_{17}& 0& 0\\
0& 0& 0& 0& 0& -1& 0& 1+y_{19}& -y_{19}& 0\\
0& 0& 0& 0& 0& 0& 0& 0& 0& 1
\end{pmatrix}.
$$

The position $(5,10)$ of commutation gives $x_{40}(y_{19}-y_{19}^2)=0$, therefore $x_{40}=0$. The position $(9,6)$ gives $x_3(1+y_{19})=0$, so $y_{19}=-1$.
If we consider the condition $h_{e_2-e_3}(-1)xh_{e_2-e_3}(-1)x=E$, we directly get $x_4=0$, $x_3=1$.
Now from the commutation condition (the fifth line) $y_{13}=0$, $y_{14}=-2$, $y_{15}=2x_{37}$, $y_{16}=-2x_{37}$, $y_{17}=1$, $y_{18}=0$, (the sixth row) $y_6=-2x_{24}$, $y_{12}=-2x_{28}$, $y_5=-x_{33}$, $y_{11}=x_{33}$. From the seventh line of the condition $h_{e_2-e_3}(-1)xh_{e_2-e_3}(-1)x=E$ we have: $x_{25}=2x_{37}x_{29}$, $x_{26}=1+2x_{37}x_{30}$, $x_{27}=x_{37}(-1+x_{31}-x_{32})$, $x_{28}=-x_{33}x_{37}$, $x_{29}(-x_{21}-2x_{30}x_{37}+x_{32}-x_{31})=0$, so $x_{29}=0$, then $x_{21}=1$.

From the position $(2,10)$ of
$$
Comm: (y' w_{e_1-e_2} y' w_{e_1-e_2}^{-1}= w_{e_2} x^2 w_{e_2}^{-1} w_{e_1-e_2} y' w_{e_1-e_2}^{-1} y')
$$
we have $x_{34}=x_{33}$, and from the position $(2,9)$ $x_{33}(-1-4x_{37}x_{30}-2x_{31}+2x_{32})=0$, therefore $x_{33}=0$.

Again from the commutation condition $y_9x_{24}=x_{32}x_{24}=x_{37}x_{24}=0$, so $y_9=x_{32}=x_{37}=0$, then $x_{31}=1$, $y_7=y_{10}=y_3=x_{23}=x_{30}=0$, $y_1=y_8=1$. From $h_{e_2-e_3}(-1)xh_{e_2-e_3}(-1)x=E$ we have $x_{24}=x_{22}$, and from $h_{e_1-e_2}(-1)y'h_{e_1-e_2}(-1)y'=E$ we have $y_2=0$, $y_4=-2x_{22}$. Finally, from $Comm$ we see that $x_{22}=-1$, and we have now variables.

Now the matrix  $y'$ is equal to $x_{e_1}(1)$ on the basis part under consideration, consequently, on the whole basis. Respectively, the matrix $y$ is $x_{e_3}(1)$.

Now we need only to find~$x$ on the basis parts of the third part. It is easily obtained from the commutation condition and the form of~$x^2$, which is already known.

Similarly to the previous sections we can see that $\varphi_2(h_{\alpha}(t))=h_{\alpha}(s)$ for every $t\in R^*$ and $\alpha\in \Phi$.

\section{Images of $x_{\alpha_i}(t)$,
proof of Theorem 2.}

We have shown that $\varphi_2(h_{\alpha}(t))=h_{\alpha}(s)$,
$\alpha\in \Phi$, $t\in R^*$. Denote the mapping $t\mapsto s$ by $\rho: R^*
\to R^*$. Note that for $t\in R^*$
$\varphi_2(x_{e_1}(t))=\varphi_2(h_{e_1-e_2}(t^{-1}) x_{e_1}(1)
h_{e_1-e_2}(t))=h_{e_1-e_2}(s^{-1}) x_{e_1}(1) h_{e_1-e_2}(s)=
x_{e_1}(s)$. If $t\notin R^*$, then $t\in J$, i.\,e., $t=1+t_1$, where
$t_1\in R^*$. Then
$\varphi_2(x_{e_1}(t))=\varphi_2(x_{e_1}(1)x_{e_1}(t_1))=x_{e_1}(1)x_{e_1}(\rho(t_1))=
x_{e_1}(1+\rho(t_1))$. Therefore if we extend the mapping
$\rho$ to the whole~$R$ (by the formula $\rho(t):=1+\rho(t-1)$, $t\in
R$), we obtain $\varphi_2(x_{e_1}(t))=x_{e_1}(\rho(t))$ for all $t\in R$. Then (since all short roots are conjugate under the action of the Weil group) $\varphi_2(x_{\alpha}(t))=x_{\alpha}(\rho(t))$ for all short roots~$\alpha$. According to the commutation condition
$$
x_{e_i\pm e_j}(2t)=[x_{e_i}(t), x_{\pm e_j}(1)]
$$
we get  $\varphi_2(x_{\alpha}(t))=x_{\alpha}(\rho(t))$ for all long roots~$\alpha$.

Clear that $\rho$ is injective, additive, and also  multiplicative on all invertible elements. Since every element of~$R$ is a sum of two invertible elements, we have that  $\rho$ is an isomorphism from the ring~$R$ onto some its subring~$R'$. Note that in this situation $C E(\Phi,R) C^{-1}=E(\Phi,R')$ for some matrix $C\in
\GL(V)$. Let us show that $R'=R$.

Denote matrix units by $E_{ij}$.

\begin{lemma}\label{porozhd}
The elementary Chevalley group $E_{\ad}(\Phi,R)$ generates the matrix ring $M_n(R)$.
\end{lemma}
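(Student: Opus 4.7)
The plan is to show that the $R$-subalgebra $A\subseteq M_n(R)$ generated by $E_{\ad}(\Phi,R)$ coincides with $M_n(R)$. First I would extract matrix infinitesimal generators. For the root system $B_l$, every $\alpha$-string through a weight of the adjoint representation has length at most three, so $(\ad x_\alpha)^3=0$ and the group element $x_\alpha(t)$ is the polynomial
\[
x_\alpha(t)=1+tX_\alpha+t^2X_\alpha^{(2)},\qquad X_\alpha:=\pi(x_\alpha),\quad X_\alpha^{(2)}:=X_\alpha^2/2\in M_n(\mathbb Z).
\]
Since $1/2\in R$, the combinations $X_\alpha=\tfrac12(x_\alpha(1)-x_\alpha(-1))$ and $X_\alpha^{(2)}=\tfrac12(x_\alpha(1)+x_\alpha(-1)-2\cdot 1)$ lie in $A$, so $A$ contains every $X_\alpha$, $\alpha\in\Phi$, together with their divided squares.

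Next I would reduce modulo the maximal ideal $J$. The image $\overline A\subseteq M_n(k)$, $k=R/J$, contains all $\overline X_\alpha$ and, a fortiori, every Lie bracket $[\overline X_\alpha,\overline X_{-\alpha}]=\overline H_\alpha$ as well as all associative products of the $\overline X_\alpha$. Because $k$ has characteristic different from $2$ and $l\ge 2$, the Chevalley Lie algebra of type $B_l$ over $k$ is simple, and its adjoint representation on itself is a faithful absolutely irreducible $k$-representation of dimension $n$. Burnside's density theorem then yields that the associative $k$-subalgebra of $M_n(k)$ generated by $\{\overline X_\alpha:\alpha\in\Phi\}$ equals $M_n(k)$, hence $\overline A=M_n(k)$.

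The equality $\overline A=M_n(k)$ translates to $A+J\cdot M_n(R)=M_n(R)$, so the finitely generated $R$-module $M_n(R)/A$ satisfies $J\cdot(M_n(R)/A)=M_n(R)/A$. Nakayama's lemma, applied over the local ring $R$, then forces $M_n(R)/A=0$, i.e.\ $A=M_n(R)$.

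The main obstacle is the assertion that the adjoint representation of the Chevalley Lie algebra of type $B_l$ is absolutely irreducible in every residue characteristic different from $2$; this is classical for $l\ge 2$, since the Chevalley $\mathbb Z$-form of $\mathfrak{so}_{2l+1}$ remains simple after reduction modulo every odd prime, and one may appeal to this fact rather than redo the verification. Everything else in the argument (nilpotency of $\ad x_\alpha$, extraction of $X_\alpha$ using $1/2$, Nakayama) is routine.
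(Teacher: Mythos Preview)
Your argument is correct and substantially different from the paper's. The paper proceeds by brute-force construction of matrix units: it starts from $(x_{\alpha_1}(1)-1)^2=-2E_{1,2}$, then uses Weyl-group conjugation and products of already-obtained units to manufacture every $E_{i,j}$ block by block, treating separately the indices corresponding to long roots, short roots, and the Cartan part of the weight basis; arbitrary scalars $\lambda\in R$ are obtained by multiplying with torus elements and using that every element of the local ring~$R$ is a sum of two units. Your route---extract $X_\alpha$ from $x_\alpha(\pm 1)$ using $1/2\in R$, reduce modulo~$J$, invoke simplicity of the Chevalley Lie algebra of type~$B_l$ in odd characteristic and Burnside's density theorem to get $\overline A=M_n(k)$, then lift by Nakayama---is far more conceptual and generalises at once to any type whose reduced Lie algebra is simple over the residue field.

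One point to watch: you prove that the $R$-\emph{subalgebra} generated by $E_{\ad}(\Phi,R)$ is $M_n(R)$, whereas the very next lemma in the paper uses that the \emph{subring} generated is $M_n(R)$ (one compares the rings generated by $E(\Phi,R)$ and by $E(\Phi,R')$ for a subring $R'\subseteq R$, and the $R$-algebra statement does not directly feed that comparison). This is easy to repair within your framework: from $x_\alpha(t)-x_\alpha(-t)=2tX_\alpha$ and $2\in R^*$ one gets $R\cdot X_\alpha$ inside the bare subring, hence the $R$-span of all nonempty products of the $X_\alpha$ lies in the subring; this $R$-span is an $R$-submodule whose reduction is $M_n(k)$ by your Burnside step, and Nakayama finishes as before. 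With that adjustment your proof matches the paper's conclusion while avoiding all of its explicit matrix-unit bookkeeping.
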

\begin{proof}
The martix $(x_{\alpha_1}(1)-1)^2$ has a unique nonzero element
$-2\cdot E_{12}$. Multiplying it to suitable diagonal matrices, we can obtain
an arbitrary matrix of the form $\lambda\cdot E_{12}$ (since $-2\in R^*$ and $R^*$ generates~$R$). Since the Weil group acts transitively on all roots of the same length, i.\.e., for every long root $\alpha_k$ there exists such  $w\in W$, that
$w(\alpha_1)=\alpha_k$, and then the matrix $\lambda E_{12}\cdot w$ has the form
$\lambda E_{1,2k}$,  and the matrix $w^{-1}\cdot \lambda E_{12}$ has the form
$\lambda E_{2k-1,2}$. Besides, with the help of the Weil group element, moving the first root to the opposite one, we can get the matrix unit
$E_{2,1}$. Taking now different combinations of the obtained elements, we can get an arbitrary element $\lambda E_{ij}$, $1\leqslant i,j\leqslant 2m$, indices $i,j$ correspond to the numbers of long roots.

The matrix $(x_{e_i}(1)-1)^2$  is
$$
-2E_{v_{e_i},v_{-e_i}}+\sum_{j\ne i} (2E_{v_{e_i-e_j},v_{-e_i-e_j}}+2E_{v_{e_i+e_j},v_{e_j-e_i}}).
$$
All matrix units in this sum, except the first one, are already obtained, therefore we can subtract them and get $E_{v_{e_i},v_{-e_i}}$.
Similarly to the longs roots, using the fact that all short roots are also conjugate under the action of the Weil groups, we obtain all $\lambda E_{ij}$, $1\leqslant i,j\leqslant 2m$, indices $i,j$ correspond to the short roots.

Now subtract from the matrix $x_{\alpha_1}(1)-1$ suitable matrix units and obtain the matrix $E_{V_{h_1}, v_{-\alpha_1}}-2 E_{v_{\alpha_1},V_{h_1}}+ E_{v_{\alpha_1},V_{h_2}}$. Multiplying it (from the right side) to $E_{v_{-\alpha_1},i}$, $1\leqslant i\leqslant 2m$, where $i$ corresponds to a long root, we obtain all
$E_{V_{h_1}, i}$, $1\leqslant i\leqslant 2m$ for $i$ corresponding to the long roots. Multiplying these last elements from the left side to $w_{\alpha_2}$, we obtain $E_{V_{h_2}, i}$, $1\leqslant i\leqslant 2m$ for $i$, corresponding to the long roots; then, similarly multiplying the obtained elements from the left side to $w_{\alpha_3}, \dots, w_{\alpha_l}$, we get all  $E_{V_{h_j}, i}$, $1\leqslant j \leqslant l$, $1\leqslant i\leqslant 2m$ for  $i$, corresponding to the long roots.

Then
\begin{align*}
A&=1/2^l(h_{\alpha_1}(-1)+E)\dots (h_{\alpha_l}(-1)+E)=E_{V_{h_1},V_{h_1}}+\dots+E_{V_{h_l},V_{h_l}},\\ B&=A(w_{\alpha_1}-A)(w_{\alpha_2}-A)\dots (w_{\alpha_{l-1}}-A)(w_{\alpha_l}-A)(w_{\alpha_{l-1}}-A) \dots\\
 &\quad\quad  \dots(w_{\alpha_2}-A)(w_{\alpha_1}-A)A=-4E_{V_{h_1},V_{h_1}}+2E_{V_{h_1},V_{h_2}},\\
C_l&=A(w_{\alpha_1}-A)(w_{\alpha_2}-A)\dots (w_{\alpha_{l-1}}-A)(w_{\alpha_l}-A)A=2E_{V_{h_1},V_{h_{l-1}}}-2E_{V_{h_1},V_{h_l}},\\
C_{l-1}&=w_{\alpha_{l-1}}C_l w_{\alpha_{l-1}}=2E_{V_{h_1},V_{h_{l-2}}}-2E_{V_{h_1},V_{h_{l-1}}},\\
\dots &\dots \dots \dots \dots\\
C_{3}&=w_{\alpha_{3}}C_4 w_{\alpha_{3}}=2E_{V_{h_1},V_{h_{2}}}-2E_{V_{h_1},V_{h_{3}}},\\
C_2&=w_{\alpha_{2}}C_3 w_{\alpha_{2}}=2E_{V_{h_1},V_{h_{1}}}+2E_{V_{h_2},V_{h_{1}}}-2E_{V_{h_1},V_{h_{2}}}-2E_{V_{h_2},V_{h_{2}}} .
 \end{align*}

 Now
\begin{align*}
C&=B+C_2=-2E_{V_{h_1},V_{h_{1}}}+2E_{V_{h_2},V_{h_{1}}}-2E_{V_{h_2},V_{h_{2}}},\\
C_1&=w_{\alpha_1}Cw_{\alpha_1}=-4E_{V_{h_1},V_{h_{1}}}+2E_{V_{h_1},V_{h_{2}}}-2E_{V_{h_2},V_{h_{1}}},
\end{align*}
finally,
$$
C_1+B=-2E_{V_{h_2},V_{h_1}}.
 $$
Now with the help of Weil group elements and multiplication of matrix units we get all  $E_{i,j}$, $2m<i,j\leqslant n$, and then all
 $E_{i,j}$, $1\leqslant i\leqslant n$, $2m< j\leqslant n$, where $i$ correspond to the long roots.

 Now taking the matrix $x_{e_1}(t)$ and multiplying it from the both sides to the suitable matrix units  $E_{i,i}$,  we can obtain $E_{i,j}$, where  $i$ corresponds to a long root,  $j$ corresponds to a short one. After that it becomes clear, how to get all matrix units $E_{i,j}$, $1\leqslant i,j \leqslant 2m$ with the help of Weil group elements. Finally, as above, we can get all   $E_{i,j}$, $1\leqslant i\leqslant 2m$, $2m< j\leqslant n$, where $i$ corresponds to the short roots, and therefore all matrix units.
\end{proof}

\begin{lemma}\label{Tema}
If for some $C\in \GL(V)$ we have $C E(\Phi,R) C^{-1}=
E(\Phi,R')$, where $R'$ is a subring in~$R$, then $R'=R$.
\end{lemma}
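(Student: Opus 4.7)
The plan is to reduce the statement to an equality of full matrix rings and then extract $R=R'$ by comparing centers. First, observe that the isomorphism $\rho:R\to R'$ constructed just before the lemma shows that $R'$ is also a local ring with $1/2$ and is ring-isomorphic to~$R$. Hence Lemma~\ref{porozhd} applies both to $R$ and to $R'$: the subring of $M_n(R)$ generated by $E(\Phi,R)$ equals $M_n(R)$, and the subring of $M_n(R)$ generated by $E(\Phi,R')$ equals $M_n(R')$.

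Next, I would use the fact that conjugation by an invertible matrix~$C$ is a ring automorphism of the ambient matrix ring, so it commutes with the operation of passing to the subring generated by a given set. Applied to the hypothesis $C\,E(\Phi,R)\,C^{-1}=E(\Phi,R')$, this gives
\[
C\,M_n(R)\,C^{-1}\;=\;M_n(R').
\]

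Finally, I compare centers. For any commutative ring $S$, the centre of $M_n(S)$ is the set of scalar matrices $S\cdot I$. On the one hand, conjugation by $C$ takes the centre of $M_n(R)$ to the centre of $C\,M_n(R)\,C^{-1}=M_n(R')$; on the other hand, conjugation fixes every scalar matrix pointwise, since $C(\lambda I)C^{-1}=\lambda I$ for all $\lambda\in R$. Combining these,
\[
R\cdot I\;=\;C(R\cdot I)C^{-1}\;=\;Z\bigl(C\,M_n(R)\,C^{-1}\bigr)\;=\;Z(M_n(R'))\;=\;R'\cdot I,
\]
whence $R=R'$. The only point that truly requires checking is the applicability of Lemma~\ref{porozhd} to the subring~$R'$, and this is immediate from $R'\cong R$ via~$\rho$; everything else is formal manipulation of ring-theoretic centres.
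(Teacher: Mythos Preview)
Your proof is correct and follows essentially the same line as the paper: both invoke Lemma~\ref{porozhd} (for $R$ and for $R'$) to pass from $C\,E(\Phi,R)\,C^{-1}=E(\Phi,R')$ to $C\,M_n(R)\,C^{-1}=M_n(R')$. The only difference is in the final step: the paper simply notes that $C\in\GL_n(R)$ forces $C\,M_n(R)\,C^{-1}=M_n(R)$, whence $M_n(R)=M_n(R')$ and so $R=R'$; you instead compare centres, which is a slightly longer but equally valid way to extract the same conclusion.
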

\begin{proof}
Suppose that $R'$ is a proper subring of~$R$.

Then $C M_n(R) C^{-1} =M_n (R')$, since the group $E(\Phi,R)$
generates the ring $M_n(R)$, and the group $E(\Phi,R')=CE(\Phi,R) C^{-1}$
generates the ring $M_n(R')$. It is impossible, since $C\in \GL_n(R)$.
\end{proof}

Therefore we have proved that  $\rho$ is an automorphism of the ring~$R$. Consequently, the composition of the initial automorphism~$\varphi$ and some basis change with a matrix $C\in \GL_n(R)$, (mapping  $E(\Phi,R)$ into itself)
is a ring automorphism~$\rho$. It proves Theorem~2.
 $\square$

\section{Proof of Theorem 3.}

In this section we still consider an elementary adjoint Chevalley group
$E_{\ad}(\Phi,R)$ of the type $B_l$, $l\geqslant 2$,  over a local ring with $1/2$.

Recall, that actually we prove that the normalizer of the Chevalley group $E_{\ad}(\Phi,R)$ in $\GL_{n}(R)$ is $\lambda\cdot G_{\ad}(\Phi,R)$.

\begin{proof}
Suppose that we have some matrix $C=(c_{i,j})\in \GL_n(R)$ such that
$$
C\cdot E_{\ad}(\Phi,R) \cdot C^{-1}=E_{\ad}(\Phi,R).
$$

If $J$ is the maximal ideal (radical) of $R$, then the matrices from
$M_n(J)$ form the radical of the matrix ring $M_n(R)$, therefore
$$
C\cdot M_n(J)\cdot C^{-1}=M_n(J),
$$
consequently
$$
C\cdot (E+M_n(J))\cdot C^{-1}=E+M_n(J),
$$
so
$$
C\cdot E_{\ad}(\Phi,R,J)\cdot C^{-1}=E_{\ad}(\Phi,R,J),
$$
since $E_{\ad}(\Phi,R,J)=E_{\ad}(\Phi,R)\cap (E+M_n(J)).$

Therefore the image $\overline C$ of~$C$ under factorization of the
ring $R$ by its radical~$J$ gives us an   automorphism--conjugation of
the Chevalley group $E_{\ad} (\Phi,k)$, where $k=R/J$ is a residue
field of~$R$.

\begin{lemma}\label{fromAnton}
If $E_{\ad}(\Phi,k)$ is a Chevalley group of type $B_l$, $l\geqslant 2$,  over a field $k$ of characteristic $\ne 2$, then every automorphism--conjugation is inner.
\end{lemma}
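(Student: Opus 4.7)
The plan is to invoke the classical Steinberg--Humphreys description of automorphisms of Chevalley groups over fields, and then to eliminate the ring automorphism factor by comparing eigenvalue multisets of a semisimple element.

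First, by Steinberg~\cite{Stb1} for finite $k$ and Humphreys~\cite{H} for infinite $k$, every abstract automorphism of $E_{\ad}(\Phi,k)$ with $\Phi=B_l$, $l\geqslant 2$, decomposes as a composition of inner (by an element of $G_{\ad}(\Phi,k)$), ring, graph, and central automorphisms. The Dynkin diagram of $B_l$ has no non-trivial symmetries, so the graph factor is absent; since $E_{\ad}(\Phi,k)$ is perfect, the central factor acts trivially on it. Therefore $i_C = \rho \circ i_g$ for some field automorphism $\rho\colon k\to k$ and some $g\in G_{\ad}(\Phi,k)$.

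Second, I would show $\rho=\mathrm{id}$. Pick a long root $\alpha=\alpha_1$ and consider $h_\alpha(t)$ for $t\in k^*$. Its eigenvalues in the adjoint representation are $\{t^{\langle\beta,\alpha\rangle} : \beta\in\Phi\}$ together with $l$ copies of $1$ coming from the Cartan, and the value $\langle\beta,\alpha\rangle=2$ is attained uniquely at $\beta=\alpha$, so $t^2$ is a simple eigenvalue. Writing the identity $C\,h_\alpha(t)\,C^{-1} = g^\rho\,h_\alpha(\rho(t))\,(g^\rho)^{-1}$, where $g^\rho$ denotes the entry-wise image of $g$ under $\rho$, and noting that conjugation preserves eigenvalue multisets, one obtains
\begin{equation*}
\{t^{\langle\beta,\alpha\rangle} : \beta\in\Phi\} = \{\rho(t)^{\langle\beta,\alpha\rangle} : \beta\in\Phi\} \quad\text{as multisets for every } t\in k^*.
\end{equation*}
The simple eigenvalue of exponent $+2$ on the left can only be matched with $\rho(t)^2$ on the right (matching it with $\rho(t)^{-2}$ would yield $\rho(t)=\pm t^{-1}$, which is incompatible with additivity of $\rho$), so $\rho(t)=\varepsilon(t)\,t$ with $\varepsilon(t)\in\{\pm 1\}$. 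Multiplicativity of $\rho$ makes $\varepsilon\colon k^*\to\{\pm 1\}$ a homomorphism, and plugging into $\rho(1+t)=1+\rho(t)$ together with the hypothesis $\mathrm{char}(k)\ne 2$ forces $\varepsilon\equiv 1$, hence $\rho=\mathrm{id}$.

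Third, with $\rho$ trivial the matrix $M=g^{-1}C$ commutes with every element of $E_{\ad}(\Phi,k)$. The proof of Lemma~\ref{porozhd} applies verbatim over the field~$k$ (since $2\in k^*$ and $k^*$ additively generates $k$), so $E_{\ad}(\Phi,k)$ generates $M_n(k)$ as a $k$-algebra and $M$ must lie in the centre of $M_n(k)$, that is, $M=\lambda I$ for some $\lambda\in k^*$. Consequently $C=\lambda g$, and $i_C=i_g$ is inner, induced by $g\in G_{\ad}(\Phi,k)$, as required.

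The main obstacle is the second step. The multiset identity alone leaves the sign ambiguity $\rho(t)=\pm t$, and the uniform treatment across all fields (including small finite ones, where Frobenius powers are a priori candidates for $\rho$) relies on pinning down the sign via additivity of $\rho$ applied to $1+t$; this is precisely where the hypothesis $\mathrm{char}(k)\ne 2$ is essential.
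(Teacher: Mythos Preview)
Your argument is correct in outline but takes a substantial detour compared with the paper. The paper's proof is shorter and never attempts to show $\rho=\mathrm{id}$ directly. After writing $i_C=i_g\circ\rho$ and setting $C'=g^{-1}C$, so that conjugation by $C'$ realises the ring automorphism $\rho$, one simply observes that $\rho(x_\alpha(1))=x_\alpha(\rho(1))=x_\alpha(1)$ for every $\alpha\in\Phi$. Hence $C'$ commutes with every $x_\alpha(1)$; since these generate $E_{\ad}(\Phi,k)$, which spans $M_n(k)$ (Lemma~\ref{porozhd} applies verbatim over~$k$), the matrix $C'$ is scalar. This gives $i_C=i_g$ immediately, and $\rho=\mathrm{id}$ drops out as a by-product rather than something to be established in advance. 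Your Step~2 eigenvalue analysis is therefore unnecessary: the trivial fact $\rho(1)=1$ already forces $C'$ to centralise the generators.

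There is also a soft spot in your Step~2 as written. From the multiset identity you deduce, for each individual $t$, that $t^2$ equals either $\rho(t)^2$ or $\rho(t)^{-2}$, and you dismiss the second option as ``incompatible with additivity of~$\rho$''. But additivity is a global constraint; for a single $t_0$ the equality $\rho(t_0)=\pm t_0^{-1}$ contradicts nothing by itself. Turning this into a proof that $\rho(t)=\varepsilon(t)t$ for \emph{all} $t$ requires further argument (tracking $\rho(1+t_0)$ against $\rho(t_0)$, and separately handling those $t$ for which eigenvalues of $h_\alpha(t)$ collide). All of this disappears once you notice, as the paper does, that $\rho$ already fixes each $x_\alpha(1)$.
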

\begin{proof}
By Theorem~30 from~\cite{Steinberg} every automorphism of a Chevalley group of the type~$B_l$ over a field~$k$ of characteristic $\ne 2$ is standard, i.\,e., for the given type it is a composition of inner and ring automorphisms. Suppose that a matrix~$C$ is from the normalizer of $E_{\ad}(\Phi,k)$ in $\GL_{n}(k)$. Then $i_C$ is an automorphism of $E_{\ad}(\Phi,k)$, therefore we have $i_C=i_g\circ \rho$, $g\in G_{\ad}(\Phi,k)$, where $\rho$ is a ring automorphism. Consequently, $i_{g^{-1}}i_C=i_{C'}=\rho$ and some matrix $C'\in \GL_{n}(k)$ defines a ring automorphism~$\rho$. For every root $\alpha\in \Phi$ we have $\rho(x_\alpha(1))=x_\alpha(1)$, so $C'x_\alpha(1)=x_\alpha(1) C'$ for all $\alpha\in \Phi$. Thus, the matrix $C'$ is scalar, and the automorphism $i_C$ is inner.
\end{proof}

By Lemma~\ref{fromAnton}
$$
i_{\overline C}=i_g,\quad  g\in G_{\ad}(\Phi,k).
$$

Since over a field every element of the Chevalley group is a product of some element from elementary subgroup (that is generated by unipotents $x_\alpha(t)$)
and some torus element, then the matrix $g$ can be decomposed to the product
$t_{\alpha_1}(X_1)\dots t_{\alpha_l}(X_l) x_{\alpha_{i_1}}(Y_1)\dots
x_{i_N}(Y_N)$, where $X_1,\dots,X_l,Y_1,\dots, Y_N\in k$,
$t_{\alpha_k}(X)$ is a torus element, corresponding to a
homomorphism~$\chi$ such that $\chi(\alpha_k)=X$, $\chi(\alpha_j)=1$
for $j\ne k$, $1\leqslant j\leqslant l$.

Since every element $X_1,\dots, X_l, Y_1,\dots, Y_N$ is a residue
class in the ring~$R$, we can choose (by arbitrary way) elements
$x_1\in X_1$, \dots, $x_l\in X_l$, $y_1\in Y_1$, \dots, $y_N\in
Y_n$, and the element
$$
g'=t_{\alpha_1}(x_1)\dots t_{\alpha_l}(x_l)
x_{\alpha_{i_1}}(y_1)\dots x_{i_N}(y_N)
$$
satisfies the conditions $g'\in G_{\ad}(\Phi,R)$ and $\overline
{g'}=g$.

Consider the matrix $C'={g'}^{-1}\circ {A_\delta}^{-1}\circ C$. This matrix
also normalizes an elementary Chevalley group $E_{\ad}(\Phi,R)$,
moreover $\overline {C'}=E$. Therefore we reduce the description of
matrices from the normalizer of $E_{\ad}(\Phi,R)$ to the description
of matrices from this normalizer, equivalent to~$E$ modulo~$J$.

Consequently we can suppose that our initial matrix $C$ is
equivalent to~$E$ modulo~$J$.

We want to show that $C\in G_{\ad}(\Phi,R)$.

At first we want to prove one technical lemma.

\begin{lemma}\label{prod2}
Let $X=\lambda t_{\alpha_1}(s_1)\dots t_{\alpha_l}(s_l)x_{\alpha_1}(t_1)\dots x_{\alpha_m}(t_m)x_{-\alpha_1}(u_1)\dots x_{-\alpha_m}(u_m)\in \lambda E_{\ad}(\Phi,R,J)$.
Then the matrix $X$ has such $n+1$ coefficients \emph{(}precisely described in the proof of this lemma\emph{)} that uniquely define elements $\lambda$, $s_1,s_2$, $t_1,\dots,t_m$, $u_1,\dots, u_m$.
\end{lemma}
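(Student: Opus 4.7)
The plan is to select $n+1$ distinguished entries of $X$ whose values uniquely determine the $n+1$ parameters via a triangular system. Since $X \in \lambda E_{\ad}(\Phi, R, J)$, writing $X = \lambda T U^+ U^-$ with $T = t_{\alpha_1}(s_1) \cdots t_{\alpha_l}(s_l)$, $U^+ = \prod_j x_{\alpha_j}(t_j)$, $U^- = \prod_j x_{-\alpha_j}(u_j)$, all of $s_i - 1, t_j, u_j$ lie in $J$, and $X/\lambda$ expands as
\begin{equation*}
X/\lambda \equiv E + \sum_{i=1}^{l} (s_i - 1)\, D_i + \sum_{j=1}^{m} t_j\, \ad(x_{\alpha_j}) + \sum_{j=1}^{m} u_j\, \ad(x_{-\alpha_j}) \pmod{J^2},
\end{equation*}
where $D_i$ is the diagonal matrix whose $(v_\mu, v_\mu)$-entry equals $\langle \mu, \alpha_i \rangle$.

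For each positive root $\alpha_j$, I would pick an entry at position $(v_{\alpha_j}, V_{h_{i(j)}})$ with $i(j)$ chosen so that $\langle \alpha_j, \alpha_{i(j)} \rangle \ne 0$: by the expansion above, the leading contribution at this position is $-\lambda t_j \langle \alpha_j, \alpha_{i(j)}\rangle$, coming uniquely from $x_{\alpha_j}(t_j)$, so this entry yields $t_j$ modulo $J^2$. Symmetrically, positions $(V_{h_{i(j)}}, v_{-\alpha_j})$ yield each $u_j$. For $\lambda$, I would use a diagonal entry at a Cartan position $(V_{h_1}, V_{h_1})$, on which $T, U^\pm$ act trivially at first order. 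For $s_1, \ldots, s_l$, I would use the $l$ diagonal entries $(v_{\alpha_i}, v_{\alpha_i})$ for the simple $\alpha_i$, yielding the system $\lambda \prod_j s_j^{\langle \alpha_i, \alpha_j \rangle}$; the Cartan matrix of $B_l$ has determinant $2 \in R^*$, so this $l \times l$ system is invertible and recovers the $s_j$. Ordering the positive roots by height renders the full system triangular, and an iterative lifting (substitute current approximations into $X$, read next-order corrections from the same entries) pins the parameters down exactly.

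The main obstacle is the bookkeeping needed to verify that the designated entries really isolate one parameter each at leading order: for each $(v_{\alpha_j}, V_{h_{i(j)}})$ position, contributions from the other $t_k$ with $k \ne j$, from the $u_k$, and from the $s_k - 1$ must all lie in $J^2$. This is a routine calculation using the Chevalley commutator formulas and the structure constants of the adjoint representation, which guarantee that $\ad(x_{\alpha_j})$ is the unique generator sending $V_{h_{i(j)}}$ to a nonzero multiple of $v_{\alpha_j}$. With this linear independence verified, the count $1 + l + m + m = n + 1$ gives exactly enough equations for the $n + 1$ parameters, and the inverse-function-type argument delivers the unique determination claimed.
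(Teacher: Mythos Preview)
Your approach has a concrete error in the choice of positions. The entry $(V_{h_{i(j)}}, v_{-\alpha_j})$ does \emph{not} isolate $u_j$ at first order: since $X = \lambda T U^+ U^-$, applying $X$ to $v_{-\alpha_j}$ first passes through $U^-$ (which keeps it in the span of negative-root vectors), and then $U^+$ produces the $\mathcal{H}$-component via $\ad(x_{\alpha_j})(x_{-\alpha_j}) = h_{\alpha_j}$. Thus the leading contribution at this position is proportional to $t_j$, not $u_j$, and your Jacobian has the $t_j$-column hitting two of your chosen rows while the $u_j$-column hits none---the Jacobian is singular. The fix is easy: use $(v_{-\alpha_j}, V_{h_{i(j)}})$ instead, which by the symmetric computation (now $U^-$ acting on $V_{h_{i(j)}}$ produces the $v_{-\alpha_j}$-component) does pick out $u_j$. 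A second, minor slip: $t_{\alpha_i}(s)$ in the paper's sense is the torus element with $\chi(\alpha_i)=s$ and $\chi(\alpha_j)=1$ for $j\ne i$, so the diagonal action on $v_{\alpha_k}$ is multiplication by $s^{\delta_{ik}}$, not $s^{\langle\alpha_k,\alpha_i\rangle}$; your $l\times l$ block is the identity, not the Cartan matrix (still invertible, so the conclusion survives).

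Once corrected, your Jacobian argument does yield uniqueness over a local ring without iterative lifting: write $F(a)-F(b)=M(a,b)(a-b)$ with $M(a,b)\equiv DF(\text{base})\pmod J$, hence $M(a,b)$ invertible, hence $a=b$. This is a genuinely different route from the paper. The paper does not linearize; it fixes a chain of roots $\gamma_1=e_1+e_2>\gamma_2>\cdots>\gamma_{2l-1}=\alpha_2$, each obtained from the previous by subtracting a simple root, and reads off the parameters \emph{exactly} from the entries at positions $(-\gamma_p,-\gamma_q)$ and $(-\gamma_p,h_{\gamma_p})$, exploiting that to the lowest root $-\gamma_1$ no negative root can be added. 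This produces an explicit exact triangular system in which each new entry introduces one new unknown with invertible coefficient---no Jacobian, no lifting. Beyond taste, the paper's choice has a practical payoff: its specific list of positions is reused verbatim in the proof of Theorem~3, where one engineers $Z=D^{-1}C-E$ to vanish precisely at those positions and then exploits those zeros block by block. Your alternative positions, while adequate for the lemma in isolation, would force a rewrite of that later argument.
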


\begin{proof}
Firstly we mark a special root sequence in the system $\Phi$ of the type~$B_l$.

As we remember, every root of the system $B_l$ has the form either $\pm e_i\pm e_j$, $i\ne j$, or $\pm e_i$.

Consider the sequence
\begin{align*}
\gamma_1&=e_1+e_2=\alpha_1+2\alpha_2+\dots+2\alpha_l,\\
\gamma_2&=e_1+e_3=\alpha_1+\alpha_2+2\alpha_3+\dots+2\alpha_l,\\
&\dots \dots \dots \dots\\
\gamma_{l-1}&=e_1+e_l=\alpha_1+\alpha_2+\dots+\alpha_{l-1}+2\alpha_l,\\
\gamma_l&=e_2+e_l=\alpha_2+\dots+\alpha_{l-1}+2\alpha_l,\\
\gamma_{l+1}&=e_2=\alpha_2+\dots+\alpha_{l-1}+\alpha_l,\\
\gamma_{l+2}&=e_2-e_l=\alpha_2+\dots+\alpha_{l-1},\\
&\dots \dots \dots \dots\\
\gamma_{2l-2}&=e_2-e_4=\alpha_2+\alpha_3,\\
\gamma_{2l-1}&=e_2-e_3=\alpha_2.
\end{align*}

This sequence satisfies the following properties:

1. $\gamma_1$ is the maximal root of the system.

2. $\gamma_{2l-1}$ is a simple root.

3. Every root is obtained from the previous one by subtracting some simple root, and  the first $l$ subtracted simple roots are different.

4. All roots in the systems $A_l,D_l$ are either members of the corresponding sequences, or differences of some two roots from these sequences.

To find an element on this position in the matrix we need to write all sequences of roots $\beta_1, \dots, \beta_p$ with two following properties:

1. $\mu + \beta_1\in \Phi$, $\mu +\beta_1+\beta_2\in \Phi$, \dots, $\mu+\beta_1+\dots+\beta_i\in \Phi$, \dots,
$\mu+\beta_1+\dots+\beta_p=\nu$.

2. In our initial numerated sequence  $\alpha_1,\dots,\alpha_m, -\alpha_1,\dots, -\alpha_m$ roots $\beta_1,\dots,\beta_p$ are staying strictly from right to left.

Finally, in the matrix $X$ on the position $(\mu,\nu)$ there is sum of all products $\pm \beta_1\cdot\beta_2\dots \beta_p$ by all sequences of roots with these two properties, multiplied by $d_\mu=\lambda s_1^{\langle \alpha_1,\mu\rangle}\dots s_l^{\langle \alpha_l,\mu\rangle}$. If $\mu=\nu$, then we must add~$1$ to the sum.

We shall find the obtained numbers $\lambda, s_1,\dots,s_l,t_1,\dots, t_m,u_1,\dots,u_m$ step by step.

At first we consider in the matrix $X$ the position $(-\gamma_1,-\gamma_1)$. We can not add to the root $-\gamma_1$ any negative root so that in the results we get a root again. If in the sequence $\beta_1,\dots, \beta_p$ the first root is positive, then all other roots have to be positive. Therefore, on the place $(-\gamma_1,-\gamma_1)$ there is just $d_{-\gamma_1}$, and we can know it now. Now consider the place $(-\gamma_1,-\gamma_2)$.  By the same reason  the obtained sequence is only  $\alpha_{k_1}=\gamma_1-\gamma_2$, i.\,e., some simple root $\alpha_{k_1}$. Thus, it is $\pm d_{-\gamma_1} t_{k_1}$ on this place. So we also have found $t_{k_1}$.
If we consider the positions  $(-\gamma_2,-\gamma_2)$ and $(-\gamma_2,-\gamma_1)$, we can see that by similar reasons there are $d_{-\gamma_2}(1\pm u_{k_1}t_{k_1}$ and $\pm d_{-\gamma_2} u_{k_1}$, respectively. So we know $d_{-\gamma_2}$ and $u_{k_1}$.

Now we come to the second step.
From above arguments in the matrix $X$ on the place $(-\gamma_2,-\gamma_3)$ there is $d_{-\gamma_2}(\pm t_{k_2}\pm u_{k_1}t_{k_{1,2}})$, where $\alpha_{k_2}=\gamma_2-\gamma_3$,
$\alpha_{k_{1,2}}=\alpha_{k_1}+\alpha_{k_2}$; on the place $(-\gamma_3,-\gamma_2)$ there is $d_{-\gamma_3}(\pm u_{k_2}\pm u_{k_{1,2}}t_{k_{1}})$; on the place $(-\gamma_1,-\gamma_3)$ there is $d_{-\gamma_1}(\pm t_{k_{1,2}}\pm t_{k_{1}}t_{k_{2}})$  (the second summand can absent, if $k_1< k_2$); on the place $(-\gamma_3,-\gamma_1)$ there is $d_{-\gamma_3}(\pm u_{k_{1,2}}\pm u_{k_{2}}u_{k_{1}})$ (the second summand can absent, if $k_2< k_1$), finally, on the place $(-\gamma_3,-\gamma_3)$ there is $d_{-\gamma_3}(1\pm u_{k_{1,2}}t_{k_{1,2}}\pm u_{k_{1}}t_{k_{1}})$. From these five equations with five variables (from radical) we uniquely can define values of variables, after that we suppose that $d_{-\gamma_1}, d_{-\gamma_2}, d_{-\gamma_3}, t_1, t_2, t_{1,2},u_1, u_2,u_{1,2}$ are known.

Suppose now that we know  numbers $t_i, u_j$ for all indices corresponding to the roots of the form $\gamma_p-\gamma_q$, $1\leqslant p,q< s$, all $d_{-\gamma_r}$, $1\leqslant r<s$, and also $s\leqslant l+1$.
Consider the positions $(-\gamma_1,-\gamma_s)$, $(-\gamma_s,-\gamma_1)$, $(-\gamma_2,-\gamma_s)$, $(-\gamma_s,-\gamma_2)$, \dots, $(-\gamma_{s-1},-\gamma_s)$,
$(-\gamma_s,-\gamma_{s-1})$ and $(-\gamma_s,-\gamma_{s})$ in the matrix~$X$. Clear that on every position $(-\gamma_i,-\gamma_s)$, $1\leqslant i<s$, we have the sum of the number $t_p$ (where $p$ is the number of the root $\gamma_i-\gamma_s$, if it is a root) and products of different numbers $t_a,u_b$, where only one number in the product is not known yet, and all others numbers are known, all of them are from the radical, multiplied by $d_{-\gamma_i}$. The same picture  is on the positions $(-\gamma_s,-\gamma_i)$, $1\leqslant i<s$, but there the single element (without multipliers) is not $t_p$, but $u_p$. On the last place there is $d_{-\gamma_s}(1+\Sigma)$, where $\Sigma$ is also a sum of described type. Therefore we have the  number  of  (not uniform) linear equations, greater to~$1$ than the number of roots $\pm (\gamma_i-\gamma_s)$, with the same number of variables, in every equation precisely one variable has an invertible coefficient, other coefficients are from the radical, for different equations such variable are different. Clear that such system of equation has the solution, and it it unique. Consequently, we have made the induction step and now we know the numbers $t_i, u_j$ for all indices corresponding to the roots of the form $\gamma_p-\gamma_q$, $1\leqslant p,q\leqslant s$, and also $d_{-\gamma_s}$.

 After the $l+1$-th step ($s=l+1$) we know all $d_{-\gamma_1},\dots,d_{-\gamma_{l+1}}$, that uniquely define $\lambda, s_1,\dots, s_l$. After that we know all $d_{-\gamma_i}$, $l+1<i\leqslant \gamma_k$.
 On following steps we do not consider the last position of the form $(-\gamma_i,-\gamma_{i})$,  the  number of equations and variables decreases to one.

On the last step we know numbers $t_i, u_j$ for all indices corresponding to the roots of the form $\gamma_p-\gamma_q$, $1\leqslant p,q
\leqslant k$. Consider now in the matrix $X$ the positions $(-\gamma_1,h_{\gamma_1})$, $(h_{\gamma_1},-\gamma_1)$, $(-\gamma_2,h_{\gamma_2})$, $(h_{\gamma_2},-\gamma_2)$, \dots, $(-\gamma_k,h_{\gamma_k})$, $(h_{\gamma_k},-\gamma_k)$. Completely similar to the previous arguments we can find all coefficients  $t$ and $u$, corresponding to the roots $\pm \gamma_1,\dots, \pm \gamma_k$.

After that  we know all coefficients in the product;  moreover, in the proof of the lemma we have described the correspondence between them and positions of the matrix~$X$.

Therefore Lemma is completely proved.
\end{proof}

Now we return to our main proof.
Recall that we work with the matrix~$C$, equivalent to the unit matrix modulo radical, and mapping elementary Chevalley group to itself.

For every root $\alpha\in\Phi$ there is the equation
\begin{equation}\label{osn_eq}
C x_{\alpha}(1)C^{-1}=x_{\alpha}(1)\cdot g_\alpha,\quad g_\alpha\in
G_{\ad}(\Phi,R,J).
\end{equation}
Every element $g_\alpha \in G_{\ad}(\Phi,R,J)$ can be decomposed into the product
\begin{equation}\label{razl_rad}
 t_{\alpha_1}(1+a_1)\dots
t_{\alpha_l}(1+a_l)x_{\alpha_1}(b_1)\dots
x_{\alpha_m}(b_m)x_{\alpha_{-1}}(c_1)\dots x_{\alpha_{-m}}(c_m),
\end{equation}
where $a_1,\dots,a_l,b_1,\dots,b_m,c_1,\dots, c_m\in J$ (see,
for example,~\cite{Abe1}).

Let $C=E+X=E+(x_{i,j})$. Then for every root~$\alpha\in \Phi$
we can write a matrix equation~\ref{osn_eq} with variables $x_{i,j},
a_1,\dots,a_l,b_1,\dots,b_m,c_1,\dots, c_m$, every of them is from the radical.

We change these equations.
We consider the matrix~$C$ and ``imagine'', that it is a matrix from Lemma~\ref{prod2}. Then by some its concrete $n+1$~positions we can  ``reproduce'' all coefficients $\lambda, s_1,\dots,s_l, t_1,\dots,t_m,u_1,\dots, u_m$ in decomposition of matrix in the product from Lemma~\ref{prod2}. In the result we obtain some matrix $D$ from
elementary Chevalley group, every its coefficient is some (known) function of coefficients of the matrix~$C$.
Change now the equations~\eqref{osn_eq} to the equations
\begin{equation}\label{fol_eq}
D^{-1}C x_{\alpha}(1)C^{-1}D=x_{\alpha}(1)\cdot {g_\alpha}',\quad {g_\alpha}'\in
G_{\ad}(\Phi,R,J).
\end{equation}
We again get matrix equations but with variables $y_{i,j},
a_1',\dots,a_l',b_1',\dots,b_m',c_1',\dots, c_m'$, every of them also is from the radical, and every $y_{p,q}$ is some known function from  $x_{i,j}$. We denote the matrix $D^{-1}C$ by~$C'$.

We need to show that a solution exists only if all variables with  primes are zeros. Some $x_{i,j}$ also have to be zeros, and others disappear in equations. Since the equations are too complicated we shall consider the linearized system of equations. It is sufficient to show that all variables that do not disappear in the linearized system (suppose that there are  $q$~such variables) are members of some subsystem consisting from $q$ linear equations with an invertible (in the ring~$R$) determinant.

in other words, from the matrix equations we shall show step by step that all variables are zeros.

Clear that linearization of the product $Y^{-1}(E+X)$ gives some matrix $E+(z_{i,j})$, that have zeros on all positions described in Lemma~\ref{prod2}.

To find the final form of the linearized system we write it as follows:
\begin{multline*}
(E+Z)x_\alpha(1) =x_\alpha(1)(E+a_1T_1+a_1^2\dots)\dots
(E+a_lT_l+a_1^2\dots)\cdot\\
\cdot(E+b_1X_{\alpha_1}+b_1^2X_{\alpha_1}^2/2)\dots
(E+c_mX_{-\alpha_1}+c_m^2X_{-\alpha_m}^2/2)(E+Z),
\end{multline*}
where $X_\alpha$ the corresponding Lie algebra element in the adjoint representation, the matrix $T_i$ is diagonal, has on the diagonal~$p$ at the place corresponding to the vector $v_k$ iff in decomposition of $\alpha_k$ in the sum of simple roots the root
 $\alpha_i$ entries in this decomposition  $p$~times ($p$
can be zero or negative); on the places corresponding to the basis vectors  $V_j$, the matrix has zeros on the diagonal.

Then the linearized system is
$$
Zx_{\alpha}(1)-x_{\alpha}(1)(Z+a_1T_1+\dots+a_lT_l+b_1X_{\alpha_1}+\dots+c_mX_{\alpha_m})=0.
$$
Clear that for simple roots $\alpha_i$ the summand $b_i X_{\alpha_i}$ is absent; besides, $2m$ fixed elements of~$Z$ are zeros.
This equation can be written for every $\alpha\in
\Phi$ (naturally, with other  $a_j, b_j, c_j$), and can be written only for the roots generating the Chevalley groups, i.\,e., for $\alpha_1,\dots,
\alpha_l, -\alpha_1, \dots, -\alpha_l$. The number free variables is not changed:

$$
\begin{cases}
Zx_{\alpha_1}(1)-x_{\alpha_1}(1)(Z+a_{1,1}T_1+\dots +a_{l,1}T_l+\\
\ \ \ \ \
+b_{1,1}X_{\alpha_1}+\dots+b_{m,1}X_{\alpha_m}
+c_{1,1}X_{-\alpha_1}+\dots+c_{m,1}X_{-\alpha_m})=0;\\
\dots\\
Zx_{\alpha_l}(1)-x_{\alpha_l}(1)(Z+a_{1,l}T_1+\dots +a_{l,l}T_l+\\
\ \ \ \ \
+b_{1,l}X_{\alpha_1}+\dots+X_{\alpha_m}b_{m,1}X_{\alpha_m}
+c_{1,l}X_{-\alpha_1}+\dots+c_{m,l}X_{-\alpha_m})=0;\\
Zx_{-\alpha_1}(1)-x_{-\alpha_1}(1)(Z+a_{1,l+1}T_1+\dots+a_{l,l+1}T_l+\\
\ \ \ \ \
+b_{1,l+1}X_{\alpha_1}+\dots+b_{m,l+1}X_{\alpha_m}
+c_{1,l+1}X_{-\alpha_1}+\dots+c_{m,l+1}X_{-\alpha_m})=0;\\
\dots\\
Zx_{-\alpha_l}(1)-x_{-\alpha_l}(1)(Z+a_{1,2l}T_1+\dots+a_{l,2l}T_l+\\
\ \ \ \ \
+b_{1,2l}X_{\alpha_1}+\dots+b_{m,2l}X_{\alpha_m}
+c_{1,2l}X_{-\alpha_1}+\dots+c_{m,2l}X_{-\alpha_m})=0.
\end{cases}
$$

Suppose that we fixed the obtained linear uniform system of equations with described $n+1$ zero positions. Recall that our aim is to show that all values  $z_{i,j}$, $a_{s,t}, b_{s,t}, c_{s,t}$ are zeros.

At first we consider the pair of equations with numbers~$1$ and~$l+1$.
Clear that all other corresponding pairs for the long roots have the same construction (since there is a substitution of basis that moves any long root to any other long root).

We can rename basis so that the matrices $x_{\alpha_1}(1)$ and
$x_{-\alpha_1}(1)$ have the forms of three diagonal blocks: the first one is corresponded to the basis vectors $\{ x_{\alpha_1}, x_{-\alpha_1},
h_1, h_2\}$, the second one corresponded to $\{ x_\alpha\mid \langle \alpha_1,
\alpha\rangle=\pm 1\}$, and the third corresponded to $\{ x_\alpha\mid \langle
\alpha_1,\alpha\rangle=0; h_3,\dots, h_l\}$. Also the third block is just a unit matrix. Let

$$
x_{\alpha_1}(1)=\begin{pmatrix} A& 0& 0\\
0& B& 0\\
0& 0& E
\end{pmatrix},\quad
x_{-\alpha_1}(1)=\begin{pmatrix} A'& 0& 0\\
0& B'& 0\\
0& 0& E
\end{pmatrix}
$$
and suppose that all matrices $Y$ under consideration also have the similar block form:
$$
Y=\begin{pmatrix} Y_{11}& Y_{12}& Y_{13}\\
Y_{21}& Y_{22}& Y_{23}\\
Y_{31}& Y_{32}& Y_{33}
\end{pmatrix}.
$$

Denote the matrix
$a_{1,1}T_1+a_{2,1}T_2+\dots+a_{l,1}T_l+b_{1,1}X_{\alpha_1}+
\dots+b_{m,1}X_{\alpha_m}+c_{1,1}X_{-\alpha_1}+\dots+c_{m,1}X_{-\alpha_m}$
by $D$. Note that from the basis construction  $D_{1,3}=D_{3,1}=0$.
Then the first equation is
$$
\begin{pmatrix}
Z_{11}A& Z_{12}B& Z_{13}\\
Z_{21}A& Z_{22}B& Z_{23}\\
Z_{31}A& Z_{32}B& Z_{33}
\end{pmatrix}=
\begin{pmatrix}
AZ_{11}& AZ_{12}& AZ_{13}\\
BZ_{21}& BZ_{22}& BZ_{23}\\
Z_{31}& Z_{32}& Z_{33}
\end{pmatrix}+
\begin{pmatrix}
AD_{11}& AD_{12}& 0\\
BD_{21}& BD_{22}& BD_{23}\\
0 & D_{32}& D_{33}
\end{pmatrix}.
$$
From the written equality directly follows that $D_{33}=0$.
Therefore, $a_{3,1}=\dots=a_{l,1}=0$, $b_{k,1}=c_{k,1}=0$ при
$\langle \alpha_1, \alpha_k\rangle=0$.

Now consider a positive  root $\alpha$ such that $\beta=\alpha+\alpha_1\in \Phi$ and $\alpha,\beta$ are from the first $l+1$ members of the sequence $\gamma_1,\dots, \gamma_k$, namely, $\gamma_l=e_2+e_l$  (similar pairs of roots in the sequence $\gamma_1,\dots, \gamma_k$ can be found also for $\alpha_2,\dots, \alpha_l$ according to the property~3 of the sequence $\gamma_1,\dots, \gamma_k$).

Consider the basis part $\alpha,-\alpha,\beta,-\beta$. For the matrices $x_{\alpha_1}(1)$ and
$x_{-\alpha_1}(1)$ this basis part is a direct summand, therefore one can consider it independently. By the construction we know that $z_{-\alpha,-\alpha}=z_{-\beta,-\beta}=z_{-\alpha,-\beta}=z_{\beta,-\alpha}=0$. We have the condition
\begin{multline*}
\begin{pmatrix}
z_{\alpha,\alpha}& z_{\alpha,-\alpha}& z_{\alpha,\beta}& z_{\alpha,-\beta}\\
z_{-\alpha,\alpha}& 0& z_{-\alpha,\beta}& 0\\
z_{\beta,\alpha}& z_{\beta,-\alpha}& z_{\beta,\beta}& z_{\beta,-\beta}\\
z_{-\beta,\alpha}& 0& z_{-\beta,\beta}& 0
\end{pmatrix}
\begin{pmatrix}
1& 0& 0& 0\\
0& 1& 0& -1\\
1& 0& 1& 0\\
0& 0& 0& 1
\end{pmatrix}=\\ =
\begin{pmatrix}
1& 0& 0& 0\\
0& 1& 0& -1\\
1& 0& 1& 0\\
0& 0& 0& 1
\end{pmatrix}
\left(\begin{pmatrix}
z_{\alpha,\alpha}& z_{\alpha,-\alpha}& z_{\alpha,\beta}& z_{\alpha,-\beta}\\
z_{-\alpha,\alpha}& 0& z_{-\alpha,\beta}& 0\\
z_{\beta,\alpha}& z_{\beta,-\alpha}& z_{\beta,\beta}& z_{\beta,-\beta}\\
z_{-\beta,\alpha}& 0& z_{-\beta,\beta}& 0
\end{pmatrix}+
\begin{pmatrix}
a_{\alpha} & 0& -c_{1,1}& 0\\
0& -a_{\alpha}& 0& -b_{1,1}\\
b_{1,1}& 0& a_{\beta}& 0& 0\\
0& c_{1,1}& 0& -a_{\beta}
\end{pmatrix}\right) .
\end{multline*}

As the result we obtain that the following matrix is equal to zero:
$$
\begin{pmatrix}
z_{\alpha,\beta}-a_{\alpha}& 0& c_{1,1}& z_{\alpha,-\alpha}\\
z_{-\alpha,\beta}+z_{-\beta,\alpha}& c_{1,1}+a_{\alpha}& z_{-\beta,\beta}& b_{1,1}-a_{\beta}\\
z_{\beta,\beta}-z_{\alpha,\alpha}-a_{\alpha}-b_{1,1}& -z_{\alpha,-\alpha}& -z_{\alpha,\beta}+c_{1,1}-a_{\beta}&
-z_{\beta,-\alpha}-z_{\alpha,-\beta}\\
z_{-\beta,\beta}& -c_{1,1}& 0& a_{\beta}
\end{pmatrix}.
$$
So we see that $a_{\alpha}=a_{\beta}=b_{1,1}=c_{1,1}=0$. Since we know that $a_{3,1}=\dots=a_{l,1}=0$,
we have $a_{1,1}=a_{2,1}=0$.

Consider such a root $\alpha\in \Phi^+$, that $\alpha+\alpha_1\in \Phi^+$. In the sequence $\gamma_1,\dots, \gamma_k$ we find such roots $\gamma_p$ and $\gamma_q$, that $\gamma_q-\gamma_p=\alpha$. The roots $\alpha$ can be of three following forms: $\alpha=e_2-e_i$, $3\leqslant i\leqslant l$; $\alpha=e_2+e_i$,  $3\leqslant i\leqslant l$; $\alpha=e_2$. For these three cases only two situations can be possible:

1) the root $\gamma_p$ is orthogonal to the root~$\alpha_1$, and the root $\gamma_q$ is not orthogonal. Then $\gamma'=\gamma_q+\alpha_1$ is also a root, we can consider the basis part $\gamma_p,-\gamma_p,\gamma_q,-\gamma_q, \gamma',-\gamma'$, that is an invariant direct summand for $x_{\alpha_1}(1)$. On this basis part
$$
x_{\alpha_1}(1)=\begin{pmatrix}
1& 0& 0& 0& 0& 0\\
0& 1& 0& 0& 0& 0\\
0& 0& 1& 0& 0& 0\\
0& 0& 0& 1& 0& -1\\
0& 0& 1& 0& 1& 0\\
0& 0& 0& 0& 0& 1
\end{pmatrix},
$$
the matrix $Z$ has the positions   $z_{-\gamma_p,-\gamma_q}$ and $z_{-\gamma_q,-\gamma_p}$ equal to zero (by Lemma~\ref{prod2}), the matrix $D$ is
$$
\begin{pmatrix}
0& 0& -c_{\alpha,1}& 0& -c_{\alpha+\alpha_1,1}& 0\\
0& 0& 0& -b_{\alpha,1}& 0& -b_{\alpha+\alpha_1}\\
b_{\alpha,1}& 0& 0& 0& 0& 0\\
0& c_{\alpha,1}& 0& 0& 0& 0\\
b_{\alpha+\alpha_1,1}& 0& 0& 0& 0& 0\\
0& c_{\alpha+\alpha_1,1}& 0& 0& 0& 0
\end{pmatrix}.
$$

Now from the main condition the following matrix is zero:
{\tiny
$$
\begin{pmatrix}
0& 0& z_{\gamma_p,\gamma'}+c_{\alpha,1}& 0& c_{\alpha+\alpha_1,1}& -z_{\gamma_p,-\gamma_q}\\
0& 0& z_{-\gamma_p,\gamma'}& b_{\alpha,1}& 0& b_{\alpha+\alpha_1,1}\\
-b_{\alpha,1}& 0& z_{\gamma_q,\gamma'}& 0& 0& -z_{\gamma_q,-\gamma_q}\\
z_{-\gamma',\gamma_p}& z_{-\gamma',-\gamma_p}-c_{\alpha,1}+c_{\alpha+\alpha_1,1}& z_{-\gamma_q,\gamma'}+z_{-\gamma',\gamma_q}& z_{-\gamma',-\gamma_q}& z_{-\gamma',\gamma'}&
z_{-\gamma',-\gamma'}-z_{-\gamma_q,-\gamma_q}\\
-z_{\gamma_q,\gamma_p}-b_{\alpha,1}-b_{\alpha+\alpha_1,1}& -z_{\gamma_q,-\gamma_p}& z_{\gamma',\gamma'}-z_{\gamma_q,\gamma_q}& -z_{\gamma_q,-\gamma_q}& -z_{\gamma_q,\gamma'}&
-z_{\gamma',-\gamma_q}-z_{\gamma_q,-\gamma'}\\
0& -c_{\alpha+\alpha_1}& z_{-\gamma',\gamma'}& 0& 0& z_{-\gamma',-\gamma_q}
\end{pmatrix}.
$$
}

Easy to see, that $b_{\alpha,1}=b_{\alpha+\alpha_1,1}=c_{\alpha+\alpha_1,1}=0$.

2) In the second case we have $\gamma''=\gamma_p-\alpha_1\in \Phi^+$, $\langle \gamma_q,\alpha_1\rangle =0$. This case is similar to the previous one, and on the basis part $\gamma_p,-\gamma_p,\gamma_q,-\gamma_q,\gamma'',-\gamma''$
the following matrix is zero:
{\tiny
$$
\begin{pmatrix}
-z_{\gamma'',\gamma_p}& -z_{\gamma_p,-\gamma''}-z_{\gamma'',-\gamma_p}& -z_{\gamma'',\gamma_q}+c_{\alpha,1}+c_{\alpha+\alpha_1,1}& -z_{\gamma'',-\gamma_q}& z_{\gamma_p,\gamma_p}-z_{\gamma'',\gamma''}& -z_{\gamma'',-\gamma''}\\
 0& -z_{-\gamma_p,-\gamma''}& 0& b_{\alpha,1}& z_{-\gamma_p,\gamma_p}& 0\\
 -b_{\alpha,1}& -z_{\gamma_q,-\gamma''}& 0& 0& z_{\gamma_q,\gamma_p}-b_{\alpha+\alpha_1,1}& 0\\
 0& -z_{-\gamma+q,-\gamma''}-c_{\alpha,1}& 0& 0& z_{-\gamma_q,\gamma_p}& -c_{\alpha+\alpha_1,1}\\
 0& -z_{\gamma'',-\gamma''}&  c_{\alpha+\alpha_1,1}& 0& z_{\gamma'',\gamma_p}& 0\\
 z_{-\gamma_p,\gamma_p}& -z_{-\gamma'',-\gamma''}+z_{-\gamma_p.-\gamma_p}& z_{-\gamma_p,\gamma_q}&
  b_{\alpha+\alpha_1,1}-b_{\alpha,1}& z_{-\gamma'',\gamma_p}+z_{-\gamma_p,\gamma''}& z_{-\gamma_p,-\gamma''}
  \end{pmatrix}.
  $$
}

We again have $b_{\alpha,1}=b_{\alpha+\alpha_1,1}=c_{\alpha+\alpha_1,1}=0$.

Considering a pair of roots $\gamma_s,\gamma_t$ with the property $\gamma_t-\gamma_s=\alpha+\alpha_1$ we similarly obtain two cases and $c_{\alpha,1}=0$.

Therefore we proved that $a_{i,1}=0$ for every $i=1,\dots,l$, $b_{\alpha,1}=c_{\alpha,1}=0$ for every $\alpha\in \Phi$. Consequently, the matrix~$Z$ commutes with $x_{\alpha_1}(1)$.

Similarly from other $l-2$ conditions we obtain that the matrix $Z$ commutes with all $x_{\pm\alpha_i}(1)$, $i=1,\dots,l-1$.

Clear that it means that $Z$ commutes with all $X_{\pm \alpha_i}$, $i=1,\dots, l-1$.

Now we need to consider only to conditions: for $x_{\pm e_l}(1)$.

Similarly to the long roots the matrices $x_{\pm e_l}(1)$ are decomposed into diagonal blocks. As we know from the previous sections, these blocks are corresponded to the basis parts $\{ v_{\pm e_i\pm e_j}\mid 1\leqslant i< j< l\}\cup \{V_{h_1},\dots, V_{h_{l-2}}\} $ (both matrices are identical on it), $\{ v_{\pm e_i \pm e_l}, v_{\pm e_i}\}$, $1\leqslant i< l$, $\{ v_{e_l}, v_{-e_l}, V_{h_{l-1}}, V_{h_l}\}$.

Similarly to the situation with long roots we can get $D_{3,3}=0$, i.\,e., $a_{1,l}=\dots=a_{l-2,l}=0$, $b_{k,l}=c_{k,l}=0$ for
$\alpha_k=\pm e_i\pm e_j$, $1\leqslant i<j<l$.

Now consider the roots $\gamma_{l+1}=e_2$ and $\gamma_l=\gamma_{l+1}+\alpha_l=e_2+e_l$.
Consider the basis part $e_2-e_l,e_l-e_2,e_2+e_l, -e_2-e_l, e_2, -e_2$. For the matrices $x_{\alpha_l}(1)$ and
$x_{-\alpha_l}(1)$ this basis  is a direct summand, therefore it can be considered independently. By the construction we know that $z_{-e_2,-e_2}=z_{-e_2-e_l,-e_2-e_l}=z_{-e_2,-e_2-e_l}=z_{-e_2-e_l,-e_2}=0$.

Besides, if $Z$ commutes with all $X_{\pm \alpha_1},\dots, X_{\pm \alpha_{l-1}}$, then $Z$ commutes with $X_{e_2-e_l}$, $X_{e_l-e_2}$. Considering the basis part $\{ v_{e_2-e_l}, v_{e_l-e_2}, v_{e_2+e_l}, v_{-e_2-e_l}, V_{h_1},\dots, V_{h_l}\}$, we see that $z_{\alpha,\beta}=0$ for all pairs $\langle \alpha,\beta\rangle$, where either $\alpha=\pm (e_2-e_l)$, $\beta=\pm (e_2+e_l)$, or $\alpha=\pm (e_2+e_l)$, $\beta=\pm (e_2-e_l)$.

Now we have the condition
{\tiny
\begin{multline*}
\begin{pmatrix}
z_{e_2-e_l,e_2-e_l}& z_{e_2-e_l,e_l-e_2}& 0& 0&  z_{e_2-e_l,e_2}& z_{e_2-e_l,-e_2}\\
z_{e_l-e_2,e_2-e_l}& z_{e_l-e_2,e_l-e_2}& 0& 0&  z_{e_l-e_2,e_2}& z_{e_l-e_2,-e_2}\\
0& 0& z_{e_2+e_l,e_2+e_l}& z_{e_2+e_l,-e_2-e_l}&  z_{e_2+e_l,e_2}& z_{e_2+e_l,-e_2}\\
0& 0& z_{-e_2-e_l,e_2+e_l}& 0&  z_{-e_2-e_l,e_2}& 0\\
z_{e_2,e_2-e_l}& z_{e_2,e_l-e_2}& z_{e_2,e_2+e_l}& z_{e_2,-e_2-e_l}&  z_{e_2,e_2}& z_{e_2,-e_2}\\
z_{-e_2,e_2-e_l}& z_{-e_2,e_l-e_2}& z_{-e_2,e_2+e_l}& 0&  z_{-e_2,e_2}& 0
\end{pmatrix}
\begin{pmatrix}
1& 0& 0& 1& 0& 1\\
0& 1& 0& 0& 0& 0\\
0& 0& 1& 1& 0& -1\\
0& 0& 0& 1& 0& 0\\
0& -2& 0& 0& 1& 0\\
0& 0& 0& 2& 0& 1
\end{pmatrix}=\\ =
\begin{pmatrix}
1& 0& 0& 1& 0& 1\\
0& 1& 0& 0& 0& 0\\
0& 0& 1& 1& 0& -1\\
0& 0& 0& 1& 0& 0\\
0& -2& 0& 0& 1& 0\\
0& 0& 0& 2& 0& 1
\end{pmatrix}
\left(\begin{pmatrix}
z_{e_2-e_l,e_2-e_l}& z_{e_2-e_l,e_l-e_2}& 0& 0&  z_{e_2-e_l,e_2}& z_{e_2-e_l,-e_2}\\
z_{e_l-e_2,e_2-e_l}& z_{e_l-e_2,e_l-e_2}& 0& 0&  z_{e_l-e_2,e_2}& z_{e_l-e_2,-e_2}\\
0& 0& z_{e_2+e_l,e_2+e_l}& z_{e_2+e_l,-e_2-e_l}&  z_{e_2+e_l,e_2}& z_{e_2+e_l,-e_2}\\
0& 0& z_{-e_2-e_l,e_2+e_l}& 0&  z_{-e_2-e_l,e_2}& 0\\
z_{e_2,e_2-e_l}& z_{e_2,e_l-e_2}& z_{e_2,e_2+e_l}& z_{e_2,-e_2-e_l}&  z_{e_2,e_2}& z_{e_2,-e_2}\\
z_{-e_2,e_2-e_l}& z_{-e_2,e_l-e_2}& z_{-e_2,e_2+e_l}& 0&  z_{-e_2,e_2}& 0
\end{pmatrix}+\right.\\
+\left.\begin{pmatrix}
a_{l-1,l} & 0& 0& 0& 0& b_{l,l}\\
0& -a_{l-1,l}& 0& 0& 0& c_{l,l}&0\\
0& 0& 2a_{l,l}+a_{l-1,l}& 0& -b_{l,l}& 0& 0\\
0& 0& 0& -2a_{l,l}-a_{l-1,l}& 0&  0& -c_{l,l}\\
0& -2b_{l,l}& 2c_{l,l}& 0& a_{l,l}+a_{l-1,l}& 0\\
-2c_{l,l}&0& 0& 2b_{l,l}& 0& -a_{l,l}-a_{l-1,l}
\end{pmatrix}\right) .
\end{multline*}
}
It directly implies $a_{l-1,l}=a_{l,l}=b_{l,l}=c_{l,l}=0$.

 For the roots $e_i-e_l$, $e_i+e_l$, $e_i$, $1\leqslant i< l$,  we act in the same manner as above, when we considered the first condition, and we obtain that all coefficients $b_{\alpha,l}$ and $c_{\alpha,l}$ are zeros.

Similarly we consider the condition with $x_{-\alpha_l}(1)$.

Therefore $Z$ commutes with all $x_{\alpha}(1)$, $\alpha\in \Phi$,
Since $Z$ has zeros on the diagonal (by the construction) it is zero.
 Theorem is proved.
\end{proof}

\section{Proof of the main theorem (Theorem 1).}

\begin{lemma}\label{tor}
If $\overline t\in G_{\ad}(\Phi,R)$ is a torus element $(\Phi$ is one of the systems under consideration, $R$ is a local commutative ring\emph{)}, then there exists such a torus element $t\in G_\pi(\Phi,S)$ that a ring $S$ contains~$R$, $t$ lies in the normalizer of $G_\pi(\Phi,R)$, under factorization of $G_\pi(\Phi,S)$ by its center $t$ gives~$\overline t$.
\end{lemma}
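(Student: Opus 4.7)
The plan is to identify the adjoint torus with $\Hom(\Lambda_{\ad}, R^*)$, extend a given character to a possibly larger weight lattice $\Lambda_\pi$ by adjoining appropriate roots to $R$, and then read off $t$ as the corresponding element of $T_\pi(\Phi,S)\subset G_\pi(\Phi,S)$. Concretely, I would write $\overline t = h(\overline\chi)$ for a homomorphism $\overline\chi\in\Hom(\Lambda_{\ad},R^*)$. Since in type $B_l$ the quotient $\Lambda_\pi/\Lambda_{\ad}$ is cyclic of order $N\in\{1,2\}$, I pick a representative $\omega\in\Lambda_\pi$ whose class generates this quotient and set $a=\overline\chi(N\omega)\in R^*$. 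If $N=1$ take $S=R$; otherwise let $S=R[y]/(y^2-a)$, which is a free $R$-module of rank $2$ (so $R\hookrightarrow S$), and in which $y$ is invertible with inverse $y/a$. Define $\chi:\Lambda_\pi\to S^*$ by $\chi|_{\Lambda_{\ad}}=\overline\chi$ and $\chi(\omega)=y$; this is consistent because $\chi(N\omega)=y^N=a=\overline\chi(N\omega)$. Finally put $t=h(\chi)\in T_\pi(\Phi,S)$.

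The verification of the three required properties then proceeds as follows. First, $S\supseteq R$ by construction. Second, to check that $t$ lies in the normaliser of $G_\pi(\Phi,R)$, I apply the commutation formula \eqref{ee4}: for every $\beta\in\Phi$ and every $\xi\in R$,
\[
t\,x_\beta(\xi)\,t^{-1}=x_\beta(\chi(\beta)\xi).
\]
Since every root $\beta$ lies in $\Lambda_{\ad}$, we have $\chi(\beta)=\overline\chi(\beta)\in R^*$, so the right-hand side is an element of $E_\pi(\Phi,R)$; thus conjugation by $t$ preserves $E_\pi(\Phi,R)$. Because the maximal torus is commutative, $t$ also commutes with $T_\pi(\Phi,R)$, and since $R$ is local we have $G_\pi(\Phi,R)=E_\pi(\Phi,R)\,T_\pi(\Phi,R)$, so $t$ normalises $G_\pi(\Phi,R)$. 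Third, the centre of $G_\pi(\Phi,S)$ consists of elements $h(\chi')$ for which $\chi'$ vanishes on every root, i.e.\ factors through $\Lambda_\pi/\Lambda_{\ad}$, so the quotient $G_\pi(\Phi,S)/Z$ sends $h(\chi)$ to the element of $G_{\ad}(\Phi,S)$ encoded by the restriction $\chi|_{\Lambda_{\ad}}=\overline\chi$, which is exactly $\overline t\in G_{\ad}(\Phi,R)$.

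The only nontrivial point is the construction of $S$; after that everything is formal from \eqref{ee4} and the decomposition $G_\pi(\Phi,R)=E_\pi(\Phi,R)T_\pi(\Phi,R)$. The main obstacle is therefore merely to ensure that the quantity $a=\overline\chi(N\omega)$ to which we adjoin an $N$th root is actually a unit (so that $t$ is a well-defined torus element and invertible), and that $R$ embeds into $S$ as a direct summand; both hold automatically here because $\overline\chi$ takes values in $R^*$ and $y^N-a$ is monic. In particular the same argument works uniformly for the other root systems treated in the preceding papers, the index $N=|\Lambda_{sc}/\Lambda_{\ad}|$ being the only datum that changes.
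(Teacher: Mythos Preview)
Your argument is correct and rests on the same core idea as the paper's proof: adjoin square roots to $R$ so that the character $\overline\chi:\Lambda_{\ad}\to R^*$ extends to $\Lambda_\pi$, and then take $t=h(\chi)$. The difference is one of packaging. The paper reduces to the generating torus elements $\chi_{\alpha_1}(r_1)$ and $\chi_{\alpha_l}(r_l)$, adjoins $s_i$ with $s_i^2=r_i$, and writes the lifts explicitly as products of coroots, e.g.\ $t_1=h_{\alpha_1}(s_1^2)\cdots h_{\alpha_{l-1}}(s_1^2)h_{\alpha_l}(s_1)$, verifying by direct computation that conjugation by $t_i$ reproduces the action of $\overline t_i$ on each $x_\alpha(s)$. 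You instead work with an arbitrary $\overline t$ at once, identify the obstruction to lifting as the cyclic quotient $\Lambda_\pi/\Lambda_{\ad}$, and adjoin a single square root $y$ of $a=\overline\chi(2\omega)$. Your route is more uniform (one square root suffices rather than one per generator, and the same proof works verbatim for other types by changing $N$), while the paper's explicit formulas have the advantage of making the lift visible inside the standard coroot generators. Both arrive at the normaliser property via \eqref{ee4} and the decomposition $G_\pi(\Phi,R)=E_\pi(\Phi,R)T_\pi(\Phi,R)$.
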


\begin{proof}
Clear that it is sufficient to prove the lemma statement for basic elements of the torus $T_{\ad}(\Phi,R)$. Since all roots of the same length are conjugate, we can consider only two torus elements: $\overline t_1=\chi_{\alpha_1}(r_1)$ and $\overline t_l=\chi_{\alpha_l}(r_l)$.
Torus elements under consideration acts on the elementary subgroup as follows:
$\overline t_i x_{\alpha}(s) \overline t_i^{-1}=x_{\alpha}(r_i^{k_i}s)$, where $\alpha=k_i\alpha_i+\beta$, $\alpha_i$ does not entry in the decomposition $\beta$ in the sum of simple roots.

Clear that we need only to construct such an extension  $S$ of~$R$  and torus elements of the group $G_\pi(\Phi,S)$, $t_1$ and $t_l$, that $t_i x_{\alpha}(s) t_i^{-1}= \overline t_i x_\alpha(s) \overline t_i^{-1}$ for all $\alpha\in \Phi$, $s\in R$.

Clear that if the constructed torus elements act correctly on simple roots, it automatically acts correctly on all roots.

Consider a ring~$S$, that obtained from~$R$ by adding a root of the second power from~$r_1$ and $r_l$. Denote these roots by~$s_1$ and $s_l$ respectively. Then the obtained elements are
$$
t_1=h_{\alpha_1}(s_1^2)h_{\alpha_2}(s_1^2)\dots h_{\alpha_{l-1}}(s_1^2)h_{\alpha_l}(s_1)
$$
and
$$
t_l=h_{\alpha_1}(s_l^2)h_{\alpha_2}(s_l^4)\dots h_{\alpha_{l-1}}(s_l^{2(l-1)})h_{\alpha_l}(s_l^l),
$$
it is checked by direct calculations.

Therefore, all obtained extensions of~$R$ are found, elements $ t$ are constructed.
\end{proof}

Let us prove now the main theorem (Theorem~\ref{main}).

\begin{proof}
The case when the Chevalley group is elementary adjoint,
evidently follows from Theorems \ref{old} and~\ref{norm}.

Suppose now that we have any other elementary Chevalley group
$E_\pi(\Phi,R)$ and some its automorphism~$\varphi$. Its quotient by the center is isomorphic to the elementary adjoint group $E_{\ad}(\Phi,R)$,
so we have an automorphism $\overline \varphi$ of the group
$E_{\ad}(\Phi,R)$. Such an automorphism is decomposed to
$$
\overline \varphi = \overline \rho \circ
\varphi_{\overline g},
$$
where $\overline \rho$ is a ring automorphism, $\varphi_{\overline g}$ is a conjugation by $\overline g\in G_{\ad}(\Phi,R)$. Note that an automorphism $\overline \rho$  can be easily changed
to a ring automorphism $\rho$  of the group
$E_\pi(\Phi,R)$ such that $\rho$  on equivalence classes of the group $E_{\pi}(\Phi,R)$ by its center acts in the same way as $\overline \rho$.

Now consider an automorphism $\varphi_{\overline g}$. Note that
$\overline g= \overline t \overline e$, where  $\overline t\in
T_{\ad}(\Phi, R)$, $\overline e\in E_{\ad}(\Phi, R)$. For an element
$\overline e$ we can find such $e\in E_{\pi}(\Phi,R)$ that the image
$e$ under factorization by its center is equal to~$\overline e$. An element
 $\overline t$ has its inverse image $t\in T_{\pi}(\Phi, S)$, where $S$ is a ring obtained from~$R$ by adding some scalars (see Lemma~\ref{tor}). Also $t$ normalizes $E_\pi(\Phi,R)$.
Consider now $g=te\in G_{\pi}(\Phi,S)$. Clear that under factorization the group $E_\pi (\Phi, R)$ by its center an automorphism
$\varphi_g$ gives us an automorphism~$\varphi_{\overline g}$.

Now consider an automorphism
$$
\psi=\varphi_{g^{-1}}\circ \rho^{-1}\circ \varphi.
$$
It is an automorphism of $E_\pi(\Phi,R)$, under factorization by the center it gives an identical automorphism of the quotient group. Therefore $\psi$ is a central automorphism.

But since an elementary Chevalley group in the case under consideration is its proper commutant, we have that every central automorphism is identical.

Consequently we proved the theorem for all elementary Chevalley groups of types under consideration.
Namely, we proved that every automorphism of $E_{\pi}(\Phi,R)$ is a composition of ring and inner (but not strictly inner) automorphisms.

Now suppose that we have a Chevalley group $G_\pi(\Phi,R)$ and its automorphism~$\varphi$.
Since an elementary group
$E_\pi(\Phi,R)$ is characteristic (commutant) in
$G_\pi(\Phi,R)$, then $\varphi$ is simultaneously an automorphism of the elementary subgroup. On the elementary subgroup it is the composition $\rho\circ \varphi_g$, $g\in
G_{\pi}(\Phi,S)$, and $g=te$, where $e\in E_\pi(\Phi,R)$, $t\in T_\pi (\Phi,S)$. The first two automorphisms are clearly extended to the automorphisms of the whole group $G_{\pi}(\Phi,R)$, and the second one is an automorphism of this group, since the torus elements commutes. Then the composition $\psi=\varphi_{g^{-1}}\circ
\rho^{-1}\circ \varphi$ is an automorphism of
$G_\pi(\Phi,R)$, that acts identically on the elementary subgroup.
Since $G_\pi(\Phi,R)=T_\pi(\Phi,R)\cdot E_{\pi}(\Phi,R)$, we need only to understand how  $\psi$ acts on torus elements. The element
$t^{-1}\varphi(t)\in G_\pi(\Phi,R)$ lies in the center of
$E_\pi(\Phi,)$, therefore, in the center of $G_\pi(\Phi,R)$.
Consequently the automorphism $\psi$ is central.

Therefore, Theorem~\ref{main} is proved for all Chevalley groups under consideration.
\end{proof}

\end{document}